\def\e#1\e{\begin{equation}#1\end{equation}}
\def\ea#1\ea{\begin{align}#1\end{align}}
\def\eq#1{{\rm(\ref{#1})}}
\newtheorem{thm}{Theorem}[section]
\newtheorem{lem}[thm]{Lemma}
\newtheorem{prop}[thm]{Proposition}
\newtheorem{cor}[thm]{Corollary}
\newenvironment{dfn}{\medskip\refstepcounter{thm}
\noindent{\bf Definition \thesection.\arabic{thm}\ }}{\medskip}
\newenvironment{ex}{\medskip\refstepcounter{thm}
\noindent{\bf Example \thesection.\arabic{thm}\ }}{\medskip}
\newenvironment{proof}[1][,]{\smallskip\ifcat,#1
\noindent{\it Proof.\ }\else\noindent{\it Proof of #1.\ }\fi}
{\relax\unskip\nobreak ~\hfill$\square$\bigskip}
\def\narrow{\par\addtolength{\leftskip}{20pt}\addtolength{\rightskip}{5pt}}
\def\oldmargins{\par\setlength{\leftskip}{0pt}\setlength{\rightskip}{0pt}}
\newcounter{quest}[section]
\newcounter{qa}[quest]
\newcounter{qi}[quest]
\def\inext{\par\ifnum\value{qi}=0 \narrow\fi \medskip\addtocounter{qi}{1}
\noindent\hbox to 0pt{\hss\bf(\roman{qi})\hskip .3em}}
\def\anext{\par\ifnum\value{qa}=0 \narrow\fi \medskip\addtocounter{qa}{1}
\noindent\hbox to 0pt{\hss\bf(\alph{qa})\hskip .3em}}
\numberwithin{equation}{section}
\newcommand{\be}{\begin{equation}}
\newcommand{\ee}{\end{equation}}
\newcommand{\bes}{\begin{equation*}}
\newcommand{\ees}{\end{equation*}}
\newcommand{\bea}{\begin{eqnarray}}
\newcommand{\eea}{\end{eqnarray}}
\def\w{\wedge}
\def\pa{\partial}
\def\bpa{{\bar \partial}}
\def\pas{\partial^*}
\def\bpas{{\bar \partial^*}}
\def\dpdl{d+\dl}
\def\dadl{d\,\cap\, \dl}
\def\dcap{d\, \cap\, \dl }
\def\hB{{\widehat B}}
\def\ds{d^*}
\def\dl{d^\Lambda}
\def\dls{d^\Lambda{}^*}
\def\dc{d^c}
\def\dcs{d^c{}^*}
\def\ss{*_s}
\def\CA{\Omega}
\def\CB{\mathcal{P}}
\def\CC{{\tilde \Omega}}
\def\tCB{{\tilde{\mathcal{P}}}} 
\def\CH{\mathcal{H}}
\def\CJ{\mathcal{J}}
\def\CL{\mathcal{L}}
\def\mC{\mathcal{C}}
\def\La{\Lambda}
\def\a{\alpha}
\def\b{\beta}
\def\la{\lambda}
\def\om{\omega}
\def\th{\theta}
\def\vp{\varphi}
\def\bth{{\bar \theta}}
\def\rx{{\rho_X}}
\def\im{{\rm im~}}
\def\ea{e_1}
\def\eb{e_2}
\def\ec{e_3}
\def\ed{e_4}
\def\xa{x_1}
\def\xb{x_2}
\def\xc{x_3}
\def\xd{x_4}
\def\Ca{C^{(1)}_l}
\def\Cb{C^{(2)}_l}
\def\Cc{C^{(12)}_{l+1}}
\def\ra{\rho^{(1)}_{2n-l}}
\def\rb{\rho^{(2)}_{2n-l}}
\def\rc{\rho^{(12)}_{2n-l-1}}
\begin{document}

\title{\bf{Cohomology and Hodge Theory on \\
 Symplectic Manifolds: I}}

\author{Li-Sheng Tseng and Shing-Tung Yau \\
\\
}

\date{September 29, 2009}

\maketitle

\begin{abstract} We introduce new finite-dimensional cohomologies on symplectic manifolds.  Each exhibits Lefschetz decomposition and contains a unique harmonic representative within each class.   
Associated with each cohomology is a primitive cohomology defined purely on the space of primitive forms.  We identify the dual currents of lagrangians and more generally coisotropic submanifolds with elements of a primitive cohomology, which dualizes to a homology on coisotropic chains.

\

\end{abstract}



\tableofcontents

\setcounter{equation}{0}
\setcounter{footnote}{0}


\

\section{Introduction}

The importance of Hodge theory in Riemannian and complex geometry is without question.  But in the symplectic setting, although a notion of symplectic Hodge theory was discussed in the late 1940s by Ehresmann and Libermann \cite{EH,Libermann} and re-introduced by Brylinski \cite{Brylinski} about twenty years ago, its usefulness has been rather limited.  To write down a symplectic adjoint, one makes use of the symplectic star operator $\ss$, defined analogously to the Hodge star operator but with respect to a symplectic form $\om$ instead of a metric.  Specifically, on a symplectic manifold $(M,\om)$ with dimension $2n$, the symplectic star acts on a differential $k$-form by
\begin{align*}
A \w \ss A' &=  (\om^{-1})^k(A, A') \,d{\rm vol} \notag\\
& = \frac{1}{k!}(\om^{-1})^{i_1 j_1}(\om^{-1})^{i_2 j_2}\!\ldots (\om^{-1})^{i_k j_k}\, A_{i_1 i_2 \ldots i_k}\,A'_{j_1 j_2 \ldots j_k}\,\,  \frac{\om^n}{n!}
\end{align*}   
with repeated indices summed over.  The adjoint of the standard exterior derivative takes the form
$$\dl = (-1)^{k+1} \ss d\, \ss~,$$ 
acting on a $k$-form.
A differential form is then called ``symplectic harmonic" if it is both $d$-closed and $\dl$-closed.  As for the existence of such forms, Mathieu \cite{Mathieu} proved that every de Rham cohomology $H_d^*(M)$ class contains a symplectic harmonic form if and only if the symplectic manifold satisfies the strong Lefschetz property; that is, the map
$$H_d^{k}(M) \rightarrow H_d^{2n-k}(M)~,  ~~~A\to [\om]^{n-k} \w \,A~,$$
 is an isomorphism for all $k \leq n\,$. 

As it stands, the set of differential forms that are both $d$- and $\dl$-closed lacks certain intrinsic properties that we typically associate with harmonic forms.   Concerning existence, one would like that a symplectic harmonic form exists in every cohomology class for any symplectic manifold.  But Mathieu's theorem tells us that the existence of a symplectic harmonic form in all classes of the de Rham cohomology requires that the symplectic manifold satisfies the strong Lefschetz property.  Unfortunately, many known non-K\"ahler symplectic manifolds do not satisfy strong Lefschetz.   One would also like the uniqueness property of harmonic forms in each cohomology class to hold.   But consider for instance one-forms that are $d$-exact.  They are trivially $d$-closed, and it can be easily shown that they are always $\dl$-closed too.   Uniqueness of $d$- and $\dl$-closed forms within the de Rham cohomology class simply does not occur.  These two issues, of existence and uniqueness of $d$- and $\dl$-closed forms, indicate that perhaps the de Rham cohomology is not the appropriate cohomology to consider symplectic Hodge theory.  But if not de Rham cohomology, what other cohomologies are there on symplectic manifolds?  

In this paper, we introduce and analyze new cohomologies for compact symplectic manifolds.  In our search for new cohomologies, a simple approach is to start with the requirement of $d$- and $\dl$-closed and try to attain uniqueness by modding out some additional exact-type forms.  Having in mind the properties $(d)^2= (\dl)^2=0$ and the anti-commutivity $d\dl = - \dl d$, we are led to consider the following cohomology of smooth differential forms $\CA^*(M)$ on a symplectic manifold
\bes
H^k_{d+\dl}(M) = \frac{\ker (d + \dl) \cap \CA^k(M)}{{\rm im} \ d\dl \cap \CA^k(M)}\,,
\ees
noting that $\ker d \cap \ker \dl = \ker(d+ \dl)$ in $\CA^k(M)\,$.   Conceptually, in writing down such a cohomology, we have left behind the adjoint origin of $\dl$ and instead are treating $\dl$ as an independent differential operator.  Hence, our choice of notation, $\dl$, differs from the more commonly used $\delta$ symbol, denoting adjoint, in the literature.  By elliptic theory arguments, we shall show that for $M$ compact, $H^k_{d+\dl}(M)$ is indeed finite dimensional.  And since $H^k_{d+\dl}(M)$ is by construction invariant under any symplectmorphisms of a symplectic manifold, it is a good symplectic cohomology encoding global invariants.

As for the notion of harmonic forms, we will define it in the standard Riemannian fashion, utilizing the Hodge star operator, which requires a metric.  On any symplectic manifold $(M, \om)$, there always exists a compatible triple, $(\om, J, g)$, of symplectic form, almost complex structure, and Riemannian metric.  And it is with respect to such a compatible metric $g$ that we shall define the Hodge star operator $*$.  We will require the symplectic harmonic form for this cohomology to be not only $d$- and $\dl$-closed, but additionally also $(d\dl)^*=(-1)^{k+1}\!*(d\dl)\,*$ closed.  A unique harmonic form can then be shown to be present in every cohomology class of $H^k_{d+\dl}(M)$.

Appealingly, the cohomology $H^*_{d+\dl}(M)$ has a number of interesting properties.  As we will show, it commutes with the Lefschetz's decomposition of forms, and hence the Lefschetz property with respect to $H^*_{d+\dl}(M)$, instead of the de Rham cohomology $H_d^*(M)$, holds true on all symplectic manifolds.  It turns out that if the symplectic manifold satisfies the strong Lefschetz property with respect to $H^k_d(M)$, which is equivalent to the presence of the $d\dl$-lemma \cite{Merkulov, Guillemin}, then  $H^k_{d+\dl}(M)$ becomes isomorphic to de Rham $H_d^k(M)$.  Essentially, $H^*_{d+\dl}(M)$ contains the additional data of the symplectic form $\om$ (within the $\dl$ operator).  It is therefore a more refined cohomology then de Rham on a symplectic manifold, with possible dependence only on $\om$ and always Lefschetz decomposable.

From $H^*_{d+\dl}(M)$, we are led to consider other new finite-dimensional cohomologies.   For the de Rham cohomology, there is a natural pairing between the cohomologies $H_d^k(M)$ and $H_d^{2n-k}(M)$ via the wedge product.  For $H^*_{d+\dl}(M)$, there is also a natural pairing via the wedge product.  However, the pairing is not with itself but with the cohomology
\bes
H^k_{d\dl}(M) = \frac{\ker d\dl \cap \CA^k(M)}{({\rm im} \ d + {\rm im} \ \dl) \cap \CA^k(M)}~.
\ees
We will show that $H^{k}_{d+\dl}(M)$ and $H^{2n-k}_{d\dl}(M)$ forms a well-defined pairing that is non-degenerate.  As may be expected, we will also find that $H^*_{d\dl}(M)$ exhibits many of the same properties found in its paired cohomology $H^{*}_{d\dl}(M)$, including being Lefschetz decomposable.  These two cohomologies are indeed isomorphic to one another.


In a separate direction, we can use the fact that the three new cohomologies we have introduced are Lefschetz decomposable to consider their restriction to the subspace of smooth primitive forms, $\CB^*(M)$.  Such would be analogous on a K\"ahler manifold to the primitive Dolbeault cohomology.   As an example, the associated primitive cohomology of $H^*_{d+\dl}(M)$ can be written for $k\leq n$ as
\bes
PH^k_{\dpdl}(M) = \frac{\ker d \cap \CB^k(M)}{ d\dl \CB^k(M)}~,
\ees
which acts purely within the space of primitive forms $\CB^*(M)$.  These primitive cohomologies should be considered as more fundamental as for instance $PH^k_{\dpdl}(M)$ underlies $H^k_{d+\dl}(M)$ by Lefschetz decompositon, and indeed also $H^k_{d\dl}(M)$, by isomorphism with $H^{2n-k}_{d+\dl}(M)$.  

The appearance of primitive cohomologies is also interesting from a different perspective.  As we shall see, the currents of coisotropic submanifolds turns out to be exactly primitive.  That this is so is perhaps not that surprising as the coisotropic property is defined with reference to a symplectic form $\om$, just like the condition of being primitive.  Motivated by de Rham's theorem relating $H_d(M)$ cohomology with the homology of chains, the existence of primitive cohomologies is at once suggestive of the following homology
\bes
PH_l(M)= \frac{\ker \partial \cap  \mC_l(M)}{\partial\, \mC_{l+1}(M)}~,
\ees
where ${\mC}_l$ with $n \leq l < 2n\,$ consists of the subspace of $l$-chains that are coisotropic (and if with boundaries, the boundaries are also  coisotropic).  We will associate such a homology with a finite primitive cohomolgy that is a generalization of $PH_{\dpdl}(M)$.   A list of the new cohomologies we introduce in this paper can be found in Table \ref{tabcoh}.

\begin{table}[t]
{\renewcommand{\arraystretch}{2.2} 
\renewcommand{\tabcolsep}{0.3cm}
\begin{tabular}{l l |  l }
 & Cohomology (Primitive Cohomology)& \qquad\qquad\qquad Laplacian \\
\hline
$1.$&  $H^k_{d+\dl}(M) = \dfrac{\ker (d + \dl) \cap \CA^k(M)}{{\rm im} \ d\dl \cap \CA^k(M)}$  &  $\Delta_{d+\dl}= d\dl (d\dl)^* + \la( \ds d + \dls \dl) $\\
& $PH^k_{\dpdl}(M) = \dfrac{\ker d \cap \CB^k(M)}{d\dl \CB^k(M)}$ & $\Delta^p_{d+\dl}= d\dl (d\dl)^* + \la\, \ds d $ \\ 
\hline
$2.$&  $H^k_{d\dl}(M)= \dfrac{\ker d\dl \cap \CA^k}{(\im d + \im \dl)\cap \CA^k}$ & $\Delta_{d\dl}= (d\dl)^* d\dl + \la (d\ds + \dl\dls)$  \\
& $PH^k_{d\dl}(M) = \dfrac{\ker d\dl \cap \CB^k(M)}{ (d + L\, H^{-1} \dl) \CB^{k-1} + \dl \CB^{k+1}}$ & $\Delta^p_{d\dl}=(d\dl)^* d\dl + \la\, \dl \dls$ \\  
\end{tabular}}
\
\caption{Two new cohomologies, the associated primitive cohomologies, and their Laplacians (with $\la > 0$).  An additional primitive cohomology $PH_d^k(M)$ is introduced in Section \ref{homology}
\label{tabcoh}}
 \
 
\end{table}

The structure of this paper is as follows.  In Section 2, we begin by highlighting some special structures of differential forms on symplectic manifolds. This section will provide the foundation on which we build our analysis of symplectic cohomology.  In Section 3, we describe the de Rham and $H_{\dl}(M)$ cohomologies that have been studied on symplectic manifolds, and introduce the new cohomologies $H_{d+\dl}(M)$ and $H_{d\dl}(M)
$, and their associated primitive cohomologies.  We demonstrate the properties of these new cohomologies and also compare the various cohomologies in the context of the four-dimensional Kodaira-Thurston manifold.  In Section 4, we consider the identifying properties of dual currents of submanifolds in $M$.  We then show how the currents of coisotropic chains fit nicely within a primitive cohomology that can be considered dual to a homology of coisotropic chains.   We conclude in Section 5 by briefly comparing the cohomologies that we have constructed on symplectic manifolds with those previously studied in complex geometry.   

  Further discussion of the structures of primitive cohomologies and their applications will be given separately in a follow-up paper \cite{TY}.

\medskip

\noindent{\it Acknowledgements.~} {We would like to thank K.-W. Chan, J.-X. Fu, N.-C. Leung, T.-J. Li,  B. Lian, A. Subotic, C. Taubes, A. Todorov, A. Tomasiello, and V. Tosatti for helpful discussions.  We are also grateful to V. Guillemin for generously sharing his insights on this topic with us.  This work is supported in part by NSF grants 0714648 and 0804454.}

\


\section{Preliminaries}

In this section, we review and point out certain special structures of differential forms on symplectic manifolds.  These will provide the background for understanding the symplectic cohomologies discussed in the following sections.  For those materials covered here that are standard and well known, we shall be brief and refer the to the references \cite{Weil, Wells, LM, Brylinski, Yan, Guillemin, Huy, Caval}) for details.

Let $(M,\om)$ be a compact symplectic manifold of dimension $d=2n$.  Let $\CA^k(M,\mathbb{R})$ denote the space of smooth $k$-forms on $M$.  Using the symplectic form $\om = \sum \frac{1}{2}\, \om_{ij}\, dx^i \w dx^j$ (with summation over the indices $i,j$ implied), the Lefschetz operator $L: \CA^k(M) \to \CA^{k+2}(M)$ and the dual Lefschetz operator $\La: \CA^k(M) \to \CA^{k-2}(M)$ are defined acting on a $k$-form $A_k\in \CA^k(M)$ by
\begin{align*}
L &: ~~ L (A_k) =\om \w A_k ~,\\
\La &:~~ \La (A_k) =\frac{1}{2}(\om^{-1})^{ij}\,i_{\pa_{x^i}} i_{\pa_{x^j}} A_k~, 
\end{align*}
where $\w$ and $i$ denote the wedge and interior product, respectively, and $(\om^{-1})^{ij}$ is the inverse matrix of $\om_{ij}$.  In local Darboux coordinates $(p_1,\ldots, p_n, q_1,\ldots, q_n)$ where $\om= \sum dp_j \w dq_j$, we have
\begin{align*}
\La \,A_k = \sum i(\frac{\pa}{\pa q_j}) i(\frac{\pa}{\pa p_j}) ~ A_k~.
\end{align*}
$L$ and $\La$ together with the degree counting operator
\begin{equation*}
H =  \sum_k (n-k)\, \Pi^k~,
\end{equation*}
where $\Pi^k:\CA^*(M) \to \CA^k(M)$ projects onto forms of degree $k$, give a representation of the $sl(2)$ algebra acting on $\CA^*(M)$,
\be\label{Lalg}
[\La, L] = H\,, \qquad [H, \La]  = 2 \La\,,\qquad [H,L] = -2 L ~ ,
\ee
with the standard commutator definition $[a,b]:= a\, b -b \,a\,.$ 

Importantly, the presence of this $sl(2)$ representation allows for a ``Lefschetz" decomposition of forms in terms of irreducible finite-dimensional $sl(2)$ modules.  The highest weight states of these irreducible $sl(2)$ modules are the space of primitive forms, which we denote by $\CB^*(M)$.  

\begin{dfn}\label{pdef}
A differential $k$-form $B_k$ with $k\leq n$ is called {\it primitive}, i.e. $B_k \in \CB^k(M)$, if it satisfies the two equivalent conditions\,:~(i) $\La\, B_k = 0\,$; ~(ii) $L^{n-k+1}B_k=0$ .
\end{dfn}

Given any $k$-form, there is a unique Lefschetz decomposition into primitive forms \cite{Weil}.   Explicitly, we shall write 
\be\label{Lefdec}
A_k = \sum_{r\geq{\rm max}(k-n,0)}\frac{1}{r!}\, L^r B_{k-2r}~,
\ee
where each $B_{k-2r}$ can be written in terms of $A_k$ as 
\begin{align}\label{Bdef}
B_{k-2r} &= \Phi_{(k,k-2r)}(L,\La)\, A_k \notag\\
&\equiv \left(\sum_{s=0} \,a_{r,s}\,\frac{1}{s!}\,L^s\La^{r+s}\right) \, A_k~,
\end{align}
where the operator $\Phi_{(k,k-2r)}(L, \La)$ is a linear combination of $L$ and $\La$ with the rational coefficients $a_{r,s}$'s dependent only on $(d, k, r)$.  We emphasize that the Lefschetz decomposed forms $\{B_k, B_{k-2},\ldots\}$ are uniquely determined given a differential form $A_k$.  We give a simple example.

\begin{ex}For a four-form $A_4$ in dimension $d=2n =6\,$, the Lefschetz decomposed form is written as 
\bes\label{decompex}
A_4 = L \,B_2 + \frac{1}{2}\, L^2  \,B_0~.
\ees
Applying $\La$ and $\La^2$ to $A_4$ as written above and using the $sl(2)$ algebra in \eqref{Lalg}, the primitive forms $\{B_2, B_0\}$ are expressed in terms of $A_4$ as
\begin{align*}
B_2 &= \Phi_{4,2}(A_4)= (\La - \frac{1}{3}\, L \La^2 ) A_4 ~,\\
B_0 &=  \Phi_{4,0}(A_4)= \frac{1}{6}\,\La^2 A_4~.
\end{align*}
\end{ex}

\subsection{Three simple differential operators}

Three differential operators have a prominent role in this paper.  The first is the standard exterior derivative $d: \CA^k(M) \to \CA^{k+1}(M)$.  It interacts with the $sl(2)$ representation via the following commutation relations
\be\label{drels}
 [d,L] =0\, , \qquad [d,\La]= \dl\, ,\qquad~\, [d,H]=d ~.
\ee
The first and third relations follow trivially from $\om$ being symplectic and the definition of $H$, respectively.  We take the second relation to define the second differential operator\footnote{The $\dl$ operator we define here is identical to the $\delta$ operator in the literature, though some authors' definition of the $\La$ operator differs from ours by a sign (see for example, \cite{Brylinski, Caval}).  Our convention is that $\La\, L\,(f) = n\, f\,$, for $f$ a function.}  
\be\label{dLambda}
\dl := d \,\La - \La \, d~.
\ee
Notice in particular that $\dl: \CA^k(M) \to \CA^{k-1}(M)\,$, decreasing the degree of forms by one.   

Though not our emphasis, it is useful to keep in mind the original adjoint construction of $\dl$ \cite{EH, Libermann,Brylinski}.   Recall the symplectic star operator, $\ss: \CA^k(M) \to \CA^{2n-k}(M)$ defined by
\begin{align}\label{stardef}
A \w \ss A' &=  (\om^{-1})^k(A, A') \,d{\rm vol} \notag\\
& = \frac{1}{k!}(\om^{-1})^{i_1 j_1}(\om^{-1})^{i_2 j_2}\!\ldots (\om^{-1})^{i_k j_k}\, A_{i_1 i_2 \ldots i_k}\,A'_{j_1 j_2 \ldots j_k}\,\,  \frac{\om^n}{n!}~,
\end{align}
for any two $k$-forms $A, A' \in \CA^k(M)\,$.   This definition is in direct analogy with the Riemannian Hodge star operator where here $\om^{-1}$ has replaced $g^{-1}$.  Notice, however, that $\ss$ as defined in \eqref{stardef} does not give a positive-definite local inner product, as $A\w \ss A'$ is $k$-symmetric.  Thus, for instance, $A_k \w \ss A_k =0 $ for $k$ odd.  The symplectic star's action on a differential form can be explicitly written in terms of its action on each Lefschetz decomposed component $\frac{1}{r!} L^r B_s$ (as in \eqref{Lefdec} with $s=k-2r$).  It can be straightforwardly checked that \cite{Weil, Guillemin} 
\be\label{ssact}
\ss\; \frac{1}{s!}\, L^r B_s = (-1)^{\frac{s(s+1)}{2}} \frac{1}{(n-s-r)!}\,L^{n-s-r}\, B_s ~,
\ee
for $B_s \in \CB^s(M)$ and $r\leq n-s$.  This implies in particular
\bes
\ss \ss =1~.
\ees
 
The symplectic star operator permits us to consider $\La$ and $\dl$ as the symplectic adjoints of $L$ and $d$, respectively.  Specifically, we have the relations \cite{Yan}
\bes
\La =  \ss\, L\, \ss~
\ees
and \cite{Brylinski}
\be\label{dlsymp} 
\dl  = (-1)^{k+1} \ss d \ss ~,
\ee 
acting on $A_k\in \CA^k\,.$   Thus we easily find that $\dl$ squares to zero, that is, 
\bes\label{ddlanti}
\dl \dl = - \ss d^2 \ss = 0~.
\ees 
And by taking the symplectic adjoint of \eqref{drels}, we obtain the commutation relation of $\dl$ with the $sl(2)$ representation
\be\label{dlrels} 
[\dl, L] =d\, , \qquad [\dl, \La]=0\, ,  \qquad [\dl, H] = -\dl~.
\ee

The third differential operator of interest is the composition of the first two differential operators, $d\dl: \CA^k\to \CA^k\,.$  Explicitly,
\bes
d\dl = - d\, \La \, d = - \dl d~,
\ees
which implies in particular that $d$ and $\dl$ anticommute.  Besides not changing the degree of forms, $d\dl$ has a noteworthy property with respect to the $sl(2)$ operators.  Using  equations \eqref{drels} and \eqref{dlrels}, and also the commutation property $[ab, c]= a [b,c] + [a,c]b$, it is easily seen that $d\dl$ commutes with all three $sl(2)$ generators
\be\label{ddl}
[d\dl, L]= [d\dl, \La] = [d\dl, H] = 0~.
\ee
This implies in particular when it acts on primitive forms, $d\dl: \CB^k(M) \to \CB^k(M)\,$; that is, $d\dl$ preserves primitivity of forms.

To summarize, we write all the commutation relations together.
\begin{lem}\label{dddrels}The differential operators $(d, \dl, d\dl)$ satisfy the following commutation relations with respect to the $sl(2)$ representation $(L,\La, H)\,$:
\begin{alignat*}{2}
\tag{\ref{drels}}[d,L]=0\,,  ~~\qquad &[d,\La]=\dl\, ,\qquad &~~&[d,H]=d\, ,  \\
\tag{\ref{dlrels}} [\dl, L]=d\,, ~~\qquad &[\dl,\La]=0\, , \qquad &~~&[\dl,H]= -\dl\, , 
 \\
\tag{\ref{ddl}}[d\dl, L]=0\,, ~~\qquad &[d\dl,\La]=0\, , \qquad &~~&[d\dl,H] = 0 \, .
\end{alignat*}
\end{lem}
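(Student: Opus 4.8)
My plan is to establish the three blocks of relations in the order listed, bootstrapping each block from the one before it together with a minimal amount of extra input. For the first block \eqref{drels} I would argue directly from the definitions: $[d,L]=0$ follows from $d\om=0$, since for a $k$-form $A$ one has $d(\om\w A)=d\om\w A+\om\w dA=\om\w dA$, so that $dL=Ld$; the middle relation $[d,\La]=\dl$ is simply the defining equation \eqref{dLambda} of $\dl$ and needs nothing; and $[d,H]=d$ I would verify on a homogeneous form $A_k$, where $HA_k=(n-k)A_k$ while $H(dA_k)=(n-k-1)dA_k$, so that $(dH-Hd)A_k=dA_k$.

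For the second block \eqref{dlrels} I would extract as much as possible formally from $\dl=[d,\La]$ by the Jacobi identity in its derivation form $[[a,b],c]=[a,[b,c]]-[b,[a,c]]$, which is available here because $L,\La,H$ are even operators. Taking $[[d,\La],L]$ and substituting $[\La,L]=H$ and $[d,L]=0$ gives $[\dl,L]=[d,H]=d$, while taking $[[d,\La],H]$ and substituting $[\La,H]=-2\La$ and $[d,H]=d$ gives $[\dl,H]=-2\dl+\dl=-\dl$. The remaining relation $[\dl,\La]=0$ is the one place where this formal route stalls: feeding two copies of $\La$ into the Jacobi identity only reproduces the tautology $[\dl,\La]=[\dl,\La]$. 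For this relation I would instead invoke the symplectic-adjoint descriptions $\La=\ss L\,\ss$ and $\dl=(-1)^{k+1}\ss d\,\ss$ together with $\ss\ss=1$, and conjugate $[d,L]=0$ by $\ss$. I expect the degree-dependent sign $(-1)^{k+1}$ to be the only delicate bookkeeping in the whole argument, which is fitting: $[\dl,\La]=0$ is precisely the relation carrying the genuine symplectic content, namely the closedness of $\om$, rather than pure $sl(2)$ formalism, and so it cannot be produced algebraically from $\dl=[d,\La]$ alone.

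Finally, for the third block \eqref{ddl} I would apply the Leibniz rule $[ab,c]=a[b,c]+[a,c]b$ to $d\dl$ against each generator and substitute the relations already proved, using $d^2=\dl^2=0$. Explicitly, $[d\dl,L]=d[\dl,L]+[d,L]\dl=d\,d+0=0$, $[d\dl,\La]=d[\dl,\La]+[d,\La]\dl=0+\dl\,\dl=0$, and $[d\dl,H]=d[\dl,H]+[d,H]\dl=-d\dl+d\dl=0$. These drop out immediately once the first two blocks are in hand, so no further obstacle arises here.
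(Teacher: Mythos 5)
Your proof is correct, and its middle block takes a genuinely different route from the paper's. The paper obtains all three relations in \eq{dlrels} at once by conjugating \eq{drels} with the symplectic star: since $\La=\ss L\,\ss$ and $\dl=(-1)^{k+1}\ss d\,\ss$ with $\ss\ss=1$, each relation for $d$ transforms into the corresponding relation for $\dl$; in particular $[\dl,\La]=0$ is literally the $\ss$-conjugate of $[d,L]=0$, which confirms your observation that this is exactly where the symplectic input $d\om=0$ enters. You instead extract $[\dl,L]=d$ and $[\dl,H]=-\dl$ purely formally from the Jacobi identity applied to $\dl=[d,\La]$ together with the $sl(2)$ relations and the first block, and fall back on $\ss$-conjugation only for $[\dl,\La]=0$. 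What this buys is a clean separation between what is an algebraic consequence of the definition $\dl:=[d,\La]$ and what carries genuine geometric content; the paper's uniform conjugation is shorter but hides that distinction. Your first and third blocks coincide with the paper's treatment: the first relations follow from $d\om=0$, the definition \eqref{dLambda}, and degree counting, and the third from the Leibniz rule $[ab,c]=a[b,c]+[a,c]b$ together with $d^2=\dl^2=0$, which is exactly the paper's stated argument for \eq{ddl}. Two cosmetic remarks: the Jacobi identity for ordinary commutators holds in any associative algebra, so your parenthetical about $L,\La,H$ being even is unnecessary (evenness only matters for identifying graded with ungraded brackets, which is not needed here); and the sign bookkeeping you anticipated is in fact painless, since in $\dl\La A_k$ the operator $\dl$ acts on a $(k-2)$-form and carries $(-1)^{(k-2)+1}=(-1)^{k+1}$, the same sign as in $\La\dl A_k$, whence $[\dl,\La]=(-1)^{k+1}\ss\,[d,L]\,\ss=0$ directly.
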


\subsection{$d$, $\dl$, and $d\dl$ acting on forms}

Consider now the action of the differential operators on a $k$-form written in Lefschetz decomposed form of \eqref{Lefdec}.  Straightforwardly, we have 
\bea
dA_k & = & \frac{1}{r!} \sum L^r \,d B_{k-2r} \label{dAk}~,\\
\dl A_k & = & \frac{1}{r!} \sum L^r \left( d B_{k - 2r -2} + \dl B_{k-2r}\right)\label{dlAk}~,\\
d\dl A_k & =& \frac{1}{r!} \sum L^r \,d\dl B_{k-2r}~, \label{ddlAk}
\eea
where $B_{k}\in \CB^{k}(M)$.  The first and third equations is simply due to the fact that $d$ and $d\dl$ commute with $L$.  The second follows from commuting $\dl$ through $L^r$ and repeatedly applying the relation that $[\dl, L] = d\,$.  

Lefschetz decomposing $dB_k$, we can formally write
\be\label{dBLef}
dB_k =  B^0_{k+1} + L \, B^1_{k-1} + \ldots + \frac{1}{r!} L^r B^r_{k+1-2r}\,+\ldots.
\ee
But in fact the differential operators acting on primitive forms have special properties.

\begin{lem} Let $B_k\in \CB^k(M)$ with $k\leq n$.  The differential operators $(d, \dl, d\dl)$ acting on $B_k$ take the following forms:
\begin{itemize}
\setlength{\parsep}{0pt}
\setlength{\itemsep}{0pt}
\item[{\rm(i)}] If $k<n$, then $d B_k = B^0_{k+1} + L \,B^1_{k-1}\,$; 
\item[{\rm(i')}] If $k=n$, then $d B_k= L\, B^1_{k-1}\,$;
\item[{\rm(ii)}] $\dl B_k = -H B^1_{k-1}=-(n-k+1) \,B^1_{k-1}\,$;
\item[{\rm(iii)}] $d\dl B_k = B^{01}_k=-(H+1)dB^1_{k-1}\,$;
\end{itemize}
for some primitive forms $B^0, B^1, B^{01}\in \CB^*(M)$.
\label{diffB}
\end{lem}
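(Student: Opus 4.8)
The plan is to establish (i) and (i') first, since (ii) and (iii) then follow by applying $\La$ and $d$ together with the $sl(2)$ commutation relations of Lemma~\ref{dddrels}. For (i) I would begin from the Lefschetz decomposition \eqref{dBLef} of the $(k+1)$-form $dB_k$ and show that every component $B^r_{k+1-2r}$ with $r\geq 2$ vanishes. The key input is that $B_k$ primitive with $k\leq n$ gives $L^{n-k+1}B_k=0$ by Definition~\ref{pdef}(ii), and since $[d,L]=0$ from \eqref{drels} we obtain $L^{n-k+1}dB_k=0$. Applying $L^{n-k+1}$ term-by-term to \eqref{dBLef} and using that a nonzero primitive $s$-form satisfies $L^m B_s=0$ exactly when $m>n-s$, I would check that the $r=0$ and $r=1$ terms are annihilated (their exponents $n-k+1$ and $n-k+2$ exceed the respective $n-s$), while each term with $r\geq 2$ becomes $L^{n-k+1+r}B^r_{k+1-2r}$ whose exponent $n-k+1+r$ satisfies $n-k+1+r\leq n-(k+1-2r)$, i.e.\ it stays within the range where $L$ is injective on primitive forms. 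Since these surviving contributions carry distinct powers of $L$, they are distinct Lefschetz components of $L^{n-k+1}dB_k=0$, so by uniqueness of the decomposition each vanishes separately, and injectivity of $L^{n-k+1+r}$ on $\CB^{k+1-2r}$ forces $B^r_{k+1-2r}=0$ for all $r\geq 2$. This leaves $dB_k=B^0_{k+1}+L\,B^1_{k-1}$. For (i'), when $k=n$ the leading component $B^0_{n+1}$ would be a primitive form of degree $n+1>n$, hence zero, giving $dB_n=L\,B^1_{n-1}$.

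For (ii) I would use $\dl=d\La-\La d$ from \eqref{dLambda}: since $\La B_k=0$ the first term drops, so $\dl B_k=-\La\,dB_k$. Substituting (i), $\La B^0_{k+1}=0$ by primitivity, while $\La L B^1_{k-1}=L\La B^1_{k-1}+[\La,L]B^1_{k-1}=H B^1_{k-1}$ using \eqref{Lalg} and $\La B^1_{k-1}=0$; as $H$ acts on the $(k-1)$-form $B^1_{k-1}$ by multiplication by $n-k+1$, this yields $\dl B_k=-H B^1_{k-1}=-(n-k+1)B^1_{k-1}$. For (iii) I would first note that $d\dl$ preserves primitivity: by \eqref{ddl}, $\La\,(d\dl B_k)=d\dl\,\La B_k=0$, so $d\dl B_k$ is a primitive $k$-form, which I name $B^{01}_k$. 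Applying $d$ to the formula of (ii) gives $d\dl B_k=-(n-k+1)\,dB^1_{k-1}$; since $dB^1_{k-1}$ is a $k$-form, $H$ acts on it as $n-k$, so $-(n-k+1)=-(H+1)$ on this form, giving $B^{01}_k=-(H+1)\,dB^1_{k-1}$.

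The main obstacle is the bookkeeping in (i): correctly tracking which $L$-powers survive after applying $L^{n-k+1}$, and justifying both that the surviving contributions are genuinely independent Lefschetz components (so uniqueness forces each to vanish individually) and that their exponents remain in the injective range of $L$ on the relevant space of primitive forms. One should also observe that when $k\leq 2$ the decomposition \eqref{dBLef} simply has no terms with $r\geq 2$, so the stated form is immediate and the $L^{n-k+1}$ argument is needed only for $k\geq 3$; in that range the degree count confirms the argument is non-vacuous. Everything else reduces to the $sl(2)$ relations already recorded in Lemma~\ref{dddrels}.
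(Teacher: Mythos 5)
Your proof is correct, and while your parts (i'), (ii), (iii) coincide with the paper's own argument, your part (i) takes a genuinely different route. The paper proves (i) in two lines on the $\La$ side: since $\La B_k=0$, one has $\dl B_k=-\La\, d B_k$, and then $0=\dl\La B_k=\La\,\dl B_k=-\La\La\, dB_k$ using $[\dl,\La]=0$ from Lemma~\ref{dddrels}; applying $\La^2$ to \eqref{dBLef} and invoking uniqueness of the Lefschetz decomposition then kills every $B^r$ with $r\geq 2$ at once, with no case distinction on $k$. You instead work on the $L$ side, using the dual characterization of primitivity $L^{n-k+1}B_k=0$ from Definition~\ref{pdef}(ii) together with $[d,L]=0$ to get $L^{n-k+1}dB_k=0$, and then do exponent bookkeeping: your computation that $n-k+1+r\leq n-(k+1-2r)$ exactly when $r\geq 2$ is right, the needed injectivity of $L^m$ on $\CB^s$ for $m\leq n-s$ is the standard pointwise $sl(2)$ fact (it also follows from \eqref{ssact} together with $\ss\ss=1$), and your observation that for $k\leq 2$ the decomposition \eqref{dBLef} has no $r\geq 2$ terms correctly handles the degenerate degrees where $L^{n-k+1}dB_k$ would live above degree $2n$. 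What each approach buys: the paper's is shorter and isolates the clean identity $\La^2\,dB_k=0$ as the actual content of (i), but it uses the operator $\dl$ and the relation $[\dl,\La]=0$; yours avoids $\dl$ entirely in step (i), relying only on $[d,L]=0$ and representation-theoretic facts about powers of $L$ on primitive forms, at the cost of heavier bookkeeping and the $k\leq 2$ case check. Both ultimately rest on uniqueness of the Lefschetz decomposition, and your (iii) even spells out slightly more carefully than the paper why the scalar $-(n-k+1)$ can be rewritten as $-(H+1)$ acting on the $k$-form $dB^1_{k-1}$.
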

\begin{proof} (i) is the simple assertion that the Lefschetz decomposition of $dB_k$ contains at most two terms, or equivalently, $\La^2 dB =0\,$.  This follows from considering $\,0=\dl \La\, B = \La \dl B = - \La \La d B\,$, having used the relation $[\dl, \La]=0$.   
(i') removes the $B^0_{k+1}$ term on the right-hand side of (i) since primitive forms are at most of degree $n$.  
For (ii), it follows that
\begin{align*}\label{dlB}
\dl B_k &= - \La d \,B_k= - \La (L\, B^1_{k-1}) = -H B^1_{k-1} \notag\\
 & = -(n-k+1) B^1_{k-1}~,
\end{align*}
having used (i) and  the primitivity property $\La B_k = \La B^1_{k-1} =0$.  And as for (iii), the $d\dl$ operator preserves degree and commutes with $\La$.  Therefore, we must have $d\dl B_k = B^{01}_k$ and specifically $B^{01}_k=-(n-k+1)dB^1_{k-1}=-(H+1)dB^1_{k-1}$ applying $d$ to (ii).
\end{proof}

Let us recall the expression for the Lefschetz primitive forms:
\bes
B_{k-2r} = \Phi_{(k,k-2r)}(L,\La)\, A_k~. \tag{\ref{Bdef}}
\ees
This useful relation makes clear that $B_{k-2r}$ is explicitly just some combination of $L$ and $\La$ operators acting on $A_k$.  Since $d\dl$ commutes with $L$ and $\La$, this together with \eqref{ddlAk} implies the equivalence of the $d\dl$-closed and exact conditions on $A_k$ and its primitive decomposed forms $B_{k-2r}$.  Specifically, we have the following:
\begin{prop}Let $A_k, A'_{k}\in \CA^k(M)$.   Let $B_{k-2r}, B'_{k-2r}\in \CB^*(M)$ be respectively their Lefschetz decomposed primitive forms.  Then:
\begin{itemize}
\setlength{\parsep}{0pt}
\setlength{\itemsep}{0pt}
\item[{\rm(i)}] $d\dl$-closed: $d\dl A_k=0$ iff $\,d\dl B_{k-2r}=0$ for all $r$;
\item[\rm{(ii)}] $d\dl$-exact: $A_k= d\dl A'_{k}$ iff $\,B_{k-2r} = d\dl B'_{k-2r}$ for all $r$.
\end{itemize}
\label{ddlclex}
\end{prop}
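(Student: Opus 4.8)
The plan is to exploit the key structural fact recorded in equations~\eqref{Bdef} and~\eqref{ddlAk}: each Lefschetz-decomposed primitive form $B_{k-2r}$ is obtained from $A_k$ by applying the operator $\Phi_{(k,k-2r)}(L,\La)$, which is a polynomial in $L$ and $\La$; and by~\eqref{ddl} the operator $d\dl$ commutes with both $L$ and $\La$, hence with every $\Phi_{(k,k-2r)}(L,\La)$. This commutation is the engine driving both directions of both equivalences. I would open by stating explicitly the two facts I intend to use repeatedly: first, $d\dl\,\Phi_{(k,k-2r)}(L,\La) = \Phi_{(k,k-2r)}(L,\La)\,d\dl$ for all $r$; and second, the decomposition~\eqref{ddlAk}, namely $d\dl A_k = \sum_r \frac{1}{r!}L^r\,d\dl B_{k-2r}$, which says that the Lefschetz components of $d\dl A_k$ are precisely the $d\dl B_{k-2r}$.

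For part~(i), the forward direction is immediate from~\eqref{ddlAk} together with the uniqueness of the Lefschetz decomposition (stated after~\eqref{Bdef}): if $d\dl A_k = 0$, then since $d\dl$ preserves primitivity (the last remark after~\eqref{ddl}), the right-hand side of~\eqref{ddlAk} is itself a Lefschetz decomposition into primitive pieces $d\dl B_{k-2r}$, and uniqueness forces each $d\dl B_{k-2r}=0$. The converse direction is even more direct: if every $d\dl B_{k-2r}=0$, then~\eqref{ddlAk} gives $d\dl A_k=0$ term by term. For part~(ii), the converse (i.e. each $B_{k-2r}=d\dl B'_{k-2r}$ implies $A_k=d\dl A'_k$) follows by summing: set $A'_k = \sum_r \frac{1}{r!}L^r B'_{k-2r}$, and then $d\dl A'_k = \sum_r \frac{1}{r!}L^r\,d\dl B'_{k-2r} = \sum_r \frac{1}{r!}L^r B_{k-2r} = A_k$ by~\eqref{ddlAk} and the Lefschetz decomposition of $A_k$. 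The forward direction of~(ii) is the one place requiring the commutation fact in earnest: assuming $A_k = d\dl A'_k$, I apply $\Phi_{(k,k-2r)}(L,\La)$ to both sides and use~\eqref{Bdef} to get $B_{k-2r} = \Phi_{(k,k-2r)}(L,\La)\,d\dl A'_k = d\dl\bigl(\Phi_{(k,k-2r)}(L,\La)\,A'_k\bigr) = d\dl B'_{k-2r}$, where $B'_{k-2r}$ is by definition the $r$-th primitive component of $A'_k$.

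The only real subtlety—and where I expect to have to be careful—is making sure the forms produced in the converse of~(ii) are genuinely primitive, and that the indexing matches up: the $B'_{k-2r}$ appearing on the right of $B_{k-2r}=d\dl B'_{k-2r}$ must be the \emph{primitive} forms whose images under $d\dl$ are again primitive, which is guaranteed precisely by the remark that $d\dl:\CB^k(M)\to\CB^k(M)$ preserves primitivity. Once that is flagged, there is no analytic content here at all; the entire proposition is a formal consequence of the commutation relations~\eqref{ddl} and the uniqueness of the Lefschetz decomposition, so the main burden is bookkeeping rather than any genuine obstacle.
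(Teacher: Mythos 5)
Your proposal is correct and follows essentially the same route as the paper, whose proof is exactly the one-line observation that the claim ``follows straightforwardly from \eqref{Bdef}, \eqref{ddlAk} and that $d\dl$ commutes with $L$ and $\La$.'' You have merely written out the bookkeeping the paper leaves implicit (including the appeal to uniqueness of the Lefschetz decomposition and the primitivity-preservation of $d\dl$), and all of it checks out.
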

\begin{proof} It follows straightforwardly from \eq{Bdef} and \eq{ddlAk} that $d\dl$ commutes with $L$ and $\La$.
\end{proof}
Note that similar type of statements cannot hold for $d$ or $\dl$, individually.  As seen in the commutation relations of Lemma 2.3, $d$ generates $\dl$ when commuted through $\La$, and $\dl$ generates $d$ when commuted through $L$.   But imposing $d$ and $\dl$ together, we have the closedness relation
\begin{prop}Let $A_k\in \CA^k(M)$ and $B_{k-2r}\in \CB^*(M)$ be its Lefschetz decomposed primitive forms.  Then $d A_k= \dl A_k =0$ if and only if $dB_{k-2r}=0\,$, for all $r$.
\label{dpdlcl}
\end{prop}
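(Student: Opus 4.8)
The plan is to prove the two implications separately; the reverse implication is immediate, and the forward one carries all the content.

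For the reverse direction, suppose $dB_{k-2r}=0$ for every $r$. Then \eqref{dAk} gives $dA_k=0$ directly. Furthermore, by the uniqueness of the two-term Lefschetz decomposition $dB_{k-2r}=B^0_{(r)}+L\,B^1_{(r)}$ of Lemma \ref{diffB}(i) (with $B^0_{(r)}\in\CB^{k-2r+1}$ and $B^1_{(r)}\in\CB^{k-2r-1}$), vanishing of $dB_{k-2r}$ forces both primitive pieces $B^0_{(r)}$ and $B^1_{(r)}$ to vanish. Since Lemma \ref{diffB}(ii) writes $\dl B_{k-2r}=-(n-k+2r+1)\,B^1_{(r)}$, we obtain $\dl B_{k-2r}=0$ for all $r$, and then \eqref{dlAk} gives $\dl A_k=0$. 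This direction uses nothing beyond the two display formulas and Lemma \ref{diffB}.

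For the forward direction, assume $dA_k=\dl A_k=0$. First I would substitute the decompositions $dB_{k-2r}=B^0_{(r)}+L\,B^1_{(r)}$ into \eqref{dAk} and \eqref{dlAk}, and then regroup each resulting form into its unique Lefschetz normal form $\sum_s \tfrac{1}{s!}L^s(\text{primitive})$. Since each term $\tfrac{1}{r!}L^r(B^0_{(r)}+L\,B^1_{(r)})$ spreads over two adjacent Lefschetz levels, collecting the primitive component of a fixed degree and invoking uniqueness of the decomposition converts the two operator equations into linear relations among the $B^0_{(r)}$ and $B^1_{(r)}$. Concretely, $dA_k=0$ yields $B^0_{(s)}=-s\,B^1_{(s-1)}$ for every $s$ (so in particular $B^0_{(0)}=0$), while $\dl A_k=0$, after inserting $\dl B_{k-2r}=-(n-k+2r+1)B^1_{(r)}$ from Lemma \ref{diffB}(ii), yields $B^0_{(s+1)}+\bigl(s-(n-k+2s+1)\bigr)B^1_{(s)}=0$.

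Feeding the first family of relations into the second collapses everything to the single family $-(n-k+2s+2)\,B^1_{(s)}=0$. The one step I expect to be the real obstacle is verifying that this scalar coefficient never vanishes on the relevant range: because $B^1_{(s)}$ is extracted from the genuine primitive form $B_{k-2s}$, which by Definition \ref{pdef} has degree $k-2s\le n$, we have $n-k+2s\ge 0$ and hence $n-k+2s+2\ge 2>0$. Therefore $B^1_{(s)}=0$ for all $s$, whence $B^0_{(s)}=-s\,B^1_{(s-1)}=0$ as well, so that $dB_{k-2r}=B^0_{(r)}+L\,B^1_{(r)}=0$ for every $r$, as desired.
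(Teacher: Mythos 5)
Your proof is correct, but your forward implication goes by a genuinely different route than the paper's. The paper applies $d$ directly to the identity $B_{k-2r}=\Phi_{(k,k-2r)}(L,\La)\,A_k$ of \eqref{Bdef} and observes that commuting $d$ through any polynomial in $L,\La$ only ever produces $d$ and $\dl$ hitting $A_k$ (by Lemma \ref{dddrels}: $[d,L]=0$, $[d,\La]=\dl$, $[\dl,L]=d$, $[\dl,\La]=0$), so $dA_k=\dl A_k=0$ kills every term at once and $dB_{k-2r}=0$ with essentially no computation. You instead never invoke \eqref{Bdef}: you expand $dA_k$ and $\dl A_k$ into Lefschetz normal form via the two-term decomposition of Lemma \ref{diffB}, extract the linear relations $B^0_{(s)}=-s\,B^1_{(s-1)}$ and $B^0_{(s+1)}=(n-k+s+1)\,B^1_{(s)}$ by uniqueness of the decomposition, and combine them into $(n-k+2s+2)\,B^1_{(s)}=0$, where positivity of the coefficient (since $\deg B_{k-2s}=k-2s\le n$) finishes the argument --- I checked the factorial and coefficient bookkeeping and it is all right. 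The paper's argument buys brevity and robustness (no coefficients to track); yours buys transparency: it isolates the precise scalar that must be nonvanishing, and your intermediate relations show exactly what $dA_k=0$ alone yields (namely $B^0_{(s)}=-s\,B^1_{(s-1)}$, not $dB_{k-2r}=0$), which nicely illuminates the paper's remark following Proposition \ref{dpdlcl} that such statements fail for $d$ or $\dl$ individually. Your reverse direction is essentially the paper's, except the paper obtains $\dl B_{k-2r}=-\La\, dB_{k-2r}=0$ in one line where you route through the vanishing of $B^1_{(r)}$ and Lemma \ref{diffB}(ii); both are fine.
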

\begin{proof} Starting with \eq{Bdef}, we apply the exterior derivative $d$ to it.  Commuting $d$ through $\Phi_{(k,k-2r)}(L,\La)$, the $d$- and $\dl$-closedness of $A_k$ immediately implies $dB_{k-2r}=0$.   Assume now $dB_{k-2r}=0\,$, for all $B_{k-2r}$.  Note that this trivially also implies $\dl B_{k-2r}= -\La dB_{k-2r}=0$.  With the expressions \eq{dAk} and \eq{dlAk}, we therefore find $d A_k= \dl A_k =0$.
\end{proof}

In the proof, we have made use of the property that $dB_{k}=0$ implies $\dl B_k=0$.  Note, however, that the converse does not hold: $\dl B_{k}=0$ does not imply $dB_k=0$.

\subsection{Symplectomorphism and Lie derivative}

The three differential operators $d$, $\dl$, and $d\dl$ are good symplectic operators in the sense that they commute with all symplectomorphisms of a symplectic manifold.  Under a symplectomorphism, $\vp: (M,\om) \to (M,\om)$, the action on the constituents $d$ and $\La$ are 
\begin{align*}
\vp^*(dA)&= d (\vp^* A)~, \\
\vp^*(\La A) & = \La (\vp^*A)~,
\end{align*}
implying all three operators commute with $\varphi$.

Let us calculate how a differential form varies under a vector field $V$ that generates a symplectomorphism of $(M,\om)$.  The Lie derivative of $A_k$ follows the standard Cartan formula
\be\label{Cartan}
\CL_V\,  A_k = i_V (d\,A_k) + d(i_V A_k)~.
\ee
Since $V$ preserves $\om$, $\CL_V \om =0\,$ and there is a closed one-form associated to $V$,
\be\label{vtangent}
{ v= i_V \om ~, \qquad {\rm where}~~ dv=0~.}
\ee
Of interest, the Lie derivative of $V$ preserves the Lefschetz decomposition of forms and allows us to express the Lie derivative in terms of the $\dl$ operator.
\begin{lem}  \label{Lie}  Let $V$ be a vector field $V$ that generates a symplectomorphism of $(M,\om)$.  The action of the Lie derivative $\CL_V$ takes the form
\begin{itemize}
\setlength{\parsep}{0pt}
\setlength{\itemsep}{0pt}
\item[{\rm(i)}] $\CL_V A_k = \sum \frac{1}{r!} \,L^r  \, (\CL_V B_{k-2r})\,;$
\item[\rm{(ii)}] $\CL_V B_k = - \dl ( v\w B_k )  - v \w \dl B_k\,$.
\end{itemize}
\end{lem}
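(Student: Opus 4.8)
The plan is to handle the two parts in order: part (i) will follow from the fact that $\CL_V$ commutes with the $sl(2)$ generators, while part (ii) rests on a single algebraic identity for the interior product $i_V$.

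For part (i), I would first note that $\CL_V$ commutes with both $L$ and $\La$. Commutation with $L$ is immediate: $\CL_V(\om\w A)=(\CL_V\om)\w A+\om\w\CL_V A=\om\w\CL_V A$, since $\CL_V\om=0$. Commutation with $\La$ follows by differentiating at $t=0$ the relation $\vp_t^*(\La A)=\La(\vp_t^* A)$ for the symplectomorphism flow $\vp_t$ of $V$, recorded just before the statement. Because the Lefschetz projector $\Phi_{(k,k-2r)}(L,\La)$ of \eqref{Bdef} is a polynomial in $L$ and $\La$, it too commutes with $\CL_V$; hence $\CL_V B_{k-2r}=\Phi_{(k,k-2r)}(L,\La)\,\CL_V A_k$ is exactly the $(k-2r)$-th primitive component of $\CL_V A_k$. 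In particular $\CL_V B_{k-2r}$ is again primitive, as $\La\,\CL_V B_{k-2r}=\CL_V\La B_{k-2r}=0$. Applying $\CL_V$ to the Lefschetz decomposition \eqref{Lefdec} and pulling it through each $L^r$ then yields (i).

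For part (ii), the key step is the pointwise identity
\bes
i_V A = v\w\La A-\La(v\w A)\,, \qquad v=i_V\om\,,
\ees
valid on all forms $A$. All three operators are algebraic (order zero) and linear in $V$, so it suffices to verify the identity at an arbitrary point, in Darboux coordinates where $\om=\sum dp_i\w dq_i$ and $\La=\sum i(\pa_{q_i})i(\pa_{p_i})$, and by linearity only for the basis fields $V=\pa_{p_j}$ (where $v=dq_j$) and $V=\pa_{q_j}$ (where $v=-dp_j$); this is a short direct check. Granting the identity, I would substitute it into Cartan's formula \eqref{Cartan}, $\CL_V B_k=i_V dB_k+d(i_V B_k)$. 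Using $\La B_k=0$ gives $i_V B_k=-\La(v\w B_k)$, and using $\dl=d\La-\La d$ from \eqref{dLambda} gives $\La dB_k=-\dl B_k$, so that $i_V dB_k=-v\w\dl B_k-\La(v\w dB_k)$. Finally $dv=0$ from \eqref{vtangent} gives $v\w dB_k=-d(v\w B_k)$, whence the two remaining terms combine as $-\La(v\w dB_k)-d\La(v\w B_k)=(\La d-d\La)(v\w B_k)=-\dl(v\w B_k)$. Collecting the pieces gives $\CL_V B_k=-\dl(v\w B_k)-v\w\dl B_k$, which is (ii).

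The main obstacle is establishing the interior-product identity with the correct signs; once it is in hand, everything is bookkeeping driven by $\La B_k=0$ and $dv=0$. One could alternatively try to derive the identity from $\La=\ss L\,\ss$ together with the behavior of $\ss$ under contraction, but the Darboux verification for the two basis fields is the most direct route.
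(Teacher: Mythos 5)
Your proof is correct and takes essentially the same route as the paper: part (ii) is the identical Cartan-formula computation, resting on the identity $i_V A = v\w \La A - \La(v\w A)$, which the paper states only in the primitive case $i_V B = -\La(v\w B)$ but implicitly uses in general on $dB_k$ — your explicit Darboux verification for the basis fields is a sound (and slightly more careful) justification of that step. For part (i), your argument that $\CL_V$ commutes with $L$ and $\La$, hence with the Lefschetz projectors $\Phi_{(k,k-2r)}(L,\La)$, simply unpacks the paper's one-line appeal to $\CL_V\om=0$, with the added benefit of making explicit that each $\CL_V B_{k-2r}$ is again primitive.
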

\begin{proof} (i) follows from $\CL_V \om =0\,$. It can also be shown by using Cartan's formula \eqref{Cartan} and the property $i_V(A_k \w A'_{k'}) = i_V(A_k)\w A'_{k'} + (-1)^k A_k \w i_V(A'_{k'})\,$.   For (ii), note that $V$ and $v$ in components are explicitly related by $V^i=(\om^{-1})^{ji}v_j$. Therefore, acting on a primitive form $B$, we have $i_V B = - \Lambda (v \w B)$.  Thus, we have
\begin{align*}
\CL_V B &= i_V (dB) + d(i_V B) = - \La ( v \w dB) + v \w (\La\, dB) - d\left[\La(v\w B)\right] \\
& = (\La d - d \La)(v \w B) + v \w (\La d)\, B \\
&= - \dl ( v\w B)  - v \w \dl B~.
\end{align*}
\end{proof}
 
In addition, if $V$ is a hamiltonian vector field - that is, $v=dh$ for some hamiltonian function $h$ - then the Lie derivative formula simplifies further in the following scenario. 
\begin{prop} \label{Liehdpdl} Let $A_k\in \CA^k(M)$ and $V$ a hamiltonian vector field with its associated one-form $v= dh$. If $dA_k=\dl A_k=0$, then 
\be\label{ddlh}
\CL_V A_k = d\dl (h\, A_k)~.
\ee
\end{prop}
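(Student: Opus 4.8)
The plan is to reduce the claim, by Cartan's formula, to a single pointwise identity for $\dl$ acting on the product $h\,A_k$. Since $dA_k=0$, Cartan's formula \eqref{Cartan} collapses to $\CL_V A_k = d(i_V A_k)$, so it suffices to prove the stronger statement $\dl(h\,A_k)=i_V A_k$ and then apply $d$ to both sides. In other words, I would show that multiplying a $d$- and $\dl$-closed form by a Hamiltonian function and then applying $\dl$ is the same as contracting with the associated vector field.

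To establish $\dl(h\,A_k)=i_V A_k$, I would expand $\dl = d\La-\La d$ via its definition \eqref{dLambda}. Because $\La$ is a zeroth-order (purely algebraic) operator, it commutes with multiplication by the function $h$, so $\La(h\,A_k)=h\,\La A_k$; and $dA_k=0$ gives $d(h\,A_k)=dh\w A_k = v\w A_k$ with $v=dh$ as in \eqref{vtangent}. Carrying these through yields
\[
\dl(h\,A_k) = d(h\,\La A_k)-\La(v\w A_k) = v\w \La A_k + h\,d\La A_k - \La(v\w A_k).
\]
Here $d\La A_k = \dl A_k$ (again using $dA_k=0$), which vanishes by hypothesis, killing the middle term; and the remaining two terms combine, by the interior-product identity $i_V A = v\w\La A - \La(v\w A)$, into $i_V A_k$. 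Finally, applying $d$ and invoking $dA_k=0$ once more through Cartan's formula gives $d\dl(h\,A_k)=d(i_V A_k)=\CL_V A_k$.

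The one genuinely nontrivial ingredient is the interior-product identity $i_V A = v\w\La A - \La(v\w A)$, equivalently $i_V = -[\La,\,v\w\,\cdot\,]$, valid for all forms $A$ with $v=i_V\om$. This is the extension to arbitrary forms of the relation $i_V B = -\La(v\w B)$ already used for primitive $B$ in the proof of Lemma \ref{Lie}; for non-primitive $A$ the extra term $v\w\La A$ appears. I expect this to be the main obstacle, and would verify it by a direct index computation from $\La=\tfrac12(\om^{-1})^{ij}i_{\pa_{x^i}}i_{\pa_{x^j}}$: push the two interior products past $v\w\,\cdot\,$, discard the term reproducing $v\w\La A$, and collect the two equal cross-terms using the antisymmetry of $(\om^{-1})^{ij}$, recognizing the result as $-i_V A$ via $V^i=(\om^{-1})^{ji}v_j$. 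Pinning down the signs against the paper's conventions (so that $\La L f = nf$) is where all the care is needed; the rest of the argument is formal manipulation with $d$, $\La$, and multiplication by $h$, with both hypotheses $dA_k=0$ and $\dl A_k=0$ entering exactly once.
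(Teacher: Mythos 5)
Your proof is correct, but it takes a genuinely different route from the paper's. The paper argues via Lefschetz decomposition: Proposition \ref{dpdlcl} reduces the hypotheses to $dB_{k-2r}=0$ on each primitive component, Lemma \ref{Lie}(ii) with $v=dh$ then gives $\CL_V B_{k-2r} = -\dl(dh\w B_{k-2r}) = d\dl(h\,B_{k-2r})$ on primitives, and the result is reassembled using that both $\CL_V$ and $d\dl$ commute with $L$. You instead bypass the decomposition entirely and prove the stronger pointwise statement $\dl(h\,A_k) = i_V A_k$ for any $d$- and $\dl$-closed $A_k$, which hands you the proposition via Cartan's formula. Your key identity $i_V = v\w\La - \La(v\w\,\cdot\,)$ does hold with the paper's conventions: from $\{i_{\pa_{x^i}}, v\w\,\cdot\,\} = v_i$ one gets $[\,i_{\pa_{x^i}}i_{\pa_{x^j}}, v\w\,\cdot\,] = v_j\, i_{\pa_{x^i}} - v_i\, i_{\pa_{x^j}}$, and contracting with $\tfrac12(\om^{-1})^{ij}$ and using $V^i = (\om^{-1})^{ji}v_j$ yields $[\La, v\w\,\cdot\,] = -i_V$; this is exactly the extension to non-primitive forms of the relation $i_V B = -\La(v\w B)$ the paper uses inside Lemma \ref{Lie}, where the extra term $v\w\La A$ drops because $\La B = 0$. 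What each approach buys: the paper's proof recycles its structural machinery (Propositions \ref{ddlclex}--\ref{dpdlcl}, Lemma \ref{Lie}) and reinforces the Lefschetz-compatibility theme central to the paper, whereas yours is more self-contained and elementary, and produces a stronger byproduct — the closed-form expression $i_V A_k = \dl(h\,A_k)$ for the contraction itself, not just its exterior derivative. One cosmetic inaccuracy: $dA_k=0$ enters your argument three times (Cartan's formula, the Leibniz expansion $d(hA_k)=v\w A_k$, and $d\La A_k = \dl A_k$), not once; this does not affect correctness.
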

\begin{proof} By Proposition \ref{dpdlcl}, $A_k$ being $d$- and $\dl$-closed implies that the Lefschetz decomposed primitive forms, $B_{k-2r}\,$, are $d$-closed for all $r$.  Now, $dB_{k-2r}=0$ implies $\dl B_{k-2r}=0\,$ and so  Lemma \ref{Lie}(ii) with $v=dh$ becomes 
\bes
\CL_V B_{k-2r} = -\dl(dh \w B_{k-2r}) = d\dl (h \,B_{k-2r})~.
\ees
The expression \eqref{ddlh} for $\CL_V A_k$ is then obtained using the commutativity of both the Lie derivative and $d\dl$ with respect to Lefschetz decomposition, as in Proposition \ref{Lie}(i) and \eqref{ddlAk}.
\end{proof}

That $d\dl$ naturally arise in the hamiltonian deformation of $d$- and $\dl$-closed differential forms is noteworthy, and we will make use of this property in the following sections in our analysis of cohomology.

\subsection{Compatible almost complex structure and Hodge adjoints}

Since the $\dl$ operator may not be as familiar, it is useful to have an alternative description of it.  We give a different expression for $\dl$ below making use of the compatible pair of almost complex structure and its associated metric which exists on all symplectic manifolds.  

An almost complex structure $J$ is said to be compatible with the symplectic form if it satisfies the conditions
\begin{align*}
\om(X,JX)&> 0 ~, \quad\quad  \forall \, X\neq 0 ~, \notag\\
\om(JX,JY)&=\om(X,Y)~. \qquad 
\notag
\end{align*}
These two conditions give a well-defined Riemannian metric $g(X,Y)= \om(X, JY),$ which is also hermitian with respect to $J$.  $(\om, J, g)$ together forms what is called a compatible triple.  We can use the metric $g$ to define the standard Hodge $*$ operator.  The dual Lefschetz operator $\La$ is then just the adjoint of $L$ (see for example \cite[p.\,33]{Huy})
\be\label{Linv}
\La =L^*=  (-1)^k * L\, *~.
\ee
We can write $\dl$ in terms of a standard differential operator in complex geometry.  
We will make use of the Weil relation for primitive $k$-forms $B_k$ \cite{Weil},
\be\label{Weil}
*\,\frac{1}{r!}L^r B_k = (-1)^{\frac{k(k+1)}{2}} \frac{1}{(n-k-r)!} \, L^{n-k-r} \CJ(B_k)~,
\ee
where 
\bes
\CJ=\sum_{p,q} (\sqrt{-1}\,)^{p-q} \ \Pi^{p,q}~
\ees
projects a $k$-form onto its $(p,q)$ parts times the multiplicative factor $(\sqrt{-1}\,)^{p-q}$.  Comparing \eqref{Weil} to the action of the symplectic star operator \eqref{ssact}, we have the relations
\be \label{srels}
* =  \CJ\, \ss ~.
\ee
This leads to following relation.
\begin{lem}\label{dldcs} Given any compatible triple $(\om, J, g)$ on a symplectic manifold, the differential operator $\dl = [d,\La]$ and the $d^c$ operator
\bes
d^c := \CJ^{-1} d\, \CJ~,
\ees
are related via the Hodge star operator defined with respect to the compatible metric $g$ by the relation  
\be\label{dldcdef}
 \dl = \dcs :=-* d^c * ~.
\ee
\end{lem}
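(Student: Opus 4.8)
The plan is to reduce the claim entirely to the symplectic-adjoint formula $\dl = (-1)^{k+1}\,\ss d\,\ss$ of \eqref{dlsymp}, using the dictionary $*=\CJ\,\ss$ from \eqref{srels} between the Hodge star and the symplectic star. First I would record two elementary facts about $\CJ$. Since $\CJ=\sum_{p,q}(\sqrt{-1}\,)^{p-q}\Pi^{p,q}$ and $p-q\equiv p+q=k \pmod{2}$ on a $k$-form, one has $\CJ^2=(-1)^k$ on $\CA^k(M)$. Second, because $\om$ is of type $(1,1)$ with respect to $J$, both $L$ and $\La$ preserve $p-q$, so $\CJ$ commutes with $L$ and $\La$; combined with the explicit action \eqref{ssact} of $\ss$ (which on a primitive $B_s$ is a degree-dependent scalar times $L^{n-s}$, a form on which $\CJ$ again acts degree-preservingly), this shows $\CJ$ commutes with $\ss$, i.e. $*=\CJ\,\ss=\ss\,\CJ$ as operators.

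With these in hand the computation is short. Substituting $*=\CJ\,\ss$ into the definition $-*\,\dc\,* = -*\,\CJ^{-1} d\,\CJ\,*$ and using the commutativity to collapse the intermediate factors, I would compute
\[
-*\,\dc\,* \;=\; -(\CJ\,\ss)\,\CJ^{-1}\,d\,\CJ\,(\CJ\,\ss) \;=\; -\,\ss\, d\, \CJ^2\, \ss ,
\]
where the left pair gives $\CJ\,\ss\,\CJ^{-1}=\CJ\,\CJ^{-1}\,\ss=\ss$ and the two adjacent $\CJ$'s in the middle give $\CJ^2$. Reading right to left on $A_k$, the factor $\CJ^2$ now acts on the $(2n-k)$-form $\ss A_k$ and contributes $(-1)^{2n-k}=(-1)^k$, so that $-*\,\dc\,* = (-1)^{k+1}\,\ss d\,\ss$, which is exactly $\dl$ by \eqref{dlsymp}. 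Hence $\dl=\dcs$.

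The only genuine work is the second bookkeeping fact, the commutativity $\CJ\,\ss=\ss\,\CJ$, together with keeping the three occurrences of $\CJ$ (two coming from the two Hodge stars, one from $\dc$) correctly paired with their degrees, so that one pair cancels against $\CJ^{-1}$ while the other produces the sign $(-1)^k$ that matches the $(-1)^{k+1}$ in \eqref{dlsymp}. I expect the main obstacle to be purely this sign-and-degree tracking rather than any conceptual difficulty; everything else is formal manipulation of the already-established identities \eqref{ssact}, \eqref{Linv}, \eqref{dlsymp}, and \eqref{srels}.
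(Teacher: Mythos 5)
Your proposal is correct and is essentially the paper's own proof (its ``alternatively'' route): both reduce \eqref{dldcdef} to the symplectic-adjoint formula \eqref{dlsymp} via $*=\CJ\,\ss$ from \eqref{srels}, with the sign coming from $\CJ^{\pm 2}=(-1)^k$ on $k$-forms. The only difference is cosmetic --- you run the computation from $-*\dc\,*$ back to $(-1)^{k+1}\ss d\,\ss$ rather than forward, and you make explicit the commutativity $\CJ\,\ss=\ss\,\CJ$ that the paper uses tacitly when shuffling the $\CJ$ factors.
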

\begin{proof} 
This can be shown directly starting from the definition of $\dl=d\La - \La d$ and making use of \eqref{Linv} and \eqref{Weil}, as in, for example, \cite[p.\,122]{Huy}.  Alternatively, we can write  
$\dl$ as the symplectic adjoint of $d$ and then apply \eqref{srels}.  Acting on a $k$-form, we have
\be
\dl = (-1)^{k+1} \ss d\, \ss = (-1)^{k+1} * \CJ^{-1} d \, *\, \CJ^{-1}  = (-1)^{k+1} * d^c * \CJ^{-2} =  \dcs ~,
\ee
having noted that $\CJ^{-2} = (-1)^k$, acting on a $k$-form.   
\end{proof}

Thus, making use of an almost complex structure, we have found that $\dl$ is simply $\dcs$.  We do emphasize that none of the formulas above requires the almost complex structure to be integrable.   In particular, $\dc\neq \sqrt{-1}(\bpa -\pa)$ in general.  From Lemma \ref{dldcs}, we again easily find $\dl\dl=0$ since $\dc\dc=0$.

Having expressed $\dl$ in terms of the Hodge star operator, we shall write down the standard Hodge adjoint of the differential operators.  With the inner product
\be\label{imet}
(A,A') =  \int_M A \w * A' = \int_M g(A, A')\, d{\rm vol} \, \qquad  A, A' \in \CA^k(M)~,
\ee
they are given as follows.
\begin{align} 
d^* &=  - * d * ~,\label{dstar}\\
\dls &= ([d,\La])^* =  [ L, d^*] = * \dl * ~,\label{dlstar} \\
(d\dl)^* &= - \ds L \,\ds = (-1)^{k+1} * d \dl * ~.\label{ddlstar}
\end{align}
The following commutation relations are easily obtained by taking the Hodge adjoints of the relations in Lemma \ref{dddrels}.
\begin{lem}\label{dssrels} For any compatible triple $(\om, J, g)$ on a symplectic manfiold, the commutation relations of the Hodge adjoints $(\ds,\dls,(d\dl)^*
)$ with the $sl(2)$ representation $(L,\La ,H)$ are 
\begin{align}
[\ds, L]& = - \dls\, , \qquad~\,\, [\ds, \La]=0\,,\qquad~~~~~[\ds, H] = -\ds\,, \\
[\dls, L]& =0 \, , \qquad\quad~~\, [\dls, \La] = - \ds \, ,\qquad [\dls, H] = \dls \,,\\
[(d\dl)^*, L]&=0\, , \quad\quad~\;   [(d\dl)^*,\La]=0\, , \qquad   [(d\dl)^*, H]=0\,.
\end{align} 
\end{lem}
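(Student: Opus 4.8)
The plan is to derive every relation in Lemma~\ref{dssrels} by taking the Hodge adjoint of the corresponding relation in Lemma~\ref{dddrels}, exactly as the sentence preceding the statement anticipates. The one algebraic fact that drives everything is that adjoining a commutator reverses the order of the factors and therefore flips its sign: for any two operators $A,B$ on $\CA^*(M)$ with respect to the inner product \eqref{imet}, one has $[A,B]^* = (AB-BA)^* = B^*A^* - A^*B^* = -[A^*,B^*]$. So each of the nine target relations is the adjoint of one of the nine relations in Lemma~\ref{dddrels}, up to this sign.

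First I would record the adjoint identities for the operators involved, all of which are available from the earlier formulas. From \eqref{dstar} we have $d^* = \ds$; from \eqref{Linv} we have $L^* = \La$, and adjoining once more gives $\La^* = L$; from \eqref{dlstar} we have $(\dl)^* = \dls$; and $H$ is self-adjoint, $H^* = H$, since $H = \sum_k (n-k)\,\Pi^k$ and the degree projections $\Pi^k$ are mutually orthogonal for the inner product \eqref{imet} (forms of distinct degree are pointwise orthogonal). With these in hand I would simply apply the commutator-adjoint rule term by term. For example, adjoining $[d,\La]=\dl$ yields $-[\ds,L]=\dls$, i.e. $[\ds,L]=-\dls$; adjoining $[d,H]=d$ yields $-[\ds,H]=\ds$, i.e. $[\ds,H]=-\ds$; adjoining $[\dl,L]=d$ yields $-[\dls,\La]=\ds$, i.e. $[\dls,\La]=-\ds$; adjoining $[\dl,H]=-\dl$ yields $[\dls,H]=\dls$; and the remaining entries $[\ds,\La]=0$, $[\dls,L]=0$ come from $[d,L]=0$ and $[\dl,\La]=0$. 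The entire bottom row is immediate: adjoining $[d\dl,L]=[d\dl,\La]=[d\dl,H]=0$ gives $[(d\dl)^*,\La]=[(d\dl)^*,L]=[(d\dl)^*,H]=0$.

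I do not expect a genuine obstacle here, since the result is purely formal. The only points requiring care are the sign introduced by the commutator-adjoint rule and the bookkeeping of which operator is adjoint to which: because $L^*=\La$ but $\La^*=L$, the relation generated from the $d$-row of Lemma~\ref{dddrels} ends up stated in terms of $[\ds,L]$ rather than $[\ds,\La]$, and similarly for the $\dl$-row, so one must track this interchange to land on the correct side of each equation. The self-adjointness of $H$ is the one ingredient not written verbatim earlier, but it follows at once from the orthogonality of the degree decomposition, and everything else quotes \eqref{dstar}, \eqref{Linv}, and \eqref{dlstar} directly.
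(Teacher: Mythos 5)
Your proposal is correct and is exactly the paper's argument: the paper proves Lemma \ref{dssrels} in one line by taking Hodge adjoints of the relations in Lemma \ref{dddrels}, and your sign bookkeeping via $[A,B]^*=-[A^*,B^*]$ together with $d^*=\ds$, $(\dl)^*=\dls$, $L^*=\La$, $\La^*=L$, and $H^*=H$ fills in the details accurately. The one ingredient you supply beyond what the paper states explicitly, the self-adjointness of $H$ from orthogonality of the degree decomposition, is exactly right and needed.
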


\


\section{Symplectic cohomologies}

In this section, we discuss the cohomologies that can be constructed from the three differential operators $d$, $\dl$, and $d\dl$. We begin with the known cohomologies with $d$ (for de Rham $H_d$) and $\dl$ (for $H_{\dl}$).  In the relatively simple case of the $\dl$ cohomology, we show how a Hodge theory can be applied.  Then we consider building cohomologies by combining $d$ and $\dl$ together.

\subsection{$d$ and $\dl$ cohomologies}

With the exterior derivative $d$, there is of course the de Rham cohomology
\be\label{cohdR}
H^k_d(M) = \frac{\ker d \cap \CA^k(M)}{\im d \cap \CA^k(M)}~
\ee
that is present on all Riemannian manifolds.  Since $\dl\dl=0$, there is also a natural cohomology
\be\label{cohdl}
H^k_{\dl}(M)=\frac{\ker \dl \cap \CA^k(M)}{\im \dl \cap \CA^k(M)}~.
\ee
This cohomology has been discussed in \cite{Brylinski, Mathieu, Yan, Guillemin}.  

From the symplectic adjoint description of $\dl$, the two cohomologies can be easily seen to be related by the symplectic $\ss$ operator.   Expressing $\dl= (-1)^k \ss d\, \ss$, there is a bijective map given by $\ss$ between the space of $d$-closed $k$-form and the space of  $\dl$-closed $(2n-k)$-form.  For if $A_k$ is a $d$-closed $k$-form, then $\ss A_k$ is a $\dl$-closed $(2n-k)$-form.  Likewise, if $A_k = d A'_{k-1}$ is $d$-exact, then $(-1)^k \ss A_k = (-1)^k \ss d \ss (\ss A'_{k-1}) = \dl (\ss A'_{k-1})$ is $\dl$-exact. This implies the following proposition.
\begin{prop}[Brylinski \cite{Brylinski}] \label{ddliso} The $\ss$ operator provides an isomorphism between $H^k_d(M)$ and $H^{2n-k}_{\dl}(M)$.  Moreover, $\dim H^k_{d}(M) = \dim H^{2n-k}_{\dl}(M)$.
\end{prop}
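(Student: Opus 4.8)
The plan is to exhibit $\ss$ itself as the desired isomorphism, exploiting that it is a degree-reversing involution on forms which intertwines $d$ with $\dl$. The entire content has, in effect, already been front-loaded into two facts established earlier: first, that the symplectic star is an involution, $\ss\ss=1$ (a consequence of the action formula \eqref{ssact}), so that $\ss$ is a linear isomorphism $\CA^k(M)\to\CA^{2n-k}(M)$; and second, the symplectic-adjoint relation \eqref{dlsymp}, namely $\dl=(-1)^{j+1}\ss d\,\ss$ on $\CA^j(M)$. I would begin by recording these, since the rest is bookkeeping.

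Next I would verify that $\ss$ carries $d$-closed forms to $\dl$-closed forms. Given $A_k$ with $dA_k=0$, its image $\ss A_k$ lies in $\CA^{2n-k}(M)$, so applying \eqref{dlsymp} in degree $2n-k$ and using $\ss\ss=1$ gives $\dl(\ss A_k)=(-1)^{2n-k+1}\ss d\ss(\ss A_k)=(-1)^{2n-k+1}\ss(dA_k)=0$. Hence $\ss$ sends $\ker d\cap\CA^k(M)$ into $\ker\dl\cap\CA^{2n-k}(M)$. I would then check that exactness is likewise preserved: if $A_k=dA'_{k-1}$, then $\ss A'_{k-1}\in\CA^{2n-k+1}(M)$ and \eqref{dlsymp} yields $\dl(\ss A'_{k-1})=(-1)^{2n-k+2}\ss(dA'_{k-1})=(-1)^k\ss A_k$, so that $\ss A_k=(-1)^k\dl(\ss A'_{k-1})$ lies in $\im\dl\cap\CA^{2n-k}(M)$. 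Thus $\ss$ also maps $\im d\cap\CA^k(M)$ into $\im\dl\cap\CA^{2n-k}(M)$.

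To finish, I would run the same two computations with the roles of $d$ and $\dl$ interchanged — again legitimate because $\ss\ss=1$ makes $\ss$ its own inverse — to obtain the reverse inclusions. Consequently $\ss$ restricts to a linear isomorphism $\ker d\cap\CA^k(M)\xrightarrow{\sim}\ker\dl\cap\CA^{2n-k}(M)$ that carries the subspace $\im d\cap\CA^k(M)$ \emph{onto} $\im\dl\cap\CA^{2n-k}(M)$. A linear isomorphism mapping one subspace onto another descends to an isomorphism of the quotient spaces, so it induces $H^k_d(M)\cong H^{2n-k}_\dl(M)$, with inverse induced by $\ss$ itself. The dimension statement $\dim H^k_d(M)=\dim H^{2n-k}_\dl(M)$ is then immediate, both sides being finite since $M$ is compact and $H^*_d(M)$ is ordinary de Rham cohomology.

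There is no real obstacle here: once $\ss\ss=1$ and \eqref{dlsymp} are in hand, the proposition is formal. The only point demanding care is tracking the sign $(-1)^{\bullet}$ through the degree shifts $k\mapsto 2n-k$; since these signs are nonzero scalars, they affect neither the closedness/exactness conditions nor the well-definedness of the induced map, so the argument goes through cleanly.
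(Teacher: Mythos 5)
Your proof is correct and follows essentially the same route as the paper, which likewise combines $\ss\ss=1$ with the adjoint relation $\dl=(-1)^{k+1}\ss\, d\,\ss$ to show that $\ss$ carries $d$-closed $k$-forms bijectively to $\dl$-closed $(2n-k)$-forms and $d$-exact forms to $\dl$-exact forms (the paper's computation $(-1)^k\ss A_k=\dl(\ss A'_{k-1})$ is exactly your exactness step). Your explicit check of the reverse inclusions via the involution, and the descent to the quotient, merely spell out what the paper leaves implicit in calling the map bijective.
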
 

Proceeding further, we leave aside $\dl$'s symplectic adjoint origin and treat  it as an independent differential operator.  We utilize the compatible triple $(\om, J, g)$ on $M$ as discussed in Section 2.3 to write the Laplacian associated with the $\dl$ cohomology. 
\be\label{dlLap}
\Delta_{\dl} = \dls \dl + \dl \dls~,
\ee
where here $\dls$ is the Hodge adjoint in \eqref{dlstar}.  (Note that if we had used the symplectic adjoint, we would have obtained zero in the form of $d\dl + \dl d = 0\,$.)  The self-adjoint Laplacian naturally defines a harmonic form.  By the inner product, 
\bes
0=(A, \Delta_{\dl} A) = \| \dl A\|^2 + \| \dls A  \|^2 ~,
\ees
we are led to the following definition.

\begin{dfn}A differential form $A\in \CA^*(M)$ is called $\dl$-{\it harmonic} if  $\Delta_{\dl} A= 0$, or equivalently, $\dl A = \dls A =0$.   We denote the space of $\dl$-harmonic $k$-forms by $\CH_{\dl}^k(M)$.
\label{dl}
\end{dfn}

From Proposition \ref{ddliso}, we know that $H^k_{\dl}(M)$ is finite dimensional.   One may ask whether $\CH^k_{\dl}(M)$ is also finite dimensional. Intuitively, this must be so due to the isomorphism between $H^k_{\dl}(M)$ and $H^{2n-k}_{d}(M)$.  However, it will be more rewarding to address the question directly by calculating the symbol of the $\dl$ Laplacian.
\begin{prop}$\Delta_{\dl}$ is an ellipitic differential operator.
\end{prop}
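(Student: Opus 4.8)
The plan is to verify ellipticity by computing the principal symbol $\sigma_\xi(\Delta_{\dl})$ and showing that it is invertible for every nonzero covector $\xi$. Since $\dl=d\La-\La d$ with $\La$ of order zero, $\dl$ is a first-order operator; hence $\Delta_{\dl}=\dls\dl+\dl\dls$ is second order, and its principal symbol factors as $\sigma_\xi(\Delta_{\dl})=\sigma_\xi(\dls)\,\sigma_\xi(\dl)+\sigma_\xi(\dl)\,\sigma_\xi(\dls)$. The whole computation thus reduces to identifying the single first-order symbol $\sigma_\xi(\dl)$.

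To get $\sigma_\xi(\dl)$ I would use the description $\dl=\dcs=-*\dc *$ from Lemma~\ref{dldcs}, with $\dc=\CJ^{-1}d\,\CJ$. Because $*$ and $\CJ$ are algebraic (order zero) while $\sigma_\xi(d)$ is wedging $\xi\w(\cdot)$, conjugation by $\CJ$ converts this into wedging by $J\xi$ (the covector obtained by applying the almost complex structure), so $\sigma_\xi(\dc)=\pm\,(J\xi)\w(\cdot)$. The outer $-*(\cdots)*$ then turns wedging into contraction via the standard Hodge duality $*\,(\eta\w \cdot)\,*=\pm\,\iota_{\eta^\sharp}$, giving $\sigma_\xi(\dl)=\pm\,\iota_{(J\xi)^\sharp}$, interior multiplication by the vector dual to $J\xi$. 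Dually, $\sigma_\xi(\dls)=*\,\sigma_\xi(\dl)\,*=\pm\,(J\xi)\w(\cdot)$, wedging by $J\xi$.

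Substituting into the symbol of the Laplacian, the two terms assemble into the anticommutator of exterior and interior multiplication by $J\xi$, and the Cartan identity $(\eta\w\cdot)\,\iota_{\eta^\sharp}+\iota_{\eta^\sharp}(\eta\w\cdot)=|\eta|^2\,\mathrm{Id}$ collapses it to $\sigma_\xi(\Delta_{\dl})=|J\xi|^2\,\mathrm{Id}=|\xi|^2\,\mathrm{Id}$, using that $J$ is a $g$-isometry. This is invertible whenever $\xi\neq 0$, which is exactly ellipticity. Equivalently and more slickly, since $\dls=(\dl)^*=\dc$ and $\CJ$ is pointwise unitary, one has $\Delta_{\dl}=\dc\,\dcs+\dcs\,\dc=\CJ^{-1}(d\,\ds+\ds\,d)\CJ=\CJ^{-1}\Delta_d\,\CJ$, so $\Delta_{\dl}$ is conjugate to the Hodge--de Rham Laplacian by a bundle automorphism and is therefore elliptic.

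I expect the main obstacle to be purely bookkeeping: pinning down the signs and the factors of $\sqrt{-1}$ so that the two cross terms really combine with a positive overall coefficient rather than cancelling or producing $-|\xi|^2$. The conceptual content---that $\sigma_\xi(\dl)$ is, up to the $J$-twist, the same contraction symbol that makes the ordinary codifferential elliptic---is immediate; the only care needed is in tracking the conventions for $*$, the identity $\CJ^{-2}=(-1)^k$ on $k$-forms, and the symbol of the formal adjoint.
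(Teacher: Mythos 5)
Your proposal is correct, and while it proves the same symbol identity $\sigma_\xi(\Delta_{\dl})=\sigma_\xi(\Delta_d)$ that the paper establishes, it gets there by a genuinely different route. The paper works in a local unitary coframe, discards lower-order (connection/torsion) terms, and invokes the flat K\"ahler identities to reduce $d\simeq \pa+\bpa$ and $\dl=\dcs\simeq \sqrt{-1}\,(\pas-\bpas)$, concluding $\Delta_{\dl}\simeq \pa\pas+\pas\pa+\bpa\bpas+\bpas\bpa\simeq\Delta_d$; your first argument is an invariant, frame-free version of this, pinning down $\sigma_\xi(\dl)=\pm\,\iota_{(J\xi)^\sharp}$ and closing with the wedge--contraction anticommutator and $|J\xi|=|\xi|$ from compatibility of the triple. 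Your second argument is stronger than anything in the paper's proof: since $\CJ$ is a pointwise-orthogonal bundle automorphism (the $(p,q)$-decomposition is $g$-orthogonal and the eigenvalues $(\sqrt{-1})^{p-q}$ are unimodular), one has $\dl=\dcs=\CJ^{-1}\ds\,\CJ$ and $\dls=\dc=\CJ^{-1}d\,\CJ$, whence the \emph{exact} operator identity $\Delta_{\dl}=\CJ^{-1}\Delta_d\,\CJ$, not merely a symbol equivalence. This gives ellipticity instantly and, as a bonus, the on-the-nose identification $\CH^k_{\dl}=\CJ^{-1}\CH^k_d$, so $\dim\CH^k_{\dl}=b_k$ without invoking Proposition \ref{ddliso} (consistent with the paper's $\dim\CH^k_{\dl}=b_{2n-k}$ by Poincar\'e duality). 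What the paper's frame/K\"ahler-identity technique buys instead is reusability: the same symbol calculus is applied verbatim to the fourth-order operators $D_{d+\dl}$ and $D_{d\dl}$ later in Section 3, where no clean conjugation identity is available; your conjugation trick is special to $\Delta_{\dl}$. The only care your write-up correctly flags---signs and factors of $\sqrt{-1}$ in $\sigma_\xi(\dc)=\pm(J\xi)\w(\cdot)$---is harmless, since the two cross terms carry the same sign and their product is positive regardless of convention.
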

\begin{proof}  To calculate the symbol of $\Delta_{\dl}$, we will work in a local unitary frame of $T^*M$ and choose a basis $\{\th^{1},\ldots, \th^{n}\}$ such that the metric is written as 
\bes
g= \th^i \otimes \bth^i + \bth^i \otimes \th^i~,
\ees
with $i=1,\ldots, n$.  
The basis one-forms satisfy the first structure equation, 
but as will be evident shortly, details of the connection one-forms and torsioin two-forms will not be relevant to the proof.  
With an almost complex structure $J$, any $k$-form can be decomposed into a sum of $(p,q)$-forms with $p+q=k$.  We can write a $(p,q)$-form in the local moving-frame coordinates
\bes
A_{p,q}= A_{i_1\ldots i_p j_1 \ldots j_q}~ \th^{i_1}\w \ldots \w \th^{i_p} \w  \bth^{j_1}\w \ldots \w \bth^{j_q}~.
\ees
The exterior derivative then acts as
\be\label{dApq}
d A_{p,q} \equiv (\pa A_{p,q})_{p+1,q} + (\bpa A_{p,q})_{p,q+1} + A_{i_1\ldots i_p j_1 \ldots j_q}~ d (\th^{i_1}\w \ldots \w \th^{i_p} \w  \bth^{j_1}\w \ldots \w \bth^{j_q})~,
\ee
where
\begin{align*}
 (\pa A_{p,q})_{p+1,q} &= \pa_{i_{p+1}}A_{i_1\ldots i_p j_1 \ldots j_q} ~ \th^{i_{p+1}}\w\th^{i_1}\w \ldots \w \th^{i_p} \w  \bth^{j_1}\w \ldots \w \bth^{j_q}~,\\
  (\bpa A_{p,q})_{p,q+1} &= \bpa_{j_{q+1}}A_{i_1\ldots i_p j_1 \ldots j_q} ~ \bth^{j_{q+1}}\w\th^{i_1}\w \ldots \w \th^{i_p} \w  \bth^{j_1}\w \ldots \w \bth^{j_q}~.
\end{align*}
In calculating the symbol, we are only interested in the highest-order differential acting on $A_{i_1\ldots i_p j_1 \ldots j_q} $.  Therefore, only the first two terms of (\ref{dApq}) are relevant for the calculation.  In dropping the last term, we are effectively working in $\mathbb{C}^n$ and can make use of all the K\"ahler identities involving derivative operators.  So, effectively, we have (using $\simeq$ to denote equivalence under symbol calculation) 
\bea\label{dreduce}
d&\simeq&\pa + \bpa ~,\notag\\
\dl= d^c{}^*&\simeq&  \sqrt{-1}(\bpa-\pa)^*=\sqrt{-1} (\pas - \bpas)~,
\eea
where we have used the standard convention, $\pas = - * \bpa *$ and $\bpas = - * \pa *$.  We thus have
\bea\label{dlsymbol}
\Delta_{\dl} &= & \dls \dl + \dl \dls = \dc \dcs + \dcs \dc \notag\\
& \simeq & (\pa - \bpa)(\pas - \bpas) + (\pas - \bpas)(\pa - \bpa) \notag\\
& \simeq &  \pa \pas + \pas \pa + \bpa \bpas + \bpas \bpa \notag\\
& \simeq & \Delta_{d}~,
\eea
where $\Delta_{d}=\ds d + d \ds$ is the de Rham Laplacian.  Clearly then, $\Delta_{\dl}$ is also elliptic.
\end{proof}

Applying elliptic theory to the $\dl$ Laplacian then implies the Hodge decomposition
\bes
\CA^k = \CH^k_{\dl} \oplus  \dl \CA^{k+1} \oplus \dls \CA^{k-1}~.
\ees
Moreover, $\ker \dl = \CH_{\dl} \oplus \im \dl$ and $\ker \dls = \CH_{\dl} \oplus \im \dls$.  Therefore, every $H_{\dl}$ cohomology class contains a unique $\dl$ harmonic representative and $\CH^k_{\dl}\cong H^k_{\dl}(M)\,.$  We note that although the explicit forms of the harmonic representative depend on $g$, the dimensions $\dim\CH^k_{\dl}(M)= \dim H^k_{\dl}(M)$ are independent of $g$ (or $J$).  In fact, by Proposition \ref{ddliso}, we have $\dim\CH^k_{\dl} = b_{2n-k}$ where $b_k$ are the $k${th}-Betti number.

As is clear from Proposition \ref{ddliso}, the $H_{\dl}(M)$ cohomology does not lead to new invariants as it is isomorphic to the de Rham cohomology.  We have however demonstrated that with the introduction of a compatible triple $(\om, J, g)$, it is sensible to discuss the Hodge theory of $H_{\dl}(M)$.  In the following, we shall apply methods used here to study new symplectic cohomologies.

\subsection{$d+\dl$ cohomology}

Having considered the cohomology of $d$ and $\dl$ operators separately, let us now consider forms that are closed under both $d$ and $\dl$, that is, $dA_k =  \dl A_k=0$.  These forms were called symplectic harmonic by Brylinski \cite{Brylinski}.  Notice that any form that is $d\dl$-exact, $A_k=d\dl A'_k$ are trivially also $d$- and $\dl$-closed.  This gives a differential complex 
\begin{equation}\label{diffplex}
\xymatrix@1{
\CA^k \; \ar[r]^{d\dl} &\;  \CA^k\;  \ar[r]^-{d + \dl} & ~\CA^{k+1} \oplus \CA^{k-1}~.
}
\end{equation}
Considering the cohomology associated with this complex leads us to introduce 
\be\label{coha}
H^k_{d+\dl}(M) = \frac{\ker (d + \dl) \cap \CA^k(M)}{{\rm im} \ d\dl \cap \CA^k(M)}~.
\ee
Such a cohomology may depend on the symplectic form but is otherwise invariant under a symplectomorphism of $(M,\om)$.  It is also a natural cohomology to define with respect to hamiltonian actions.  By Proposition \ref{Liehdpdl}, the Lie derivative with respect to a hamiltonian vector field of a differential form that is both $d$- and $\dl$-closed is precisely $d\dl$-exact.  Hence, we see that the $H^k_{d+\dl}(M)$ cohomology class is invariant under hamiltonian isotopy.

Of immediate concern is whether this new symplectic cohomolgy is finite dimensional.  We proceed as for the $H_{\dl}(M)$ case by considering the corresponding Laplacian and its ellipiticity.  

From the differential complex, the Laplacian operator associated with the cohomology is  
\be\label{cohah}
\Delta_{d+\dl} = d \dl (d \dl)^* + \la (\ds d + \dls \dl)~.
\ee
where we have inserted an undetermined real constant $\la >0$ which gives the relative weight between the terms.  With the presence of $d\dl$ term in the cohomology, the Laplacian becomes a fourth-order differential operator.   By construction, the Laplacian is self-adjoint, so the requirement
\bes
0=(A, \Delta_{d+\dl}A) = \|(d\dl)^* A\|^2 + \la (\| d A\|^2 + \| \dl A\|^2) ~.
\ees
give us the following definition.

\begin{dfn}A differential form $A\in \CA^*(M)$ is called {\it $d+\dl$-harmonic} if  $\Delta_{d+\dl} A= 0$, or equivalently, 
\be\label{cohahc}
dA =\dl A = 0 ~~\quad {\rm and} \qquad  (d\dl)^* A =0~.
\ee
We denote the space of $d+\dl$-harmonic $k$-forms by $\CH_{d+\dl}^k(M)$.
\label{dpdl}
\end{dfn}

We now show that $H^*_{d+\dl}(M)$ is finite dimensional by analyzing the space of its harmonic forms.
\begin{thm}  Let $M$ be a compact symplectic manifold.  For any compatible triple $(\om, J, g)$, we define the standard inner product on $\CA^k(M)$ with respect to $g$.  Then:
\begin{itemize}
\setlength{\parsep}{0pt}
\setlength{\itemsep}{0pt}
\item[{\rm(i)}]
$\dim \CH_{d+\dl}^k(M) < \infty\,.$
\item[{\rm(ii)}] There is an orthogonal decomposition
\be\label{cohadecomp}
\CA^k = \CH^k_{d+ \dl}  \oplus d \dl \CA^{k} \oplus (\ds \CA^{k+1} + \dls \CA^{k-1})\,.
\ee
\item[{\rm(iii)}] There is a canonical isomorphism: $\CH^k_{d+\dl}(M) \cong H^k_{d+\dl}(M)$\,.
\end{itemize}
\label{dpdlellip}
\end{thm}

\begin{proof}
One can try to prove finiteness by calculating the symbol of $\Delta_{d+\dl}$.  This turns out to be inconclusive as the symbol of $\Delta_{d+\dl}$ is not positive.  However, we can introduce a related fourth-order differential operator that is elliptic.  Consider the self-adjoint differential operator
\be\label{saO}
D_{d+\dl} = (d \dl)(d\dl)^* + (d\dl)^*(d\dl) + d^*\dl \dls d + \dls d d^* \dl +\la( d^* d +  \dls \dl)~,
\ee
with $\la >0$.  Although $D_{d+\dl}$ contains three additional fourth-order differential terms compared with $\Delta_{d+\dl}$, the solution space of $D_{d+\dl} A =0$ is identical to that of $\Delta_{d+\dl}A=0$ in \eqref{cohahc}.  For consider the requirement,
\bes
0=(A, D_{d+\dl}A) = \|(d\dl)^* A\|^2 + \|d\dl A \|^2 + \|\dls d A \|^2 + \| d^* \dl A\|^2+ \la(\| d A\|^2 + \| \dl A\|^2) ~.
\ees
The three additional terms clearly do not give any additional conditions and are automatically zero by the requirement $dA = \dl A =0\,$.  Essentially, the presence of the two second-order differential terms ensures that the solutions space of $\Delta_{d+\dl}\, A =0 $ and $D_{d+\dl}\,A =0 $ match exactly. 

We now show that $D_{d+\dl}$ is elliptic.  The symbol calculation is very similar to that for the $H_{\dl}(M)$ cohomology in \eqref{dlsymbol}.   Keeping only fourth-order differential terms, and again using $\simeq$ to denote equivalence of the symbol of the operators, we find
\begin{eqnarray}\label{saOs}
D_{d+\dl}&\simeq&d \dl \dls \ds + \dls \ds d \dl + \ds \dl \dls d + \dls d \ds \dl\notag \\
&\simeq & d\ds \dl \dls + \ds d \dls \dl + \ds d \dl \dls + d \ds \dls \dl \notag\\
& \simeq & ( \ds d + d \ds ) (\dls \dl + \dl \dls) = \Delta_d \Delta_{\dl} \notag\\
&\simeq & \Delta^2_d~.
\end{eqnarray}
In the above, as we had explained for the calculations for $\Delta_{\dl}$ in \eqref{dlsymbol}, only the highest-order differential needs to be kept for computing the symbol, and so we can freely make use of K\"ahler identities.  And indeed, we used in line two and three the K\"ahler relations (with $\dl = \dcs$ from Lemma \ref{dldcs})
\bes
\dl \ds \simeq - \ds \dl\, and \qquad \dls d \simeq - d \dls~,
\ees
and $\Delta_{\dl}\simeq\Delta_d$ in line four of \eqref{saOs}.  In all, the symbol of $D_{d+\dl}$ is equivalent to that of the square of the de Rham Laplacian operator.  $D_{d+\dl}$ is thus elliptic and hence its solution space, which consists of $\CH^k_{d+\dl}(M)\,$, is finite dimensional.  

With $D_{d+\dl}$ elliptic, assertion (ii) then follows directly by applying elliptic theory.  For (iii), using the decomposition of (ii), we have $\ker (d + \dl)= \CH^{d+\dl} \oplus \,\im d\dl$.  This must be so since if $\ds A_{k+1} + \dls A_{k-1}$ is $d$- and $\dl$-closed, then
\begin{align*}
0&=(A_{k+1}, d(\ds A_{k+1} + \dls A_{k-1})  + (A_{k-1},\dl(\ds A_{k+1} + \dls A_{k-1}))\\
&= (\ds A_{k+1} + \dls A_{k-1},\ds A_{k+1} + \dls A_{k-1})\\
&=\|\ds A_{k+1} + \dls A_{k-1}\|^2~.
\end{align*}
Thus, every cohomology class of $H_{d+\dl}(M)$ contains a unique harmonic representative and $\CH^k_{d+\dl}(M)\cong H^k_{d+\dl}(M)$.  
\end{proof}
\begin{cor} For $(M,\om)$ a compact symplectic manifold, $\dim H^k_{d+\dl}(M) < \infty\,$.
\end{cor}

In short, we have been able to apply Hodge theory to $H_{d+\dl}(M)$ by equating the harmonic solution space with those of $D_{d+\dl}\,$, which we showed is an elliptic operator.  Having demonstrated that $H_{d+\dl}(M)$ is finite dimensional, we shall proceed to consider some of its properties.  

\subsubsection{Lefschetz decomposition and $\dpdl$ primitive cohomology}

Consider the Lefschetz decomposition reviewed in Section 2. It is generated by the $sl(2)$ representations $(L,\La, H)$.  Let us note that the $d+\dl$ Laplacian has the following special property: 
\begin{lem} \label{dpdllac}
$\Delta_{d+\dl}$ commutes with the $sl(2)$ triple $(L,\La, H)$.  
\end{lem}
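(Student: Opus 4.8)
The plan is to exploit the explicit commutation relations already assembled in Lemma~\ref{dddrels} and Lemma~\ref{dssrels}, splitting
$\Delta_{d+\dl} = d\dl\,(d\dl)^* + \la(\ds d + \dls \dl)$
into its fourth-order piece and its second-order piece and treating each separately. Since $H=[\La,L]$, the Jacobi identity shows that any operator commuting with both $L$ and $\La$ commutes automatically with $H$; so it suffices to check the two generators $L$ and $\La$, and I will record the $H$ statement only as a free consequence.

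For the fourth-order term $d\dl\,(d\dl)^*$ the work is immediate: relation \eqref{ddl} gives $[d\dl,L]=[d\dl,\La]=0$, while Lemma~\ref{dssrels} gives $[(d\dl)^*,L]=[(d\dl)^*,\La]=0$. Applying the Leibniz rule $[AB,X]=A[B,X]+[A,X]B$ with $X\in\{L,\La\}$ then annihilates both contributions, so $d\dl\,(d\dl)^*$ commutes with the whole $sl(2)$ triple.

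The only point requiring a short computation is the second-order term $\ds d + \dls \dl$, which is \emph{not} a symmetric Laplacian and hence is not obviously $sl(2)$-invariant; its invariance comes from a cancellation between the two summands. Using the Leibniz rule together with $[d,L]=0$, $[\dl,L]=d$ from Lemma~\ref{dddrels} and $[\ds,L]=-\dls$, $[\dls,L]=0$ from Lemma~\ref{dssrels}, one finds
\bes
[\ds d,L] = -\dls d\,, \qquad [\dls\dl,L]=\dls d\,,
\ees
whose sum vanishes. The analogous computation against $\La$, using $[d,\La]=\dl$, $[\dl,\La]=0$ and $[\ds,\La]=0$, $[\dls,\La]=-\ds$, yields $[\ds d,\La]=\ds\dl$ and $[\dls\dl,\La]=-\ds\dl$, again cancelling. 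Hence $\ds d + \dls\dl$ commutes with $L$ and $\La$, and therefore with $H$.

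Combining the two pieces establishes $[\Delta_{d+\dl},L]=[\Delta_{d+\dl},\La]=[\Delta_{d+\dl},H]=0$. I expect no genuine obstacle; the one thing to get right is the sign bookkeeping in the cross-terms of the second-order piece, where the cancellations $-\dls d + \dls d = 0$ and $\ds\dl - \ds\dl = 0$ are precisely what force the asymmetric combination $\ds d + \dls\dl$ to remain $sl(2)$-invariant even though neither $\ds d$ nor $\dls\dl$ is invariant on its own.
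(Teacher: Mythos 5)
Your proof is correct and takes essentially the same route as the paper: the same Leibniz-rule computation using Lemma~\ref{dddrels} and Lemma~\ref{dssrels}, with the fourth-order piece trivially invariant and the cancellations $[\ds d,L]+[\dls\dl,L]=-\dls d+\dls d=0$ and $[\ds d,\La]+[\dls\dl,\La]=\ds\dl-\ds\dl=0$ handling the second-order piece. The only cosmetic difference is that the paper disposes of $[\Delta_{d+\dl},H]=0$ by noting $\Delta_{d+\dl}$ preserves the degree of forms, whereas you deduce it from the Jacobi identity via $H=[\La,L]$; both are immediate.
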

\begin{proof} Since $\Delta_{d+\dl}: \CA^k(M) \to \CA^k(M)$ preserves the degree of forms, $[\Delta_{d+\dl}, H]=0$ is trivially true.  The commutation relations of $L$ and $\La$ follow from Lemma \ref{dddrels} and Lemma \ref{dssrels} 
\begin{align*}
[\Delta_{d+\dl},L]&=\la[d^* d +  \dls \dl, L] = \la\left([d^*,L] d + \dls [\dl, L]\right) =0 ~,\\
[\Delta_{d+\dl},\La]&=\la[d^* d +  \dls \dl, \La] = \la \left(d^*[d,\La] + [\dls , \La] \dl\right) = 0~,
\end{align*}
having noted that both $d\dl$ and $(d\dl)^*$ commute with $L$ and $\La$. 
\end{proof}

It is worthwhile to point out that in contrast, the de Rham Laplacian and the $\dl$ Laplacian $\Delta_{\dl}$ \eqref{dlLap} do not by themselves commute with either $L$ or $\La$.   In fact, the elliptic operator $D_{d+\dl}$ also does not commute with $L$ and $\La$.  That $\Delta_{d+\dl}$ commute with the $sl(2)$ representation is rather special.  It also immediately implies the following.
\begin{cor}On a symplectic manifold of dimension $2n$, and a compatible triple $(\om, J, g)$, the Lefschetz operator defines an isomorphism
\bes  L^{n-k}:\ \ \CH_{d+\dl}^k \cong \CH_{d+\dl}^{2n-k}\qquad {\rm for~} k \leq n~.
\ees 
\end{cor}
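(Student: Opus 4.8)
The plan is to recognize that this corollary is a purely representation-theoretic consequence of Lemma \ref{dpdllac} combined with the finite-dimensionality established in Theorem \ref{dpdlellip}. First I would assemble the total graded space of harmonic forms $\CH_{d+\dl}^* = \bigoplus_{k=0}^{2n}\CH_{d+\dl}^k$, which is finite-dimensional by Theorem \ref{dpdlellip}(i) applied in each of the finitely many degrees. Since Lemma \ref{dpdllac} shows that $L$, $\La$, and $H$ each commute with $\Delta_{d+\dl}$, each of them maps $\ker\Delta_{d+\dl}$ into itself; hence $\CH_{d+\dl}^*$ is a finite-dimensional module over the $sl(2)$ algebra \eqref{Lalg}.

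Next I would match the degree grading with the weight decomposition of this module. Because $H$ acts on $\CA^k$ as the scalar $(n-k)$, the subspace $\CH_{d+\dl}^k$ is exactly the $H$-eigenspace of eigenvalue $n-k$ inside $\CH_{d+\dl}^*$, and as $k$ runs over $0,\dots,2n$ these eigenvalues run over $n,n-1,\dots,-n$ without repetition. In the conventions of \eqref{Lalg}, $H$ is the Cartan element, $\La$ is the raising operator (since $[H,\La]=2\La$) and $L$ is the lowering operator (since $[H,L]=-2L$), so $L^{n-k}$ carries the weight-$(n-k)$ space to the weight-$\big(-(n-k)\big)$ space, i.e. $L^{n-k}:\CH_{d+\dl}^k \to \CH_{d+\dl}^{2n-k}$.

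Finally I would invoke the standard structure theorem for finite-dimensional $sl(2)$-modules: any such module decomposes into a direct sum of irreducibles, and on each irreducible the $m$-fold lowering operator restricts to an isomorphism of the weight-$m$ space onto the weight-$(-m)$ space for every $m\geq 0$ (both are one-dimensional when $m$ is a weight and both vanish otherwise). Summing over the irreducible summands and taking $m=n-k\geq 0$ yields the desired isomorphism $L^{n-k}:\CH_{d+\dl}^k \cong \CH_{d+\dl}^{2n-k}$. Equivalently, one can simply remark that the Lefschetz decomposition of Section 2 is precisely this $sl(2)$ statement applied to all of $\CA^*$, and it restricts to the $sl(2)$-stable subspace $\CH_{d+\dl}^*$.

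I do not anticipate a serious obstacle here, since the entire weight of the result is carried by Lemma \ref{dpdllac}, whose proof exploits that both $d\dl$ and $(d\dl)^*$ commute with $L$ and $\La$, so that even the second-order piece $\la(\ds d + \dls \dl)$ of $\Delta_{d+\dl}$ commutes with the $sl(2)$ generators. The one point needing care is bookkeeping: confirming that $\CH_{d+\dl}^k$ is genuinely the full weight-$(n-k)$ space of the module (so no harmonic forms of other degrees pollute the eigenspace), and that $L$, rather than $\La$, plays the role of the lowering operator, so that the exponent $n-k$ and the target degree $2n-k$ come out correctly.
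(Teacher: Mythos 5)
Your proposal is correct and follows the same route as the paper: the paper presents this corollary as an immediate consequence of the lemma stating that $\Delta_{d+\dl}$ commutes with the $sl(2)$ triple $(L,\La,H)$, with the standard structure theory of finite-dimensional $sl(2)$-modules (the hard Lefschetz isomorphism between opposite weight spaces) left implicit. You have simply spelled out that implicit step --- identifying $\CH^k_{d+\dl}$ with the weight-$(n-k)$ eigenspace of $H$ and applying the weight-reflection isomorphism $L^{n-k}$ --- and your bookkeeping of the conventions ($L$ lowering, $\La$ raising) is accurate.
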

Proposition \ref{dpdllac} and the isomorphism of $\CH_{d+\dl}(M)$ with $H_{d+\dl}(M)$ points to a Lefschetz decomposition of the $d+\dl$ cohomology.  This can also be seen directly from the definition of the $d+\dl$ cohomology.  Recall from Proposition \ref{dpdlcl} that the condition $\ker d \cap \ker \dl$ is equivalent to all the Lefschetz decomposed forms be $d$-closed.   And Proposition \ref{ddlclex}(ii) implies that the primitive components of $d\dl$-exact forms are also $d\dl$-exact.  This leads us to following primitive cohomology 
\be\label{cohad}
PH^k_{\dpdl}(M) = \frac{\ker d \cap P^k(M)}{\im d\dl \cap P^k(M)}= \frac{\ker d \cap P^k(M)}{d\dl \CB^k(M)}~.
\ee
The second equivalence is non-trivial.  It is the statement that this cohomology can be considered as a cohomology {\it purely} on the space of primitive forms $\CB^*(M)$.  This equivalence holds by the following lemma.
\begin{lem} \label{ddlprim}On a symplectic manifold $(M,\om)$, if $B_k\in \CB^k(M)$ is $d\dl$-exact, then there exists a $B'_k\in \CB^k(M)$ such that $B_k =d\dl B'_k$. 
\end{lem}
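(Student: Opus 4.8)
The plan is to deduce the statement directly from Proposition \ref{ddlclex}, which already converts $d\dl$-exactness into a componentwise condition on the Lefschetz decomposition. Since $B_k$ is assumed $d\dl$-exact, I would first fix some $A_k \in \CA^k(M)$ with $B_k = d\dl A_k$. There is no reason for this $A_k$ to be primitive, and that is exactly the gap the lemma must close. I would then write out the Lefschetz decomposition $A_k = \sum_{r \geq 0} \tfrac{1}{r!}\, L^r C_{k-2r}$ with each $C_{k-2r} \in \CB^{k-2r}(M)$, and single out the top primitive component $C_k \in \CB^k(M)$ (which is present and primitive by construction, as $k \leq n$) as the candidate for the desired $B'_k$.

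Next I would apply Proposition \ref{ddlclex}(ii) to the pair $(B_k, A_k)$, playing the role of $(A_k, A'_k)$ there. The key observation is that $B_k$ is itself primitive, so its own Lefschetz decomposition is trivial: the $r=0$ slot is $B_k$ and every slot $r \geq 1$ is zero. The proposition then tells us that $B_k = d\dl A_k$ holds if and only if, for each $r$, the $r$-th primitive component of $B_k$ equals $d\dl$ applied to the $r$-th primitive component $C_{k-2r}$ of $A_k$. Reading off the $r=0$ equation yields precisely $B_k = d\dl C_k$ with $C_k$ primitive, so setting $B'_k := C_k$ proves the lemma; the remaining equations merely record $d\dl\, C_{k-2r} = 0$ for $r \geq 1$, which we do not need.

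I do not expect a genuine analytic or combinatorial obstacle here: the entire force of the statement is that passing to the top primitive component of an \emph{arbitrary} $d\dl$-primitive of $B_k$ already produces a \emph{primitive} $d\dl$-primitive. The one thing making this work is that $d\dl$ commutes with both $L$ and $\La$ (equation \eqref{ddl}), so it respects the Lefschetz decomposition termwise; this is the substance of Proposition \ref{ddlclex}, which in turn rests on the explicit operator formula \eqref{Bdef} for the primitive components. The only care I would take is to note explicitly that $C_k$, being the degree-$k$ piece of a Lefschetz decomposition, is automatically primitive, so that $B'_k = C_k \in \CB^k(M)$ as required.
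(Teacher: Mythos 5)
Your proof is correct, and it is a genuinely shorter route than the paper's. You obtain the lemma as an immediate corollary of Proposition~\ref{ddlclex}(ii), applied to the pair $(B_k, A_k)$ together with the observation that the Lefschetz components of the primitive form $B_k$ are $(B_k, 0, 0, \ldots)$ by uniqueness of the decomposition; the $r=0$ slot then reads $B_k = d\dl\, C_k$ with $C_k$ primitive, and the slots $r\geq 1$ give the by-product $d\dl\, C_{k-2r}=0$. The paper instead argues by hand: it applies $\La$ to $B_k = d\dl A'_k$, uses $\La B_k = 0$, commutes $\La$ through $d\dl$, and invokes the $sl(2)$ identity $[\La, L^r] = r\, L^{r-1}(H-r+1)$ to rewrite the result as the Lefschetz decomposition of the zero $(k-2)$-form with components $(n-k+s+2)\, d\dl B'_{k-2-2s}$; uniqueness of the decomposition plus positivity of the coefficients (this is where $k\leq n$ enters explicitly) then force $d\dl B'_{k-2-2s}=0$ for all $s\geq 0$, whence $B_k = d\dl B'_k$ for the top component. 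Both arguments ultimately rest on the same fact --- that $d\dl$ commutes with $L$ and $\La$, hence with the projectors $\Phi_{(k,k-2r)}$ of \eqref{Bdef} --- but you consume it pre-packaged in Proposition~\ref{ddlclex}, thereby avoiding both the commutator computation and the coefficient-cancellation step, while the paper's computation buys explicitness about how the lower Lefschetz slots are killed. Your one point of care, that the top slot $C_k$ of the decomposition of $A_k$ exists and is primitive because $k\leq n$ (forced by $B_k$ being primitive), is exactly the right thing to check.
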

\begin{proof} That $B_k$ is $d\dl$-exact means only that $B_k= d\dl A'_k$ for some $A'_k\in \CA^k(M)$.  Lefschetz decomposing $A'_k$ and imposing the primitivity condition give
\begin{align*}
0=\La B_k& = \La\, d\dl A'_k = d\dl \sum_{r\geq 0} \frac{1}{r!} \La\, L^r B'_{k-2r} \\
& = d\dl \sum_{r\geq 1}\frac{1}{r!}L^{r-1}(n-k+r+1) B'_{k-2r} \\
& = \sum_{s\geq 0} \frac{1}{s!} L^s \left[ (n-k+s+2)\, d\dl B'_{k-2-2s}\right] ~,
\end{align*}
where in the second line we have used the identity $[\La, L^r] = r \,L^{r-1}(H-r+1)$ and in the third line have set $s=r-1\,$.  The last line can be interpreted as the Lefschetz decomposition of a $(k-2)$-form $A''_{k-2}$ with primitive components $B''_{k-2-2s}=(n-k+s+2)\,d\dl B'_{k-2-2s}$.  But since $A''_{k-2}=0$ and $(n-k+s+2) > 0$ (since $k\leq n$), the uniqueness of Lefschetz decomposition for the form $A''_{k-2}=0$ implies $d\dl B'_{k-2-2s}=0$ for all $s\geq 0$.  Thus, we find $B_k= d\dl A'_k = d\dl B'_k$ where $B'_k$ is the primitive part of $A'_k = B'_k + L B'_{k-2r} + \ldots.$  
\end{proof}

For this primitive cohomology, we also have primitive harmonic forms that follow directly by imposing $\Delta_{d+\dl} B = 0$.  Because the forms are now primitive, the Laplacian simplifies to
\be\label{cohpa}
\Delta^p_{d+\dl} = d \dl (d \dl)^* + \la\, \ds d~,
\ee  
with $\la >0\,$.  Note that $\Delta^p_{d+\dl}B=0$ implies $\Delta_{d+\dl} B=0$ since for a primitive form, $dB=0$ implies $\dl B=0$ (see Lemma \ref{diffB}).  Thus, we define

\begin{dfn}A differential form $B\in \CB^*(M)$ is called {\it $(d+\dl)$-primitive harmonic} if  $\Delta^p_{d+\dl} B= 0$, or equivalently, 
\be\label{dprimh}
dB =0~ , \qquad  (d\dl)^* B =0~.
\ee
We denote the space of $(d+\dl)$-primitive harmonic $k$-forms by $P\CH_{\dpdl}^k(M)\,$.
\label{dprim}
\end{dfn}

\begin{thm}On a compact symplectic manifold $(M,\om)$ of dimension $2n$, $H^*_{d+\dl}(M)$ satisfies the following properties:
\setlength{\parsep}{0pt}
\begin{itemize}
\setlength{\itemsep}{0pt}
\item[{\rm(i)}] There is a Lefschetz decomposition
\begin{align}
H^k_{d+\dl}(M)& = \bigoplus_r\, L^r\, PH_{\dpdl}^{k-2r}(M)~,\notag\\
\CH^k_{d+\dl}(M)&=\bigoplus_r\, L^r \, P\CH_{\dpdl}^{k-2r}(M)~.\notag
\end{align} 
\item[{\rm(ii)}] Lefschetz property: the Lefschetz operator defines an isomorphism 
\bes
L^{n-k}:\ \ H_{d+\dl}^k(M) \cong H_{d+\dl}^{2n-k}(M) \qquad {\rm for~} k \leq n~.
\ees
\end{itemize}
\label{dpdlLef}
\end{thm}
\begin{proof} Assertion (i) follows from Lefschetz decomposing the forms and applying Proposition \ref{dpdlcl} and Proposition \ref{ddlclex} as discussed above.  (ii) follows from (i) by applying $L^{2n-k}$ to the Lefschetz decomposed form $H_{d+\dl}^k(M)\,$.
\end{proof}

\subsubsection{de Rham cohomology and the $d\dl$-lemma} 

We now explore the relationship between $d+\dl$ cohomology and de Rham cohomology $H_d(M)$.  There is a canonical homomorphism $H^k_{d+\dl}(M) \to H_d^k(M)$.  Trivially, the space of $d$- and $\dl$-closed forms is a subset of $d$-closed forms, and, likewise, the space of $d\dl$-exact forms is a subset of $d$-exact.   However, the mapping between the two cohomologies is neither injective nor subjective.  A trivial class in $H^k_{d+\dl}(M)$ certainly maps to a trivial class in $H_d^k(M)\,$,  but a non-trivial class in $H^k_{d+\dl}(M)$ can be trivial in $H_d^k(M)$.  For instance, a $d$-exact form  can be non-trivial in $H^k_{d+\dl}(M)$, since in general, it may not be also $d\dl$-exact.  A $d$-exact form is only always $d\dl$-exact if the below $d\dl$-lemma holds.

\begin{dfn} {\bf ($d\dl$-lemma)}  Let $A$ be a $d$- and $\dl$-closed differential form.  We say that the $d\dl$-lemma holds if the following properties are equivalent:
\begin{itemize}
\setlength{\itemsep}{0pt}
\item[{\rm(i)}] $A$ is $d$-exact;
\item[{\rm(ii)}] $A$ is $\dl$-exact;
\item[{\rm(iii)}] $A$ is $d\dl$-exact.
\end{itemize}
\label{ddllem}
\end{dfn}

\noindent Without the $d\dl$-lemma, the canonical mapping $H^k_{d+\dl}(M) \to H^k_d(M)$ is generally not injective.  As for surjectivity, a de Rham cohomology class need not have a representative that is also $\dl$-closed.  As mentioned in the Introduction, Matheiu \cite{Mathieu} (see also \cite{Yan}) identified the existence of a $d$- and $\dl$-closed form in every de Rham class with the strong Lefschetz property, which is not satisfied by every symplectic manifold.  Interestingly, as shown by Merkulov \cite{Merkulov} and Guillemin \cite{Guillemin} (see also \cite{Caval}), the existence of the $d\dl$-lemma on a compact symplectic manifold $M$ is equivalent to $M$ having the strong Lefschetz property.  Therefore, this implies:
\begin{prop} On a compact symplectic manifold $(M,\om)$, the $d\dl$-lemma holds, or equivalently, the strong Lefschetz property is satisfied, if and only if the canonical homomorphism $H^k_{d+\dl}(M) \to H_d^k(M)$ is an isomorphism for all $k$.
\label{ddllemprop}
\end{prop}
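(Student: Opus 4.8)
The plan is to show that the two stated conditions are equivalent by examining injectivity and surjectivity of the canonical homomorphism $\iota: H^k_{d+\dl}(M) \to H_d^k(M)$ separately. Throughout I use the cited equivalences---the $d\dl$-lemma (Lemma~\ref{ddllem}) is equivalent to the strong Lefschetz property by Merkulov \cite{Merkulov} and Guillemin \cite{Guillemin}, which in turn (Mathieu \cite{Mathieu}) is equivalent to every de Rham class containing a $d$- and $\dl$-closed representative---so that these may be invoked interchangeably. The map $\iota$ is well-defined since $d\dl A' = d(\dl A')$ makes every $d\dl$-exact form $d$-exact. Its kernel is exactly the set of classes $[A]_{d+\dl}$ whose ($d$- and $\dl$-closed) representative $A$ is $d$-exact. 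Hence injectivity of $\iota$ in degree $k$ is \emph{precisely} the implication (i)$\Rightarrow$(iii) of the $d\dl$-lemma in that degree, the converse (iii)$\Rightarrow$(i) being automatic.

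The reverse direction of the proposition is now almost formal. If $\iota$ is an isomorphism for all $k$, then in particular it is injective for all $k$, which yields (i)$\Leftrightarrow$(iii) of Lemma~\ref{ddllem}. To promote this to the full $d\dl$-lemma I would apply the symplectic star: because $\dl = (-1)^{k+1}\ss d\,\ss$ and $\ss\ss = 1$, the operator $\ss$ sends $d$- and $\dl$-closed forms to $d$- and $\dl$-closed forms and interchanges $d$-exactness with $\dl$-exactness, while $d\dl$-exactness is preserved since $d\dl$ commutes with $L$ and $\La$ and hence with $\ss$. Applying (i)$\Leftrightarrow$(iii) to $\ss A$ therefore gives (ii)$\Leftrightarrow$(iii), establishing the $d\dl$-lemma and thus the strong Lefschetz property.

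For the forward direction I would combine injectivity and surjectivity. Assuming the $d\dl$-lemma, injectivity of $\iota$ for all $k$ follows from (i)$\Rightarrow$(iii) as above. For surjectivity, given a class $[\alpha]_d \in H_d^k(M)$, the strong Lefschetz property, via Mathieu \cite{Mathieu}, guarantees a $d$- and $\dl$-closed representative $A'$ with $[A']_d = [\alpha]_d$, and then $\iota[A']_{d+\dl} = [\alpha]_d$. Hence $\iota$ is an isomorphism for every $k$. I expect surjectivity to be the main obstacle: unlike injectivity and the reverse implication, which are purely formal consequences of the definitions together with $\ss$-duality, it rests on the substantive existence of symplectic harmonic (i.e.\ $d$- and $\dl$-closed) representatives in each de Rham class, which is exactly the analytic content of Mathieu's theorem and is equivalent to strong Lefschetz.
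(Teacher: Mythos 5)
Your proof is correct, and its forward direction coincides with the paper's: injectivity is exactly the implication (i)$\Rightarrow$(iii) of Lemma~\ref{ddllem}, and surjectivity comes from Mathieu's theorem via the existence of a $d$- and $\dl$-closed representative in each de Rham class. Where you genuinely diverge is the converse. The paper extracts strong Lefschetz from \emph{surjectivity} alone: if each de Rham class has a $\dl$-closed representative, Mathieu's biconditional immediately gives strong Lefschetz, with no need to reconstruct the $d\dl$-lemma and no appeal to Merkulov--Guillemin in that direction. You instead extract the full $d\dl$-lemma from \emph{injectivity} alone, conjugating (i)$\Rightarrow$(iii) by the symplectic star: $\ss$ preserves the $d$- and $\dl$-closed condition, exchanges $d$-exactness with $\dl$-exactness, and commutes with $d\dl$ --- a fact you justify through the Lefschetz action, which does work since $d\dl$ preserves each component $\frac{1}{r!}L^rB_s$ and the coefficients in \eqref{ssact} depend only on $(r,s)$, though it is seen more directly from $\dl=(-1)^{k+1}\ss d\,\ss$ and $\ss\ss=1$, which give $\ss\, d\dl\,\ss = d\dl$ --- so injectivity in degree $2n-k$ yields (ii)$\Rightarrow$(iii) in degree $k$, hence the full lemma, and then Merkulov--Guillemin gives strong Lefschetz. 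The two routes buy complementary rigidity statements: the paper's argument shows surjectivity for all $k$ already forces the isomorphism, while yours shows injectivity for all $k$ does the same; the paper's is shorter, yours is more self-contained on the symplectic-linear-algebra side and makes explicit that the kernel of the canonical map measures precisely the failure of (i)$\Rightarrow$(iii).
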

\begin{proof}Assuming first the $d\dl$-lemma.  Injectivity is then assured.  Surjectivity follows from the existence in each de Rham cohomology class of a $d$- and $\dl$-closed representative when the strong Lefschetz property holds (Mathieu's theorem \cite{Mathieu}).  Conversely, using again Mathieu's theorem, if the map is surjective, so that each de Rham class has a representative that is also $\dl$-closed, then the strong Lefschetz holds.   
\end{proof}

If the canonical homomorphism is an isomorphism, then the dimensions of the two cohomologies are equal.  We thus have the following corollary.

\begin{cor}On a compact symplectic manifold $(M,\om)$, if the $d\dl$-lemma holds, or equivalently if the strong Lefschetz property is satisfied, then $\dim H^k_{d+\dl}(M) = \dim H_d^k(M)\,$ for all $k$.
\end{cor}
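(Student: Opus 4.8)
The plan is to read this statement off directly from Proposition \ref{ddllemprop}, which does all the real work. That proposition asserts that, under the hypothesis of the $d\dl$-lemma (equivalently, the strong Lefschetz property), the canonical homomorphism $H^k_{d+\dl}(M) \to H_d^k(M)$ is an isomorphism of vector spaces for every $k$. Since an isomorphism of vector spaces is in particular a linear bijection, it identifies the two spaces and hence forces their dimensions to agree. So the first and only essential step I would take is to invoke Proposition \ref{ddllemprop} to obtain the isomorphism, and then conclude $\dim H^k_{d+\dl}(M) = \dim H_d^k(M)$.

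To make the equality of dimensions a meaningful statement, I would point out that both sides are finite. On the de Rham side this is the standard finite-dimensionality of $H_d^k(M)$ for $M$ compact. On the $H_{d+\dl}$ side this is exactly the content of the corollary following Theorem \ref{dpdlellip}, which gives $\dim H^k_{d+\dl}(M) < \infty$ via the identification $\CH^k_{d+\dl}(M) \cong H^k_{d+\dl}(M)$ and the ellipticity of $D_{d+\dl}$. With both cohomologies known to be finite-dimensional, the isomorphism from Proposition \ref{ddllemprop} yields the desired numerical equality with no further argument.

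There is no genuine obstacle at this stage: all the substance has already been absorbed into the equivalence between the $d\dl$-lemma, strong Lefschetz, and the isomorphism of cohomologies. In particular, the hard inputs are Mathieu's theorem (existence of a $d$- and $\dl$-closed representative in each de Rham class precisely when strong Lefschetz holds) and the Merkulov--Guillemin result (equivalence of the $d\dl$-lemma with strong Lefschetz), both of which feed into the proof of Proposition \ref{ddllemprop} rather than into this corollary. Thus I would keep the argument to a single sentence, citing Proposition \ref{ddllemprop} for the isomorphism and noting that isomorphic finite-dimensional spaces share the same dimension.
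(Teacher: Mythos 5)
Your proposal is correct and matches the paper's own argument exactly: the paper derives this corollary as an immediate consequence of Proposition \ref{ddllemprop}, remarking that an isomorphism of the two cohomologies forces their dimensions to agree. Your additional remark on finite-dimensionality (via the corollary to Theorem \ref{dpdlellip}) is a harmless clarification the paper leaves implicit.
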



\subsection{$d\dl$ cohomology}

The $d+\dl$ cohomology followed from the short differential complex
\begin{equation*}
\xymatrix@1{
\CA^k \; \ar[r]^{d\dl} &\;  \CA^k\;  \ar[r]^-{d + \dl} & ~\CA^{k+1} \oplus \CA^{k-1}~.
}
\end{equation*}
Interestingly, simply reversing the arrows gives another differential complex
\begin{equation*}\label{diffplex2}
\xymatrix@R=0pt{
\CA^{k+1} \ar[dr]^ \dl \\
\oplus & \CA^k \ar[r]^-{d\dl} & \CA^k ~.\\
\CA^{k-1} \ar[ur]_{d} & & 
}
\end{equation*}
This leads us to introduce the $d\dl$ cohomology
\be\label{cohb}
H^k_{d\dl}(M)= \frac{\ker d\dl \cap \CA^k}{(\im d + \im \dl)\cap \CA^k}~.
\ee
As may be expected, this $d\dl$ cohomology is closely related to the $d+\dl$ cohomology.  In fact, they are dual to each other, as we will explain below in Section \ref{ddldual}.  For now, let us proceed to describe some its properties.  

The associated Laplacian operator of the $d\dl$ cohomology is also fourth-order
\be\label{cohbh}
\Delta_{d\dl} = (d \dl)^* d \dl + \la (d\ds + \dl \dls)~
\ee
where $\la >0$.  A harmonic form of this Laplacian satisfies 
\bes
0=(A, \Delta_{d\dl}A) = \|d\dl A\|^2 + \la \left(\| \ds A\|^2 + \| \dls A\|^2\right) ~,
\ees
which leads us to the following definition.

\begin{dfn}A differential form $A\in \CA^*(M)$ is called {\it $d\dl$-harmonic} if  $\Delta_{d\dl} A= 0$, or equivalently, 
\be\label{cohbhc}
d\dl A =0~ , \qquad  \ds A = 0~ ,\qquad \dls A =0~.
\ee
We denote the space of $d\dl$-harmonic $k$-forms by $\CH_{d\dl}^k(M)\,$.
\label{dadl}
\end{dfn}

Similar to $\Delta_{d+\dl}$, $\Delta_{d\dl}$ is also not an elliptic operator.  But as before, we can consider an elliptic operator whose space of solution is identical to that of $\Delta_{d\dl}$.  Let
\be\label{sbO}
D_{d\dl} = (d\dl)^*(d\dl) + (d \dl)(d\dl)^* + d\dls \dl \ds + \dl \ds d \dls +\la( d \ds +  \dl \dls)~.
\ee
Let us show that $D_{d\dl}$ is elliptic using the same method of the previous subsections.  Again labeling symbol equivalence by $\simeq$, we have
\begin{eqnarray*}\label{sbOs}
D_{d\dl}&\simeq&\dls \ds d \dl + d \dl \dls \ds +  d \dls \dl \ds + \dl \ds d \dls \\
&\simeq & d\ds \dl \dls + \ds d \dls \dl + \ds d \dl \dls + d \ds \dls \dl \\
& \simeq & ( \ds d + d \ds ) (\dls \dl + \dl \dls) = \Delta_d \Delta_{\dl} \\
&\simeq & \Delta^2_d~.
\end{eqnarray*}
Then by applying ellliptic theory arguments, we have proved
\begin{thm}  Let $M$ be a compact symplectic manifold.  For any compatible triple $(\om, J, g)$, we define the standard inner product on $\CA^k(M)$ with respect to $g$.  Then:
\begin{itemize}
\setlength{\parsep}{0pt}
\setlength{\itemsep}{0pt}
\item[{\rm(i)}]
$\dim \CH_{d\dl}^k(M) < \infty$~.
\item[{\rm(ii)}] There is an orthogonal decomposition
\be\label{cohbdecomp}
\CA^k = \CH^k_{d\dl}   \oplus (d\CA^{k-1} + \dl \CA^{k+1}) \oplus (d\dl)^* \CA^{k}~.
\ee
\item[{\rm(iii)}] There is a canonical isomorphism: $\CH^k_{d\dl}(M) \cong H^k_{d\dl}(M)$\,.
\end{itemize}
\label{dadlellip}
\end{thm}
Also,
\begin{cor} For $(M,\om)$ a compact symplectic manifold, $\dim H^k_{d+\dl}(M) < \infty\,$.
\end{cor}

\subsubsection{Lefschetz decomposition and the $d\dl$ primitive cohomology}

Like the $d+\dl$ cohomology, the $d\dl$ cohomology exhibits Lefschetz decomposition.  The arguments are very similar to those in section 3.2.1, so we will mainly state results here and focus on the differences.   

The starting point is again to note: 
\begin{lem} \label{ddllac}
$\Delta_{d\dl}$ commutes with the $sl(2)$ triple $(L,\La, H)$.  
\end{lem}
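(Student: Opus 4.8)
The plan is to mirror the proof of Lemma~\ref{dpdllac} for $\Delta_{d+\dl}$. The decisive structural fact is that the fourth-order part of $\Delta_{d\dl}$ is assembled from $d\dl$ and $(d\dl)^*$, each of which commutes with the entire $sl(2)$ triple, so that only the second-order part $\la(d\ds+\dl\dls)$ can possibly contribute to a commutator with $L$ or $\La$. Since $\Delta_{d\dl}:\CA^k(M)\to\CA^k(M)$ preserves the degree of forms, the relation $[\Delta_{d\dl},H]=0$ is immediate, and I would dispose of it first.

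For the fourth-order term I would invoke Lemma~\ref{dddrels}, which gives $[d\dl,L]=[d\dl,\La]=0$, and Lemma~\ref{dssrels}, which gives $[(d\dl)^*,L]=[(d\dl)^*,\La]=0$; together these show $(d\dl)^*d\dl$ commutes with both $L$ and $\La$. It then remains only to verify that the second-order term commutes with $L$ and $\La$. Applying the Leibniz identity $[ab,c]=a[b,c]+[a,c]b$ and reading the individual commutators off Lemmas~\ref{dddrels} and~\ref{dssrels}, I would compute
\begin{align*}
[d\ds,L] &= d[\ds,L]+[d,L]\ds = -\,d\dls\,, & [\dl\dls,L] &= \dl[\dls,L]+[\dl,L]\dls = d\dls\,,\\
[d\ds,\La] &= d[\ds,\La]+[d,\La]\ds = \dl\ds\,, & [\dl\dls,\La] &= \dl[\dls,\La]+[\dl,\La]\dls = -\,\dl\ds\,,
\end{align*}
so that the two summands cancel in each case, yielding $[d\ds+\dl\dls,L]=0$ and $[d\ds+\dl\dls,\La]=0$.

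The only substantive point is recognizing this pairwise cancellation in the second-order piece; beyond that there is no genuine obstacle, since every commutator required is a direct entry in the two summary lemmas. I therefore expect the argument to be a short computation essentially identical in form to that of Lemma~\ref{dpdllac}, the sole difference being that the roles of the adjoint and non-adjoint factors are interchanged.
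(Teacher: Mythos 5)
Your proof is correct and is precisely the argument the paper intends: the paper omits the computation for $\Delta_{d\dl}$, remarking only that it is ``very similar'' to the proof of Lemma~\ref{dpdllac}, and your verification---degree preservation for $H$, Lemmas~\ref{dddrels} and~\ref{dssrels} for the fourth-order piece, and the pairwise cancellation $[d\ds,L]=-d\dls=-[\dl\dls,L]$ and $[d\ds,\La]=\dl\ds=-[\dl\dls,\La]$ for the second-order piece---is exactly that argument carried out. No gaps.
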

This implies the following corollary:
\begin{cor}On a symplectic manifold of dimension $2n$, and a compatible triple $(\om, J, g)$, the Lefschetz operator defines an isomorphism
\bes  L^{n-k}:\ \ \CH_{d\dl}^k(M) \cong \CH_{d\dl}^{2n-k}(M)\qquad {\rm for~} k \leq n~.
\ees 
\end{cor}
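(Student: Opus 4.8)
The plan is to deduce the isomorphism purely from the $sl(2)$ representation theory already in place, exactly as one expects for a Hard-Lefschetz-type statement. The two ingredients I would combine are Lemma \ref{ddllac}, which says $\Delta_{d\dl}$ commutes with $L$, $\La$ and $H$, and Theorem \ref{dadlellip}(i), which guarantees that the total space of harmonic forms $\CH_{d\dl}^*(M) = \bigoplus_k \CH_{d\dl}^k(M)$ is finite-dimensional.

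First I would observe that, because $\Delta_{d\dl}$ commutes with each of $L$, $\La$ and $H$, these three operators preserve $\ker \Delta_{d\dl}$. Hence $\CH_{d\dl}^*(M)$ is stable under the $sl(2)$-action \eqref{Lalg} and becomes a finite-dimensional $sl(2)$-module in its own right. The grading by form-degree is precisely the grading by $H$-eigenvalue: since $H = \sum_k (n-k)\,\Pi^k$, the subspace $\CH_{d\dl}^k(M)$ is exactly the $H$-eigenspace of eigenvalue $n-k$ inside $\CH_{d\dl}^*(M)$.

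Next I would invoke the standard structure theory of finite-dimensional representations of $sl(2)$: in any such module, for an integer weight $m \geq 0$ the lowering operator raised to the power $m$ is an isomorphism from the weight-$m$ eigenspace onto the weight-$(-m)$ eigenspace. In our conventions $[H,L]=-2L$, so $L$ lowers the $H$-weight by two and plays the role of this lowering operator, while $\La$ raises it. Taking $m = n-k \geq 0$ (which is the hypothesis $k \leq n$), the weight-$m$ eigenspace is $\CH_{d\dl}^k(M)$ and the weight-$(-m)$ eigenspace is $\CH_{d\dl}^{2n-k}(M)$, because $n-(2n-k) = -(n-k)$. Therefore $L^{n-k}$ restricts to the asserted isomorphism $\CH_{d\dl}^k(M) \cong \CH_{d\dl}^{2n-k}(M)$.

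I do not expect a serious obstacle: the entire content is that the harmonic space is a genuine $sl(2)$-module, and this is ensured precisely because $\Delta_{d\dl}$ commutes with the full triple $(L,\La,H)$ rather than with $L$ alone. The one point that deserves emphasis is that this commutation is special—as the surrounding text notes, neither the de Rham Laplacian nor $\Delta_{\dl}$, nor even the elliptic operator $D_{d\dl}$, commutes with $L$ and $\La$—so the representation-theoretic argument genuinely rests on Lemma \ref{ddllac} and cannot be replaced by a generic Hodge-theoretic input. The argument is identical in form to the one behind the corresponding corollary for $\CH_{d+\dl}$.
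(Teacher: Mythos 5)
Your proof is correct and follows exactly the route the paper intends: the paper derives this corollary as an immediate consequence of Lemma \ref{ddllac} (that $\Delta_{d\dl}$ commutes with the $sl(2)$ triple), and your argument simply makes explicit the standard representation-theoretic step --- the harmonic space is a finite-dimensional $sl(2)$-module graded by $H$-weight $n-k$, so $L^{n-k}$ maps the weight-$(n-k)$ space isomorphically onto the weight-$(k-n)$ space. Your appeal to Theorem \ref{dadlellip}(i) for finite-dimensionality (valid on compact $M$, the paper's standing assumption) correctly supplies the hypothesis needed for the $sl(2)$ structure theory.
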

Hence, this leads to a Lefschetz decomposition of the $d\dl$-harmonic forms and also of $H^*_{d\dl}(M)$ by the canonical isomorphism of Theorem \ref{dadlellip}(iii).  The $d\dl$ primitive cohomology takes the form
\be\label{cohbd}
PH^k_{d\dl}(M) = \frac{\ker d\dl \cap \CB^k(M)}{(\im d + \im \dl) \cap \CB^k(M)}=\frac{\ker d\dl \cap \CB^k(M)}{ (d + L\, H^{-1} \dl) \CB^{k-1} + \dl \CB^{k+1}}~,
\ee
where $H^{-1} = \sum_k \dfrac{1}{n-k}\, \Pi^k$ is the inverse of the degree- counting operator $H$.   Here, we have again written the cohomology as defined solely on the space of primitive forms.  This is possible by the following property of $d+\dl$-exact primitive form.

\begin{lem} \label{dadlprim}On a symplectic manifold $(M,\om)$, if $B_k\in \CB^k(M)$ is $d+\dl$-exact - that is, $B_k = d A'_{k-1} + \dl A''_{k+1}$ - then there exist two primitive forms $\hB'_{k-1}$ and $\hB''_{k+1}$ such that $B_k =(d + L H^{-1} \dl) \hB'_{k-1} + \dl \hB''_{k+1}$. 
\end{lem}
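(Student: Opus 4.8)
The plan is to reduce the statement to an identity among primitive components by Lefschetz-decomposing the two potentials and then reading off the degree-$k$ primitive part of the equation $B_k = d A'_{k-1} + \dl A''_{k+1}$. First I would write
\[
A'_{k-1} = \sum_{r\ge 0}\tfrac{1}{r!}\,L^r B'_{k-1-2r}\,, \qquad A''_{k+1} = \sum_{r}\tfrac{1}{r!}\,L^r B''_{k+1-2r}\,,
\]
with all $B'_{*},B''_{*}\in\CB^*(M)$ primitive. Since $B_k$ is itself primitive, the uniqueness of the Lefschetz decomposition \eqref{Lefdec} forces the degree-$k$ primitive component of the right-hand side to equal $B_k$ exactly. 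Thus the entire proof reduces to computing the degree-$k$ primitive parts of $d A'_{k-1}$ and of $\dl A''_{k+1}$ separately.

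Two computational facts, both drawn from Lemma \ref{diffB}, drive everything. The first is that on $\CB^{k-1}(M)$ the operator $d + L\Hm\dl$ \emph{is} the primitive projection of $d$: writing $dB = B^0_k + L B^1_{k-2}$ as in Lemma \ref{diffB}(i), Lemma \ref{diffB}(ii) gives $\dl B = -(n-k+2)\,B^1_{k-2}$, so $\Hm\dl B = -B^1_{k-2}$ and hence $(d+L\Hm\dl)B = B^0_k$, the primitive part of $dB$. (Here $k\le n$ guarantees $k-1<n$, so we are always in case (i).) The second fact is that $\dl$ sends a primitive $(k+1)$-form to a primitive $k$-form, since $[\dl,\La]=0$ yields $\La\,\dl B = \dl\,\La B = 0$.

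Now I assemble the degree-$k$ primitive parts. By \eqref{dAk} the only term of $d A'_{k-1}$ carrying a degree-$k$ primitive part is the $r=0$ term $dB'_{k-1}$, whose primitive part is $(d+L\Hm\dl)B'_{k-1}$ by the first fact. For $\dl A''_{k+1}$ I use \eqref{dlAk}, equivalently the relation $\dl L^r = L^r\dl + r L^{r-1}d$ that follows from $[\dl,L]=d$: its degree-$k$ primitive part receives \emph{two} contributions, namely $\dl B''_{k+1}$ from the top primitive component (primitive by the second fact) and the primitive part of $dB''_{k-1}$ from the primitive $(k-1)$-component via the $rL^{r-1}d$ term, which equals $(d+L\Hm\dl)B''_{k-1}$. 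Collecting these gives
\[
B_k = (d+L\Hm\dl)\big(B'_{k-1}+B''_{k-1}\big) + \dl B''_{k+1}\,,
\]
so setting $\hB'_{k-1} = B'_{k-1}+B''_{k-1}$ and $\hB''_{k+1} = B''_{k+1}$, both primitive, yields the claim.

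The main obstacle, and the one place the bookkeeping must be done with care, is the observation that $\dl A''_{k+1}$ contributes a genuine $d$-type term to the degree-$k$ primitive part. Because $[\dl,L]=d$, applying $\dl$ to the $L$-tower of $A''_{k+1}$ produces $d$ acting on the \emph{lower} primitive component $B''_{k-1}$; this is precisely why the single potential $\hB'_{k-1}$ must be the sum $B'_{k-1}+B''_{k-1}$ rather than $B'_{k-1}$ alone, and it also explains why the operator appearing in the statement is $d + L\Hm\dl$ rather than just $d$. A minor edge case is $k=n$, where $A''_{n+1}$ has no primitive component of degree $n+1$; then one simply takes $\hB''_{k+1}=0$, and the rest of the argument is unchanged since $k\le n$ ensures $B'_{k-1}$ and $B''_{k-1}$ are genuinely primitive.
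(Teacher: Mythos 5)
Your proof is correct and follows essentially the same route as the paper's: Lefschetz-decompose the two potentials, extract the degree-$k$ primitive component using Lemma \ref{diffB} and the relation $[\dl,L]=d$, and set $\hB'_{k-1}=B'_{k-1}+B''_{k-1}$, $\hB''_{k+1}=B''_{k+1}$. The one presentational difference is that you derive explicitly, with the correct constants, that $(d+L\Hm\dl)$ acts on primitive forms as the primitive projection of $d$ (i.e. $B_s\mapsto B^0_{s+1}$), a fact the paper relegates to a footnote; your handling of the $k=n$ edge case also matches the paper's remark that $B''_{k+1}=0$ there.
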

\begin{proof}Since $B_k$ is primitive, $k\leq n$.  Lefschetz decomposing $A'_{k-1}$ and $A''_{k+1}$, we have
\bes
A'_{k-1} = B'_{k-1} + \ldots  \ , \qquad A''_{k+1}= B''_{k+1} + L B''_{k-1} + \ldots~,
\ees
with $B''_{k+1} =0$ if $k=n\,$.  In the above, we have only written out the first few terms of the decomposition; the other terms will not play a role.  Now 
\begin{align*}
d \, A'_{k-1} &= (B'^0_{k} + L B'^1_{k-2}) + \ldots ~, \\
\dl\, A''_{k+1} & = -H B''^1_{k} + (B''^0_k + L B''^1_{k-2} - L H B''^1_{k-2})+ \ldots~, 
\end{align*}
where superscripted primitive forms $B^0_{s+1}$ and $B^1_{s-1}$ are the primitive components of $dB_s = B^0_{s+1} + L\, B^1_{s-1}$ as in Lemma \ref{diffB}(i).  To write down the second equation, we have used the relations $[\dl, L]= d$ in Lemma \ref{dddrels} and $\dl B_s = - H B^1_{s-1}$ of Lemma \ref{diffB}(ii).  
Since $dA'_{k-1} + \dl A''_{k+1}$ is primitive, we must have
\bes
B_k = dA'_{k-1} + \dl A''_{k+1} = B'^0_k + B''^0_k - H B''^1_{k} ~,
\ees
with any additional terms on the right with powers of $L$ vanishing.  Let us now define
\bes
\hB'_{k-1}= B'_{k-1} + B''_{k-1}~, \qquad \hB''_{k+1} = B''_{k+1} ~.
\ees
We only need to check
\begin{align*}
(d + L H^{-1} \dl)\hB'_{k-1} + \dl \hB''_{k+1}& = \hB'^0_k - H \hB''^1_{k+1} \\
&= (B'^0_{k} + B''^0_{k}) - H B''^1_k =B_k~,
\end{align*}
noting that in general the differential operator $(d + L H^{-1} \dl) : B_s \to B^0_{s+1}\,$. \footnote{Acting on a primitive form, $(d + L H^{-1} \dl)B_s = (1- L H^{-1} \La) d B_s=B^0_{s+1}\,$.  Hence, this operator consists of the exterior derivative followed by a projection onto the first primitive component of the Lefschetz decomposition.  Properties of this operator will be discussed more fully in \cite{TY}.}   
\end{proof}

Let us now describe the harmonic forms of the $d\dl$ primitive cohomology. We shall impose $\Delta_{d\dl} B = 0$.  But with the forms now primitive, the Laplacian simplifies to
\be\label{cohpb}
\Delta^p_{d\dl} = (d \dl)^* d \dl + \la\, \dl \dls~,
\ee  
with $\la >0\,$.  In the above, we have noted that for primitive forms, $\dls B =0$ implies $\ds B=0$.  This follows from the commutation relation $[\dls, \La] = - \ds$ in Lemma \ref{dssrels}.   Let us define the following:

\begin{dfn}A differential form $B\in \CB^*(M)$ is called {\it $d\dl$-primitive harmonic} if  $\Delta^p_{d\dl} B= 0$, or equivalently, 
\be\label{pcohc}
d\dl B =0\, , \qquad  \dls B =0~.
\ee
We denote the space of $d\dl$-primitive harmonic $k$-forms by $P\CH_{d\dl}^k(M)$.
\label{ddlprimh}
\end{dfn}

We collect the Lefschetz decomposition properties.

\begin{thm}On a compact symplectic manifold $(M,\om)$ of dimension $2n$, $H^*_{d\dl}(M)$ satisfies the following properties:
\setlength{\parsep}{0pt}
\begin{itemize}
\setlength{\itemsep}{0pt}
\item[{\rm(i)}] There is a Lefschetz decomposition
\begin{align}
H^k_{d\dl}(M)& = \bigoplus_r\, L^r\, PH_{d\dl}^{k-2r}(M)~,\notag\\
\CH^k_{d\dl}(M)&=\bigoplus_r\, L^r \, P\CH_{d\dl}^{k-2r}(M)~.\notag
\end{align} 
\item[{\rm(ii)}] Lefschetz property: the Lefschetz operator defines an isomorphism 
\bes
L^{n-k}:\ \ H_{d\dl}^k(M) \cong H_{d\dl}^{2n-k}(M) \qquad {\rm for~} k \leq n~.
\ees
\end{itemize}
\label{ddlLef}
\end{thm}

\subsubsection{Duality with $d+\dl$ cohomology}\label{ddldual}

As is evident, $H^*_{d\dl}$ and $H^*_{d+\dl}$ share many properties.  Indeed, just by comparing the expressions of their respective Laplacians in \eqref{cohah} and \eqref{cohbh}, one finds
\begin{lem} \label{Laprel}
The Laplacians of the $d+\dl$ and $d\dl$ cohomology satisfy 
\bes
* \,\Delta_{d+\dl} = \Delta_{d\dl} *~. 
\ees  
\end{lem}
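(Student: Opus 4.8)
The plan is to prove the identity termwise by showing that $\Delta_{d\dl}$ is the conjugate of $\Delta_{d+\dl}$ by the Hodge star. The only inputs are the adjoint formulas \eqref{dstar}--\eqref{ddlstar}, i.e. $\ds = -*d*$, $\dls = *\dl*$ and $(d\dl)^* = (-1)^{k+1}*d\dl*$ on $\CA^k(M)$, together with $*\,* = (-1)^k$ on a $k$-form (recall $\dim M = 2n$ is even). From these I would first extract ``swap'' relations that push $*$ through each operator while trading it for its Hodge adjoint. For the fourth-order building block these come out sign-coherently and degree-independently: a short computation using $(d\dl)^* = (-1)^{k+1}*d\dl*$ and $*\,*=(-1)^k$ gives, on every $\CA^k(M)$,
\[
*\,(d\dl) = -(d\dl)^*\,*, \qquad *\,(d\dl)^* = -(d\dl)\,*.
\]
For the first-order operators one gets instead degree-graded relations, valid on forms of degree $m$,
\[
*\,d = (-1)^{m+1}\ds\,*,\quad *\,\ds = (-1)^{m}d\,*,\quad *\,\dl = (-1)^{m}\dls\,*,\quad *\,\dls = (-1)^{m+1}\dl\,*.
\]

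Next I would apply these to the two pieces of $\Delta_{d+\dl} = d\dl\,(d\dl)^* + \la(\ds d + \dls\dl)$. Chaining the two $(d\dl)$-swaps, the two minus signs cancel and
\[
*\,d\dl\,(d\dl)^* = -(d\dl)^*\,*\,(d\dl)^* = (d\dl)^*(d\dl)\,*.
\]
For the second-order terms, pushing $*$ through $\ds d$ and through $\dls\dl$ and collecting the graded signs---being careful that the \emph{inner} operator is applied to a form of shifted degree (e.g. in $\ds d$ the operator $\ds$ acts on the $(k{+}1)$-form $d\alpha$)---the two signs picked up in each product are equal and cancel, yielding $*\,\ds d = d\ds\,*$ and $*\,\dls\dl = \dl\dls\,*$. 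Assembling the pieces gives
\[
*\,\Delta_{d+\dl} = \big[(d\dl)^*d\dl + \la(d\ds + \dl\dls)\big]\,* = \Delta_{d\dl}\,*,
\]
which is exactly the assertion.

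The sole delicate point, and the step I expect to demand the most care, is the bookkeeping of the degree-dependent signs produced by the repeated factors $*\,*=(-1)^k$ and by the asymmetry between $\ds = -*d*$ and $\dls = +*\dl*$. One must fix the homogeneous degree of the form on which each composite acts before reading off an exponent; an off-by-one there would destroy the pairwise cancellation. Granting that care, no analytic input is needed beyond the definitions of the Hodge adjoints, and the identity follows purely algebraically.
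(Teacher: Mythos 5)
Your proof is correct, and it is essentially the argument the paper intends: the paper proves this lemma simply ``by comparing the expressions of their respective Laplacians,'' i.e.\ exactly the termwise conjugation by $*$ using the adjoint formulas $\ds=-*d\,*$, $\dls=*\dl\,*$, $(d\dl)^*=(-1)^{k+1}*d\dl\,*$ together with $**=(-1)^k$ that you carry out explicitly. Your sign bookkeeping checks out in every term ($*\,d\dl(d\dl)^*=(d\dl)^*d\dl\,*$, $*\,\ds d=d\ds\,*$, $*\,\dls\dl=\dl\dls\,*$), so you have merely written out in full the verification the paper leaves to the reader.
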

The following proposition then follows straightforwardly. 
\begin{prop}On a symplectic manifold $(M,\om)$, $\CH^{k}_{d+\dl}\,, \CH^{2n-k}_{d+\dl}\, ,\CH^k_{d\dl}\, , \CH^{2n-k}_{d\dl}$ are all isomorphic.  For $k\leq n$, we have the diagram
\bes
\xymatrix@R=40pt@C=60pt{
\CH^k_{d+\dl}(M) \ar
@{<->}
[r]^{*}
\ar @<-1ex>[d]_{L^{n-k}}  
&~\CH^{2n-k}_{d\dl}(M) 
\ar@<-1ex>[d]_{{\La}^{n-k}}
\\
\CH^{2n-k}_{d+\dl}(M) 
\ar @<-1ex>[u]_{{\La}^{n-k}} \ar@{<->}[r]_{*} 
&~ \CH^{k}_{d\dl}(M)
\ar@<-1ex>[u]_{L^{n-k}} 
  }
\ees
\end{prop}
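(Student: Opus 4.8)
The plan is to produce the two kinds of isomorphisms displayed in the diagram separately, and then to verify that the resulting squares commute. The horizontal arrows will come from the Hodge star together with Lemma~\ref{Laprel}, while the vertical arrows will come from the fact that both Laplacians commute with the $sl(2)$ action (Lemmas~\ref{dpdllac} and~\ref{ddllac}) combined with the isomorphisms $L^{n-k}$ and $\La^{n-k}$ supplied by the Lefschetz decomposition of Section~2. Once all the arrows are known to be isomorphisms, the ``all four isomorphic'' statement is immediate, and the content that remains is the commutativity.

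For the horizontal maps I start from Lemma~\ref{Laprel}, $*\,\Delta_{d+\dl}=\Delta_{d\dl}\,*$. If $A\in\CH^k_{d+\dl}$ then $\Delta_{d\dl}(*A)=*(\Delta_{d+\dl}A)=0$, so $*A\in\CH^{2n-k}_{d\dl}$. Rearranging Lemma~\ref{Laprel} gives $\Delta_{d+\dl}\,*^{-1}=*^{-1}\Delta_{d\dl}$, so $*^{-1}$ carries $\CH^{2n-k}_{d\dl}$ back into $\CH^k_{d+\dl}$; since $*$ is invertible on forms, it therefore restricts to an isomorphism $\CH^k_{d+\dl}\cong\CH^{2n-k}_{d\dl}$. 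Replacing $k$ by $2n-k$ produces the bottom arrow $\CH^{2n-k}_{d+\dl}\cong\CH^{k}_{d\dl}$.

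For the vertical maps, Lemma~\ref{dpdllac} says $\Delta_{d+\dl}$ commutes with $L$ and $\La$, so both $L^{n-k}$ and $\La^{n-k}$ preserve $\CH_{d+\dl}$; since $L^{n-k}\colon\CA^k\to\CA^{2n-k}$ and $\La^{n-k}\colon\CA^{2n-k}\to\CA^k$ are isomorphisms for $k\le n$ by the $sl(2)$ representation theory underlying the Lefschetz decomposition of Section~2, they restrict to the isomorphisms $L^{n-k}\colon\CH^k_{d+\dl}\cong\CH^{2n-k}_{d+\dl}$ and $\La^{n-k}$ in the reverse direction; this is the harmonic content of Theorem~\ref{dpdlLef}. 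The same argument with Lemma~\ref{ddllac} and Theorem~\ref{ddlLef} handles the right-hand side for $\CH_{d\dl}$. Together with the horizontal arrows this shows all four spaces are isomorphic.

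It remains to check that the diagram commutes, and this is the step requiring the most care. The key is the intertwining relation $*\,L=\La\,*$ (equivalently $*\,\La=L\,*$), which follows from $\La=(-1)^k*L\,*$ in \eqref{Linv} together with the identity $**=(-1)^k$ on $k$-forms in dimension $2n$: acting on a $k$-form $A$, one finds $\La(*A)=(-1)^k\,*L**A=*LA$, the prefactor and the double star cancelling. Iterating gives $*\,L^{n-k}=\La^{n-k}\,*$, which is exactly the assertion that the square built from the top $*$, the right $\La^{n-k}$, the left $L^{n-k}$, and the bottom $*$ commutes; the companion relation $*\,\La^{n-k}=L^{n-k}\,*$ handles the square built from the reverse arrows. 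The main obstacle here is purely the sign bookkeeping: one must track the degree-dependent signs in $**$ and in \eqref{Linv} carefully enough that the squares commute on the nose rather than merely up to a nonzero scalar, and one must note separately that $\La^{n-k}$ is a genuine isomorphism of harmonic spaces in its own right (it preserves harmonicity and is an $sl(2)$-isomorphism), not merely a one-sided inverse of $L^{n-k}$.
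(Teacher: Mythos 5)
Your proposal is correct and follows essentially the same route as the paper: the horizontal isomorphisms come from $*$ via Lemma~\ref{Laprel}, and the vertical ones from the Laplacians commuting with the $sl(2)$ triple (Lemmas~\ref{dpdllac} and~\ref{ddllac}), which is exactly what the paper invokes when it says the proposition ``follows straightforwardly.'' Your explicit verification of commutativity via $*\,L=\La\,*$ (which indeed checks out against \eqref{Linv} and $**=(-1)^k$ on $k$-forms in even dimension, and is consistent with the Weil relation \eqref{Weil}) simply fills in sign bookkeeping the paper leaves implicit.
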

The uniqueness of the harmonic representative in each cohomology class then imply the following.
\begin{cor}
On a compact symplectic manifold $(M,\om)$, $H^k_{d+\dl}(M)\cong H^{2n-k}_{d\dl}(M)$ and hence $\dim H^k_{d+\dl}(M)=\dim H^{2n-k}_{d\dl}(M)\,$.
\end{cor}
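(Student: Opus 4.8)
The plan is to assemble the corollary from material already in hand: the Hodge-theoretic identification of each cohomology with its space of harmonic forms (Theorem \ref{dpdlellip}(iii) and Theorem \ref{dadlellip}(iii)), together with the intertwining relation $*\,\Delta_{d+\dl}=\Delta_{d\dl}\,*$ of Lemma \ref{Laprel}, which is exactly the content of the horizontal arrow in the preceding proposition. The argument is a composition of isomorphisms, so there is no fresh analytic input beyond those results; the task is to chain them correctly and to confirm that the dimension statement is meaningful.

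First I would record the harmonic-space isomorphism explicitly. The Hodge star $*:\CA^k(M)\to\CA^{2n-k}(M)$ is invertible (on a $k$-form $*\,*=(-1)^k$), and by Lemma \ref{Laprel} it satisfies $\Delta_{d\dl}\circ *=*\circ\Delta_{d+\dl}$. An invertible operator intertwining two Laplacians carries the kernel of one isomorphically onto the kernel of the other: if $\Delta_{d+\dl}A=0$ then $\Delta_{d\dl}(*A)=*\,\Delta_{d+\dl}A=0$, and the reverse inclusion follows by applying $*^{-1}$ to the same relation. Hence $*$ restricts to a linear isomorphism $\CH^k_{d+\dl}(M)\xrightarrow{\sim}\CH^{2n-k}_{d\dl}(M)$, which is precisely the $*$-arrow of the preceding proposition's diagram.

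Next I would descend to cohomology using the canonical identifications. By Theorem \ref{dpdlellip}(iii) each class of $H^k_{d+\dl}(M)$ has a unique harmonic representative, so $H^k_{d+\dl}(M)\cong\CH^k_{d+\dl}(M)$; likewise $H^{2n-k}_{d\dl}(M)\cong\CH^{2n-k}_{d\dl}(M)$ by Theorem \ref{dadlellip}(iii). Composing the three maps gives
\[
H^k_{d+\dl}(M)\;\cong\;\CH^k_{d+\dl}(M)\;\xrightarrow[\ *\ ]{\sim}\;\CH^{2n-k}_{d\dl}(M)\;\cong\;H^{2n-k}_{d\dl}(M),
\]
which is the asserted isomorphism, and the dimension equality $\dim H^k_{d+\dl}(M)=\dim H^{2n-k}_{d\dl}(M)$ follows at once.

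I do not expect a genuine obstacle here, since the difficult analysis (finite-dimensionality and the Hodge decompositions) is already done in Theorems \ref{dpdlellip} and \ref{dadlellip}; the only points needing a moment's care are purely formal. One is that the dimension statement is well posed, which is guaranteed by the finiteness assertions Theorem \ref{dpdlellip}(i) and Theorem \ref{dadlellip}(i). The other is that, although the harmonic representatives depend on the choice of compatible triple $(\om,J,g)$, the composite isomorphism descends to a well-defined map on cohomology and the resulting isomorphism class — in particular the dimension, which is metric-independent — does not depend on that choice. The hardest part, if anything, is thus bookkeeping rather than mathematics, and stating the conclusion at the level of dimensions as in the corollary sidesteps it entirely.
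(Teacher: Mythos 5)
Your proposal is correct and follows essentially the same route as the paper: the paper likewise derives the corollary from the $*$-intertwining of the two Laplacians (Lemma \ref{Laprel}), giving $\CH^k_{d+\dl}(M)\cong\CH^{2n-k}_{d\dl}(M)$, and then invokes the uniqueness of harmonic representatives (Theorems \ref{dpdlellip}(iii) and \ref{dadlellip}(iii)) to descend to cohomology. Your extra remarks on well-posedness of the dimension count and metric-independence are sound but not needed beyond what the paper already records.
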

Being isomorphic, the cohomologies, $H^*_{d+ \dl}(M)$ and $H^*_{d\dl}(M)\,,$ are also naturally paired.
\begin{prop}On a compact symplectic manifold $(M,\om)$, the natural pairing
\bes
H^k_{d+\dl}(M) \otimes H^{2n-k}_{d\dl}(M) \longrightarrow  \mathbb{R}~,
\ees
defined by 
\bes
A \otimes A' \longrightarrow \int_M A \w A'~,
\ees
is non-degenerate, that is, a perfect pairing.
\end{prop}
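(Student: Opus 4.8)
The plan is to first check that the integrand descends to a well-defined bilinear map on cohomology, and then to read off non-degeneracy from the harmonic theory together with the star isomorphism $\CH^k_{d+\dl}(M) \cong \CH^{2n-k}_{d\dl}(M)$ established in the preceding proposition.

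For well-definedness I would fix a $(d+\dl)$-closed form $A \in \CA^k$, so that $dA = \dl A = 0$ since $\ker(d+\dl) = \ker d \cap \ker\dl$, together with a $d\dl$-closed form $A' \in \CA^{2n-k}$, and verify that $\int_M A \w A'$ is unchanged under $A \mapsto A + d\dl\beta$ and under $A' \mapsto A' + d\gamma + \dl\eta$. The contributions involving $d$ are handled by Stokes' theorem: $\int_M A \w d\gamma = \pm \int_M dA \w \gamma = 0$, and one Stokes step gives $\int_M d\dl\beta \w A' = \pm\int_M \dl\beta \w dA'$. The remaining ingredient is an integration-by-parts rule for $\dl$ under the wedge pairing, namely $\int_M \dl\alpha \w \mu = \pm \int_M \alpha \w \dl\mu$ for forms of complementary degree.

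To establish that rule I would note that, with respect to the pairing $\langle \alpha, \mu\rangle = \int_M \alpha \w \mu$, each of $d$, $L$ and $\ss$ is self-adjoint up to sign: for $d$ this is Stokes, for $L$ it is the commutativity of $\w$ with $\om$, and for $\ss$ it is the symmetry of the symplectic pairing $\int_M \alpha \w \ss\mu$. Since $\La = \ss L\, \ss$ and $\dl = d\La - \La d$, it follows that $\La$, and hence $\dl$, are self-adjoint up to sign under $\langle\,,\,\rangle$; equivalently one may convert $\langle\,,\,\rangle$ to the Hodge inner product and invoke $\dls = *\dl*$. Granting this, $\int_M A \w \dl\eta = \pm\int_M \dl A \w \eta = 0$, while $\int_M \dl\beta \w dA' = \pm\int_M \beta \w \dl dA' = \mp\int_M \beta \w d\dl A' = 0$ because $A'$ is $d\dl$-closed. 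This removes both ambiguities and shows the pairing is well-defined.

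With well-definedness in hand, non-degeneracy is nearly immediate. Given a nonzero class in $H^k_{d+\dl}(M)$, represent it by its unique harmonic form $A \in \CH^k_{d+\dl}(M)$ from Theorem \ref{dpdlellip}(iii). Then $*A \in \CH^{2n-k}_{d\dl}(M)$ by the star isomorphism, so it represents a class in $H^{2n-k}_{d\dl}(M)$, and
\[
\int_M A \w *A = (A,A) = \|A\|^2 > 0,
\]
so $A$ pairs nontrivially and the left kernel is zero. Since $\dim H^k_{d+\dl}(M) = \dim H^{2n-k}_{d\dl}(M) < \infty$, a pairing of finite-dimensional spaces of equal dimension with trivial left kernel has trivial right kernel as well, so the pairing is non-degenerate; alternatively one repeats the argument symmetrically, pairing a nonzero harmonic $A' \in \CH^{2n-k}_{d\dl}(M)$ with $*A'$. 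I expect the well-definedness step to be the only real obstacle — in particular fixing the correct degree-dependent signs in the integration-by-parts rule for $\dl$ — whereas the non-degeneracy follows essentially for free from the positivity $\int_M A \w *A = \|A\|^2$ and the $*$-isomorphism onto the $d\dl$-harmonic space.
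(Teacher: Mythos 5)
Your proposal is correct and takes essentially the same route as the paper: represent a nonzero class by its unique harmonic form $A \in \CH^k_{d+\dl}(M)$, use $*\,\CH^k_{d+\dl} = \CH^{2n-k}_{d\dl}$ to view $*A$ as a class in $H^{2n-k}_{d\dl}(M)$, and conclude from $\int_M A \w *A = \|A\|^2 > 0$. The only difference is that the paper simply asserts well-definedness of the pairing, whereas you verify it explicitly --- and your integration-by-parts rule $\int_M \dl\alpha \w \mu = \pm \int_M \alpha \w \dl\mu$, justified via the sign-adjointness of $d$, $L$, and $\ss$ under the wedge pairing, is a correct and worthwhile filling-in of that step.
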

\begin{proof} Notice first that the integral is well defined; that is, it is independent of the choice of the representative in either of the two cohomology classes.  To show non-degeneracy, we can then choose $A$ and $A'$ to be the respective harmonic representatives.  In particular, let $A\in \CH^k_{d+\dl}(M)$ and $A'=*A \in \CH^{2n-k}_{d\dl}(M)$.  We thus have for $A\neq 0\,$,
\bes
A\otimes * A \longrightarrow \int_M A \w * A = \| A\|^2  > 0~.
\ees
\end{proof}

\subsection{Example: Kodaira-Thurston nilmanifold}\label{KTsec}

It is helpful to have an explicit example showing clearly the differences between the different cohomologies discussed above.  For this we consider the following Kodaira-Thurston nilmanifold.    

Let $M=M^4$ be the nilmanifold defined by taking $\mathbb{R}^4$ and modding out by the identification $$(\xa, \xb, \xc, \xd) \sim (\xa + a, \xb + b, \xc + c, \xd + d - b \,\xc)~,$$  with $a, b, c, d \in \mathbb{Z}\,$.   The resulting manifold is a torus bundle over a torus (or more specifically here, an $S^1$ bundle over $T^3$) with a basis of cotangent one-forms given by
\be\label{cotvect}
\ea = d\xa\, , \qquad \eb=d\xb\, , \qquad \ec = d\xc\, , \qquad \ed = d\xd + \xb\, d\xc\, .
\ee
We take the symplectic form to be
\be\label{exsymp}
\om = \ea \w \eb + \ec \w \ed~.
\ee
Such a symplectic nilmanifold, discussed by Kodaira \cite{Kodaira} and Thurston \cite{Thurston}, admits a complex structure, though not a K\"ahler stucture since its first Betti number $b_1=3$ is odd.  A compatible almost complex structure can be expressed in terms of a decomposable $(2,0)$-form \footnote{We use the notation $e_{i_1i_2...i_k}= e_{i_1}\w e_{i_2} \w \ldots \w e_{i_k}\,$.}
\be\label{exOm}
\Omega=(\ea + i\, \eb) \w (\ec + i\, \ed) = \left(e_{13} - e_{24}\right) + i \left(e_{23} + e_{14}\right)~.
\ee
However, $\Omega$ is not closed;
\bes\label{exOmd}
d\, {\rm Re}\, \Omega =0 ~ , \qquad  d\, {\rm Im}\, \Omega = - e_{123}~.
\ees
Hence, the almost complex structure is not integrable.

The various symplectic cohomologies can be calculated by writing out explicitly the global differential forms.   The globally defined forms will not depend on the fiber $x_4$ coordinate.    A basis for the various cohomologies of Kodiara-Thurston manifold are given in Table \ref{extable}.

\begin{table}[!h]
{\renewcommand{\arraystretch}{1.25} 
\begin{tabular}{c |c | l | l | l | c }
Cohomology  & $k=0$ &~~$k=1$ & ~~~~~~~~~ $k=2$ & ~~~~~~~~~$k=3$ & $k=4$ \\
\hline
$H^k_d$ &  $1$ & $\ea,\eb,\ec$&$ \om, e_{12}-e_{34}, e_{13},e_{24} $&  $\om \w \ea, \om \w \eb, \om \w \ed$&$ \frac{1}{2}\,\om^2 $ \\
$H^k_{\dl}$ & $1$ & $\ea, \eb,\ed $ &$  \om, e_{12}-e_{34}, e_{13},e_{24}$& $\om \w \ea, \om \w \eb, \om \w \ec $& $ \frac{1}{2}\,\om^2 $\\
$H^k_{d+\dl}$ & $1$ & $ \ea,\eb,\ec$  &$  \om, e_{12}-e_{34}, e_{13},e_{24}, e_{23} $& $\om \w \ea, \om \w \eb, \om \w \ec $&$\frac{1}{2}\,\om^2 $\\
$H^k_{d\dl}$ & $1$ & $\ea, \eb,\ed $  & $  \om, e_{12}-e_{34}, e_{13},e_{24}, e_{14} $ & $\om \w \ea, \om \w \eb, \om \w \ed $&$\frac{1}{2}\,\om^2 $
\end{tabular}}

\ 
 \caption{Bases for $H_d$, $H_{\dl}$, $H_{d+\dl}$ and $H_{d\dl}$ 
of the Kodaira-Thurston fourfold in terms of the one-forms $e_i$ \eqref{cotvect} and symplectic form $\om$ \eqref{exsymp}.\label{extable}}
\

 \end{table}
 
Notice first that for $k$ even, all five cohomologies share at least one element.  This follows from the general fact that powers of the symplectic form, $\om^m$ for $m=0,1,\ldots,n\,$, are always $d$- and $\dl$-closed, and hence, they are non-trivial elements for the five cohomologies.  
 
Let us also point out certain properties of some of the differential $k$-forms on $M\,$.  For $k=1$, $e_3$ is $d$- and $\dl$-closed.  However, it is also $\dl$-exact as $e_3 =\dl e_{14}$.  The form, $e_4$, on the other hand, is $\dl$-closed, but it is not $d$-closed.  For $k=2$, $e_{23}$ is certainly $d$-closed; however, it is also $d$-exact and $\dl$-exact, $e_{23}=de_4 = \dl e_{124}$, but not $d\dl$-exact.  Thus $e_{23}$ is an explicit example showing that the $d\dl$-lemma fails for $M^4$.  (The dual submanifold associated with $e_{23}$ has an interesting property that we discuss in Section 4, Example \ref{currentex}.)  Also noteworthy is $e_{14}$, which is $d\dl$-closed but not closed under either $d$ or $\dl$.
 
With the Kodaira-Thurston nilmanifold not satisfying the $d\dl$-lemma, we also see that the strong Lefschetz property does not hold for the de Rham and $\dl$-cohomology.  However, as required, strong Lefschetz certainly does hold for $H^*_{d+\dl}$ and $H^*_{d\dl}$. 
From Table \ref{extable}, we also see the natural pairing between $H^k_{d+\dl}$ and $H^{4-k}_{d\dl}$ while $H^*_d$ and $H^*_{\dl}$ pair with themselves.  

\

 
\section{Dual currents of submanifolds and primitive cohomology}

A striking feature of the new symplectic cohomologies introduced in the last section is that they all commute with Lefschetz decomposition and hence naturally led us to cohomologies on the space of primitive forms.  One of the two primitive cohomologies, $PH_{\dpdl}(M)$,
 consists of primitive elements that are $d$-closed.  Hence, elements of this primitive cohomology are also elements of the de Rham cohomology.  With de Rham's theorem relating $H_d(M)$ to the homology of singular chains, a natural question is what special subsets of cycles/chains of a smooth compact symplectic manifold $M$ are dual to elements of primitive cohomologies.  In this section, we begin to explore this issue by analyzing the dual currents of the special submanifolds (e.g., coisotropic, isotropic, and symplectic) on a symplectic manifold $M$.\footnote{Recall that a submanifold $X\subset M$ is coisotropic if for every $x\in X$, the symplectic complement of the  vector subspace $T_xX\subset T_xM$ is also in $T_xX\,$, i.e. $(T_xX)^\om \subset T_x X\,$.  Moreover, $X$ is lagrangian if $(T_xX)^\om = T_x X\,$, isotopic if $T_xX \subset (T_xX)^\om$, and symplectic if $T_xX \cap (T_xX)^\om = \{0\}$.}  We will find that the dual currents of lagrangians and coisotropic submanifolds are in fact primitive, and thus, they can be considered as possible dual elements of primitive cohomologies.  We then introduce a homology on the subset of chains that are coisotropic.

Let $X\subset M$ be a compact codimension $m$ submanifold, possibly with boundary.  The dual current associated with $X$ denoted by $\rx$ is defined by
\begin{equation*}
\int_X i^* \a = \int_M \a \w \rx~,
\end{equation*}
where $i:X\hookrightarrow M$ is the embedding map and $\a$ is an arbitrary test $(2n-m)$-form \cite{deRham}.  If $\a$ is taken to be an element of the de Rham cohomology class, then the dual current is just the standard Poincar\'e dual, or equivalently, the Thom current of the normal bundle. 

For the special submanifolds of interest on $M$, the dual current can be expressed simply in local coordinates (see, for example, \cite{McDuff}).  In a local tubular neighborhood $U$ of $X\subset M$ (assumed here not to contain any boundary), we can work in the local Darboux coordinates $(p_1,\ldots,p_n,q_1,\ldots,q_n)$ such that the symplectic form takes the standard form 
\bes\label{sympf}
\om= \sum dp_i \w dq_i~,
\ees
and $X$, having codimension $m=m_1 + m_2\,$, is the zero locus of 
\bes\label{zlocus}
p_1=\ldots=p_{m_1}=q_1=q_2=\ldots=q_{m_2}=0~.
\ees
The dual current then has the canonical form 
\bes  
\rx = \delta(p_1,\ldots,p_{m_1},q_1,\ldots,q_{m_2}) \,dp_1\w \ldots \w dp_{m_1}\w dq_1 \w \ldots \w dq_{m_2}~,
\ees
with the $\delta$-function distribution defined as $\delta(f)=f(0)$.  Clearly, for any closed submanifold, we find $d \rx =0\,$.

Bahramgiri \cite{Bahramgiri} has shown that the dual current of a closed submanifold is not only $d$-closed but also $\dl$-closed if and only if $X$ is coisotropic.   A sharper statement is that the $\dl$-closedness property is due to the fact that $\rx$ is primitive when $X$ is coisotropic.  We give the property of the dual current for coisotropic, isotropic, and symplectic submanifolds.
  
\begin{lem}Let $X\subset M$ be an embedded compact submanifold with dual current $\rx$.  Then: 
\begin{itemize}
\setlength{\parsep}{0pt}
\setlength{\itemsep}{0pt}
\item[{\rm(i)}] $\rx$ is primitive if and only if $X$ is coisotropic.
\item[{\rm(ii)}] $\ss\,\rx$ is primitive if and only if $X$ is isotropic.
\item[{\rm(iii)}] If $X$ is symplectic with codimension $m=2l$, then $\La^k\rx \neq 0$ for $k = 1, \ldots, l\,\,$.
\end{itemize}
\end{lem}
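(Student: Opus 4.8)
The plan is to reduce all three claims to the local Darboux normal form for $\rx$ recorded above, $\rx = \delta(\cdots)\,\eta$ with $\eta = dp_1\w\cdots\w dp_{m_1}\w dq_1\w\cdots\w dq_{m_2}$, and to exploit that $\La$ is a purely algebraic (zeroth-order) contraction with $\om^{-1}$, so it never acts on the $\delta$-distribution: $\La^k\rx = \delta(\cdots)\,\La^k\eta$ as currents. Everything then collapses to the linear-algebra question of how $\La$ acts on the monomial $\eta$. Since $\La = \sum_i i_{\pa_{q_i}} i_{\pa_{p_i}}$ in Darboux coordinates, one application of $\La$ deletes exactly one conjugate pair $(dp_i, dq_i)$, and such a pair appears in $\eta$ precisely for the indices $i \le \min(m_1, m_2)$. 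I would first record the clean count $\La^k\eta = 0 \iff k > \min(m_1, m_2)$; the nonvanishing for $k \le \min(m_1,m_2)$ is immediate because the surviving terms are distinct basis monomials and hence cannot cancel.

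Next I would set up the geometric dictionary translating the pair $(m_1, m_2)$ into the type of $X$. Writing $c := \min(m_1,m_2)$ and $b := \max(m_1,m_2)$, a one-line computation in these coordinates gives $(T_xX)^\om = \mathrm{span}\{\pa_{p_i} : i \le m_2\} \oplus \mathrm{span}\{\pa_{q_j} : j \le m_1\}$, so that $\om$ restricted to $(T_xX)^\om$ has rank $2c$ while $\om$ restricted to $T_xX$ has rank $2(n-b)$. Consequently $X$ is coisotropic ($\om|_{(T_xX)^\om}=0$) iff $c = 0$; isotropic ($\om|_{T_xX}=0$) iff $b = n$; and symplectic ($\om|_{T_xX}$ nondegenerate, of rank $\dim T_xX = 2n-m$) iff $m_1 = m_2$, i.e. $m = 2l$ with $c = b = l$. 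Part (i) then drops out: $\rx$ is primitive iff $\La\rx = 0$ iff $c = 0$ iff $X$ is coisotropic (recovering $\dl\rx = d\,\La\rx = 0$ of \cite{Bahramgiri}). Part (iii) is equally direct: for $X$ symplectic of codimension $2l$ one has $c = l$, hence $\La^k\rx = \delta(\cdots)\,\La^k\eta \neq 0$ for every $1 \le k \le l$.

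For part (ii) I would avoid computing $\ss\eta$ explicitly. As $\rx$ is an $m$-current, $\ss\rx$ has degree $2n-m$, and by the primitivity criterion in Definition~\ref{pdef} a nonzero $(2n-m)$-form can be primitive only when $2n-m \le n$, and then precisely when $L^{\,m-n+1}(\ss\rx) = 0$; this already disposes of the case $m < n$, where $\ss\rx \neq 0$ has degree $> n$ and $X$ cannot be isotropic. Using $\La = \ss L\,\ss$ with $\ss\ss = 1$ yields $L^{\,j}\ss = \ss\,\La^{\,j}$, so $L^{\,m-n+1}\ss\rx = \ss\,\La^{\,m-n+1}\rx$, which (since $\ss$ is invertible) vanishes iff $\La^{\,m-n+1}\rx = 0$, i.e. iff $m-n+1 > c$, i.e. iff $b = n$ -- exactly the isotropy condition from the dictionary. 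The one step needing genuine care is the justification that $X$ admits the stated Darboux normal form in the first place; this rests on $\om|_{T_xX}$ having constant rank, which holds automatically in each of the three cases, so I expect the main obstacle to be bookkeeping (the normal-form reduction and the sign/combinatorial nonvanishing of $\La^k\eta$) rather than anything conceptually deep.
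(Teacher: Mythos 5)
Your proof is correct, but it takes a genuinely different route from the paper's. You work locally: put $X$ in the constant-rank Darboux normal form, observe that $\La$ is purely algebraic and so passes through the $\delta$-distribution, and reduce everything to the clean combinatorial statement $\La^k\rx=0 \iff k>\min(m_1,m_2)$, together with the dictionary $c=\min(m_1,m_2)$, $b=\max(m_1,m_2)$ translating $(m_1,m_2)$ into the coisotropic ($c=0$), isotropic ($b=n$), and symplectic ($m_1=m_2=l$) cases. The paper instead explicitly declines this route (it remarks that one \emph{can} argue via the canonical local coordinates, which is exactly what you do) and works globally from the integral definition of the current: for (i) it pairs $\rx$ against test forms $\om^{n-m+1}\w\b_{m-2}$ and uses the coisotropic condition $\om^{n-m+1}|_X=0$ to conclude $L^{n-m+1}\rx=0$, the second primitivity criterion of Definition \ref{pdef}; for (ii) it pairs against $\om\w\b_{k-2}$ to get $L\,\rho_X=0$, i.e.\ $\La(\ss\rho_X)=0$; and for (iii) it integrates $\om^{n-l}/(n-l)!$ over $X$ and uses $\ss$-identities to get $\La^l\rho_X\neq 0$, deducing the lower powers from $\La^l=\La^{l-k}\La^k$. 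Your approach buys a sharper, uniform local statement (the exact vanishing order of $\La^k\rx$, which gives (iii) for all $k\le l$ in one stroke, and your $L^{\,j}\ss=\ss\,\La^{\,j}$ reduction of (ii) is slick); the paper's approach buys independence from normal forms. That matters for the ``only if'' directions: at a point where $X$ fails to be coisotropic (resp.\ isotropic), $\om|_{T_xX}$ need not have locally constant rank, so your normal form is not available outright there; you should restrict to the open subset of $X$ where the rank is locally maximal (hence locally constant), apply the normal form with $c\ge 1$ (resp.\ $b<n$) on that neighborhood, and pair against a test form supported there. Your closing sentence only certifies constant rank ``in each of the three cases,'' i.e.\ for the forward implications, so this one patch is needed; the paper's integral argument sidesteps it because a single nonvanishing pairing against a suitably supported test form suffices regardless of rank jumps.
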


\begin{proof}One can prove the lemma by making use of the canonical local coordinates described above and applying the defining property of the submanifolds.  The proof we give here follows from the integral definition of the dual current.

(i) Note first that if the submanifold is codimension $m=1$, the statement holds trivially, as $X$ would be automatically coisotropic and the dual 1-current is trivially primitive.  Now in general, a coisotropic submanifold $X^{2n-m}\subset M$ of codimension $m$ satisfies the condition
\bes
\om^{n-m+1}\Big|_{X^{2n-m}} = 0~.
\ees
In the case $m=n$, this condition reduces to the lagrangian condition.  Integrating a test form $\a= \om^{n-m+1}\w \b_{m-2}$ with $\b_{m-2}$ arbitrary over $X^{2n-m}$, we find
\bes
0= \int_{X^{2n-m}} i^*(\om^{n-m+1}\w \b_{m-2}) = \int_M \om^{n-m+1}\w \b_{m-2} \w \rho_X~.
\ees
But since $\b_{m-2}$ is arbitrary, we must have $\om^{n-m+1} \w \rho_X =0\,$, which is precisely the primitive condition (Defintion \ref{pdef}) for the $m$-current $\rho_X$.  Thus, $X$ coisotropic implies $\rx$ primitive.  Conversely, if $X$ is not coisotropic, then there exists a test form $\b_{m-2}$ such that the integral is nonzero.  Then $\rx$ cannot be primitive, as $\om^{n-m+1} \w \rho_X \neq 0\,$.

(ii) Similarly, let $X^k$ be an isotropic submanifold with $1<k\leq n$.  The cases $k=0,1$ are trivially true.  The isotropic condition is that 
\bes 
\om \,\big|_{X^k} =0~.
\ees
Now let the test form $\a= \om \w \b_{k-2}$ with $\b_{k-2}$ arbitrary.  We find
\bes 
0=\int_{X^k} i^*(\om \w \a_{k-2}) = \int_M \om \w \a_{k-2} \w \rho_{X}~,
\ees
which implies $L \,\rho_{X} =0$ or, equivalently, $\La (*_s\, \rho_{X}) =0$.  Therefore,  the $k$-current $*_s \, \rho_{X}$ is primitive or $\rho_{X}= \frac{1}{(n-k)!}\, L^{n-k} \sigma_X$ with $\sigma_X$ being the primitive $k$-current.  The converse statement similar to the proof (i) is straightforward.

(iii) Note that $\La^l \rho_X \neq 0$ implies $\La^k \rho_X\neq 0$ for $k=1,2,\ldots, l-1$.  So we only need to show $\La^l \rho_X \neq 0$ for $X$ a symplectic submanifold.  Such a statement basically follows from an argument due to Bahramgiri \cite{Bahramgiri}, which we reproduce here.    
For a symplectic submanifold $X^{2(n-l)}$, we have
\begin{align*}
0\neq \int_{X^{2(n-l)}} i^* \left(\dfrac{\om^{n-l}}{(n-l)!}\right)&= \int_M \,\dfrac{\om^{n-l}}{(n-l)!} \w \rho_{X} =  \int_M  \left(\ss \dfrac{\om^l}{l!}\right) \w \rho_{X} \\
& =  \int_M \frac{\om^l}{l!} \w \ss \rho_{X} =  \int_M \ss \frac{1}{l!}\,\La^l \, \rho_{X}.
\end{align*}
Hence, we find $\La^l \rho_{X} \neq 0\,$.
\end{proof}

Note that the above lemma holds without regard to whether the submanifold is closed or not.  If $X$ is in fact closed, then as mentioned it is clear that $\rx$ is $d$-closed.

\begin{ex}\label{currentex}
We give some examples of dual currents of closed symplectic and lagrangian submanifolds of the Kodaira-Thurston manifold $M^4$, discussed in Section \ref{KTsec}.  With the symplectic form  $\om = e_{12} + e_{34}$,  submanifolds that wrap around $(x_1, x_2)$ and $(x_3, x_4)$ are symplectic. Their dual currents $\rho_S$ can, for instance, take the form
\bes
(x_1,x_2): \quad \rho_S=\delta(x_3,x_4)\, e_{34}~,\qquad\qquad  (x_3,x_4):\quad \rho_S=\delta(x_1,x_2)\, e_{12}~.
\ees
Clearly $\La \rho_S \neq 0\,$.  Submanifolds that wrap around $(x_1, x_3), (x_2, x_4)$, or $(x_1, x_4)$ are lagrangian.  Note that there is no submanifold wrapping around $(x_2, x_3)$ as the $S^1$ bundle has no zero section.  A representative set of dual currents $\rho_L$ for these lagrangians are  
\begin{align*}
(x_1,x_3):\quad& \rho_L=\delta(x_2,x_3)\, e_{24}~,\\
(x_2,x_4):\quad& \rho_L=\delta(x_1,x_3)\, e_{13}~,\\
(x_1,x_4):\quad& \rho_L=\delta(x_2,x_3)\, e_{23}~.
\end{align*}  
And certainly all $\rho_L$ are primitive.

It is interesting to point out that although $e_{23}$ represents a non-trivial class of $H^2_{d+\dl}(M^4)\,$, it is a trivial element in $H^2_d(M^4)$.  The  submanifold, $L_{14}$, wrapping around $(x_1,x_4)$ is a lagrangian but not strictly a two-cycle.  Further, using the compatible almost complex structure of \eqref{exOm}, we find that ${\rm Re}\,\Omega |_{L_{14}} = 0$ and ${\rm Im}\,\Omega |_{L_{14}} = {\rm vol}(L_{14})$ and so that $L_{14}$ is in fact a special lagrangian submanifold.  However, since $d\, {\rm Im}\,\Omega \neq 0$, $L_{14}$ is actually an example of a generalized calibrated submanifold as discussed in \cite{GutP}      
\end{ex}

\subsection{Homology of coisotropic chains}\label{homology}

Having seen that closed coisotropic submanifold are associated with $d$-closed primitive dual currents, we proceed now to describe how a primitive cohomology can be dual to a homology on coisotropic chains.  

Beginning with the primitive cohomology 
\bes 
PH^k_{\dpdl}(M) = \dfrac{\ker d \cap \CB^k(M)}{d\dl \CB^k(M)}~ ,
\ees
we note that while the exterior derivative $d$ is dual to the boundary operation $\partial$ acting on submanifold, the dual chain operation for the $\dl$ operator is not as clear.  We will sidestep this issue here by introducing the following $d$-primitive cohomology
\begin{equation}\label{ddprim}
PH^k_d(M) = \dfrac{\ker d \cap \CB^k(M)}{d\, \CB'^{k-1}(M)} = \dfrac{\ker d \cap \CB'^k(M)}{d\, \CB'^{k-1}(M)}~,
\end{equation}
where $\CB'^{k}(M)\subset \CB^k(M)$ is the space of $\dl$-closed primitive forms; that is, $B\in \CB'(M)$ if $\La B=0$ and $\dl B =0$.  Such a primitive space has the following desirable property.
\begin{lem} Let $B_k\in \CB^k(M)$.  Then $dB_k$ is primitive if and only if $\dl B_k =0$; that is, $B_k \in \CB'^k(M)$.  In particular, for $k <n, \ d:  \CB'^k(M) \to \CB'^{k+1}(M)$.   \label{dlplem}
\end{lem}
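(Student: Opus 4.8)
The plan is to extract everything from Lemma~\ref{diffB}, which already records the action of $d$ and $\dl$ on a primitive form in Lefschetz-decomposed form; no genuinely new computation is needed.

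First I would recall that by Lemma~\ref{diffB}(i) (and (i') in the boundary case $k=n$) the Lefschetz decomposition of $dB_k$ has at most two primitive pieces,
\[
dB_k = B^0_{k+1} + L\,B^1_{k-1},
\]
with $B^0_{k+1},B^1_{k-1}\in\CB^*(M)$ and $B^0_{k+1}=0$ when $k=n$. Since this is precisely the (unique) Lefschetz decomposition \eqref{Lefdec} of the $(k+1)$-form $dB_k$, uniqueness tells me that $dB_k$ is primitive exactly when the $L$-term drops out, i.e.\ $L\,B^1_{k-1}=0$. As $B^1_{k-1}$ is primitive of degree $k-1<n$, the $sl(2)$-module structure of Section~2 makes $L$ injective on it, so this is equivalent to $B^1_{k-1}=0$.

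Next I would invoke Lemma~\ref{diffB}(ii), namely $\dl B_k=-(n-k+1)\,B^1_{k-1}$. The standing hypothesis $k\le n$ forces the coefficient $(n-k+1)\ge 1$ to be strictly positive, so $\dl B_k=0$ is also equivalent to $B^1_{k-1}=0$. Chaining the two equivalences yields the claim: $dB_k$ primitive $\iff B^1_{k-1}=0 \iff \dl B_k=0$, i.e.\ $B_k\in\CB'^{k}(M)$. For the concluding sentence I would take $B_k\in\CB'^{k}(M)$ with $k<n$: the equivalence just proved gives that $dB_k$ is primitive (and of degree $k+1\le n$), while the anticommutativity $\dl d=-d\dl$ noted in Section~2 gives $\dl(dB_k)=-d(\dl B_k)=0$; hence $dB_k$ is both primitive and $\dl$-closed, i.e.\ $dB_k\in\CB'^{k+1}(M)$.

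I do not expect a genuine obstacle, since the real content is already packaged in Lemma~\ref{diffB}. The only points needing a moment's care are the injectivity of $L$ on $\CB^{k-1}$ and the positivity of $(n-k+1)$ — both resting solely on $k\le n$ — and the degenerate case $k=n$, where ``$dB_n$ primitive'' must be read as ``$dB_n=0$'', because a nonzero form of degree $n+1$ cannot be primitive; the same argument applies verbatim there since $B^0_{n+1}$ is absent.
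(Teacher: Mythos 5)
Your proof is correct and takes essentially the same route as the paper: everything is extracted from Lemma~\ref{diffB} (the decomposition $dB_k = B^0_{k+1}+L\,B^1_{k-1}$ and $\dl B_k = -(n-k+1)B^1_{k-1}$), and the closing claim uses $\dl d = -d\dl$ exactly as the paper does. The only cosmetic difference is the forward direction, where the paper computes in one line $\dl B_k = -\La\, dB_k = 0$ directly from $\La(dB_k)=0$, while you pass through uniqueness of the Lefschetz decomposition and injectivity of $L$ on $\CB^{k-1}$ — slightly longer but equivalent, and your explicit treatment of the degenerate case $k=n$ is a point the paper leaves implicit.
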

\begin{proof}This follows simply from Lemma \ref{diffB}.  If $dB_k$ is primitive, then $0 =-\La dB_k = \dl B_k =0$. Conversely,  assume now $B_k\in \CB'^k(M)$; then $\dl B_k = - H B^1_{k-1}=0$.   Therefore, $dB_k = B^0_{k+1} + L B^1_{k-1}= B^0_{k+1}\,$, which is primitive.  This shows that $d:  \CB'^k(M) \to \CB^{k+1}(M)$.  Continuing, we have $\dl B^0_{k+1} = \dl d B_k = - d \dl B_k =0\,$, which implies in particular $B^0_{k+1}\in \CB'^{k+1}(M)$.    
\end{proof}

With $\dl: \CB^k \to \CB^{k-1}\,$ and ${\dl}^2=0$, it is clear that the image of $d\dl \CB^k$ is contained within $d\,\CB'^{k-1}\,$.  The two images coincide if the $d\dl$-lemma holds.  Therefore, we see that $PH^k_d(M) \subseteq PH^k_{\dpdl}(M)$, and this implies in particular that $PH^k_d(M)$ must also be finite dimensional.
\begin{prop}
On a compact symplectic manifold $(M,\om)$, for $k\leq n$, 
$$\dim PH^k_d(M) \leq  \dim PH^k_{\dpdl}(M) < \infty\,.$$
\end{prop}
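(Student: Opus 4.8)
The plan is to realize both primitive cohomologies as quotients of the \emph{same} space of $d$-closed primitive $k$-forms, compare their coboundary spaces, and then import the finiteness of $PH^k_{\dpdl}(M)$ from the Lefschetz theory of $H^k_{d+\dl}(M)$.

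First I would pin down the inclusion of coboundaries $d\dl\,\CB^k(M)\subseteq d\,\CB'^{k-1}(M)$ already anticipated before the statement. Given $B_k\in\CB^k(M)$, Lemma~\ref{diffB}(ii) gives $\dl B_k=-(n-k+1)\,B^1_{k-1}$, which is primitive; and it is $\dl$-closed because $\dl\dl=0$. Hence $\dl B_k\in\CB'^{k-1}(M)$, and writing $d\dl B_k=d(\dl B_k)$ displays it inside $d\,\CB'^{k-1}(M)$. This is the only genuine computation, and it is immediate from the structure of $\dl$ on primitive forms.

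Next, since $PH^k_{\dpdl}(M)$ and $PH^k_d(M)$ have the common numerator $\ker d\cap\CB^k(M)$ while the coboundary space of the latter contains that of the former, the identity on $\ker d\cap\CB^k(M)$ descends to a surjection
\[
PH^k_{\dpdl}(M)=\frac{\ker d\cap\CB^k}{d\dl\,\CB^k}\longrightarrow\frac{\ker d\cap\CB^k}{d\,\CB'^{k-1}}=PH^k_d(M),
\]
which forces $\dim PH^k_d(M)\le\dim PH^k_{\dpdl}(M)$. I would stress here that $PH^k_d(M)$ is naturally a \emph{quotient} of $PH^k_{\dpdl}(M)$ rather than a subspace, so the loose ``$\subseteq$'' in the preceding discussion should be read as exactly this dimension bound.

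Finally, I would recall that $PH^k_{\dpdl}(M)$ is finite-dimensional: by Theorem~\ref{dpdlLef}(i) it occurs as the $r=0$ summand of the Lefschetz decomposition $H^k_{d+\dl}(M)=\bigoplus_r L^r\,PH^{k-2r}_{\dpdl}(M)$, and $H^k_{d+\dl}(M)$ is finite-dimensional by Theorem~\ref{dpdlellip}. Chaining this with the surjection yields $\dim PH^k_d(M)\le\dim PH^k_{\dpdl}(M)<\infty$. There is no analytic obstacle in any step; the main thing to guard against is a bookkeeping slip, namely that the induced map on cohomology runs \emph{opposite} to the inclusion of coboundary spaces (enlarging the denominator shrinks the quotient), and that finiteness is borrowed from the $d+\dl$ theory rather than re-derived from scratch.
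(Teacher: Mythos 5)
Your proposal is correct and takes essentially the same route as the paper, whose proof is the terse discussion preceding the statement: the containment $d\dl\,\CB^k(M)\subseteq d\,\CB'^{k-1}(M)$ (which you verify via Lemma~\ref{diffB}(ii) and $\dl\dl=0$) compares the two quotients with common numerator $\ker d\cap\CB^k(M)$, and finiteness of $PH^k_{\dpdl}(M)$ is borrowed from $H^k_{d+\dl}(M)$ through the Lefschetz decomposition of Theorem~\ref{dpdlLef}. Your one refinement---reading the paper's loose ``$PH^k_d(M)\subseteq PH^k_{\dpdl}(M)$'' as a surjection of $PH^k_{\dpdl}(M)$ onto the quotient $PH^k_d(M)$, which is what the dimension bound actually requires---is accurate and sharpens the paper's phrasing without changing the argument.
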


\

Let us now consider the cohomology $PH^k_d(M)$ defined on the space of primitive currents instead of forms.  This leads us to a natural dual homology on $(M,\om)\,$.  

Let $\mC_l(M)$ be the space of $l$-chains that are coisotropic.  If the coisotropic chains contain boundaries, we require that their boundaries are also coisotropic.  With $\partial$ denoting the operation of taking the oriented boundary, we introduce a homology on coisotropic chains
\be\label{phol}
PH_{l}(M) = \dfrac{\ker \partial \cap \mC_l(M)}{\partial\, \mC_{l+1}(M)}~,
\ee
for $ n \leq l < 2n\,$.  

The homology $PH_{l}(M)$ can be seen to be naturally dual to $PH^{2n-l}_d(M)$ of \eqref{ddprim}.  The requirement that any boundary of $\mC_{l+1}$ is also coisotropic ensures that $\partial: \mC_{l+1} \to \mC_{l}\,.$  This is precisely dual to the requirement that $\dl \CB' =0$, which ensures $d: \CB'^{2n-l-1} \to \CB'^{2n-l}$.  Explicitly, let $\Ca, \Cb, \Cc \in \mC_*(M)$ and suppose $\Cb = \Ca + \partial \Cc$; that is, the boundaries of $\Cc$ are coisotropic.   Integrating over the test form $\a$, we have
\begin{align*}
0&=\int_{\Cb} i^* \a \  - \int_{\Ca}  i^* \a \  - \int_{\partial \Cc} i^* \a \\
&=\int_M  \a \w \rb \  - \int_M \a \w \ra \  - \int_M d \a \w \rc \\
&= \int_M \a \w \left( \rb - \ra - (-1)^l d\rc\right)~,
\end{align*}
where $\ra, \rb, \rc \in \CB^*(M)$ are the respective dual primitive currents.  Since $\a$ is  arbitrary, we find $\rb- \ra = (-1)^l d\rc\,$; hence, $d\rc$ must also be primitive, and by Lemma \ref{dlplem}, $\dl \rc =0$ and therefore $\rc\in \CB'^{2n-l-1}(M)$.  The converse statement can also be shown straightforwardly, and we see that the dual current is an element of $\CB'(M)$ if and only if the boundary of the coisotropic chain is also coisotropic.

Furthermore, if we consider an infinitesimal symplectomorphism, not necessarily hamiltonian, the homology class of both the closed primitive current and the coisotropic cycle (i.e. a closed coisotropic chain) remains invariant.  For the closed primitive current, since the Lie derivative $\CL_V B=d(i_V B)$ must be primitive, Lemma \ref{dlplem} implies $i_V B \in \CB'(M)$.  For a closed coisotropic cycle of dimension $l$, an infinitesimal symplectomorphism sweeps out a coisotropic chain $C_{l+1}$, of one higher dimension.  That this $C_{l+1}$ is coisotropic can be shown by using local Darboux coordinates in the tubular neighborhood. 

We have shown that $PH^k_d(M)$ is finite dimensional over differential forms by comparison with $PH^k_{\dpdl}(M)$.  Demonstrating the same on the space of currents involves a more direct proof of finite dimensionality of $PH^k_d(M)$ over smooth forms.  This and other properties of primitive cohomologies will be discussed in follow-up papers \cite{TY, TY1}.

\

\section{Discussion}

\def\tM{{N}}

We have introduced new finite-dimensional symplectic cohomologies - $H^*_{d+\dl}(M)$ and $H^*_{d\dl}(M)\,$
- using the differential operators $d+\dl$ and $d\dl$.
\footnote{A third finite-dimensional cohomology $H^k_{\dadl}(M) = \frac{\ker (d+\dl) \cap \CA^k(M)}{({\rm im} \ d + {\rm im} \ \dl) \cap \CA^k(M)}$ originally introduced in the arXiv preprint version of this manuscript will be discussed elsewhere.}  These cohomologies are all isomorphic to the de Rham cohomology when the $d\dl$-lemma holds on $M$.   And conversely, when they differ from the de Rham cohomology, this implies the $d\dl$-lemma and the strong Lefschetz property both fail.  As K\"ahler manifolds satisfy the $d\dl$-lemma, the new cohomologies are particularly suited for distinguishing the more intricate geometries of non-K\"ahler symplectic manifolds.

It is interesting to compare the symplectic cohomologies with the known differential cohomologies on complex manifolds, $\tM$, not necessarily K\"ahler.   Besides the standard ones of deRham and Dolbeault, there are two others that have also been studied: the Bott-Chern cohomology 
\be\label{BC}
H^{p,q}_{BC}(\tM)= \frac{(\ker \pa \cap \ker \bpa) \cap \CA^{p,q}(\tM)}{ \im \pa\bpa \cap \CA^{p-1,q-1}(\tM)}~
\ee
and the Aeppli cohomology \cite{Aeppli}
\be\label{Aeppli}
H^{p,q}_A(\tM) = \frac{\ker \pa\bpa \cap \CA^{p,q}(\tM)}{(\im \pa + \im \bpa)\cap \CA^{p,q}(\tM)}~.
\ee
These two cohomologies are similarly paired and share many analogous properties that we have shown for the pair $H^k_{d+\dl}(M)$ and $H^k_{d\dl}(M)$ defined on symplectic manifold.  Indeed, both can be shown to be finite dimensional by constructing self-adjoint fourth-order differential operators \cite{KS, Bigolin} (see also \cite{Sch}).  Explicitly, the fourth-order operators are  
\be\label{BCO}
D_{BC} = (\pa\bpa)(\bpas\pas) +  (\bpas\pas)(\pa\bpa) + (\pas\bpa)(\bpas\pa) + (\bpas\pa)(\pas\bpa) + \la(\pas\pa + \bpas\bpa)~,
\ee
\be\label{AeppliO}
D_A = (\pa\bpa)^*(\pa\bpa) + (\pa\bpa)(\pa\bpa)^* +  (\pa\bpas)(\bpa\pas)  + (\bpa\pas)(\pa\bpas) + \la( \pa\pas +  \bpa\bpas)~,
\ee 
for real constants $\la>0$.  Indeed, they are analogous to $D_{d+\dl}$ and $D_{d\dl}$ in \eqref{saO} and \eqref{sbO}, respectively.

In the spirit of mirror symmetry - in the most general sense that certain properties of complex geometry are directly associated with certain properties of symplectic geometry - we should identify the symplectic differential pair $(d, \dl)$ with the complex pair $(\pa, \bpa)$.  An immediate question that arises is what is then the dual of the de Rham cohomology ($d=\pa +\bpa$) on a symplectic manifold $M$?  This suggests looking at the cohomology of type
\bes
H(M)=\frac{\ker (d+ \dl)}{\im (d+\dl)}~.
\ees
But notice that $(d+\dl)^2=0$ only if we consider the space of differential forms $\CA^*(M)$ partitioned not into fixed degrees, that is, $\CA^k(M)$, but into the space of even and odd degrees - $\CA^{ev}(M)$  and $\CA^{odd}(M)$.  This would then be a cohomology acting on the formal sums of even or odd differential forms.  Doing so, one can then show by means of a basis transformation (as in \cite[p.\,89]{Caval}) that $H(M)$ above is indeed isomorphic to the de Rham cohomology.

Having noted the similarities of cohomologies in complex and symplectic geometries, it is natural to consider extending the cohomologies to the generalized complex geometries introduced by Hitchin \cite{Hitchin} (see also \cite{Gualtieri, Caval}).  As generalized complex geometry brings together both complex and symplectic structures within one framework, there should certainly be an extension of the new symplectic cohomologies in the generalized complex setting, and it would be interesting to work them out explicitly (see \cite{TY3}).

\

\


\vskip 1cm
Department of Mathematics, University of California, Irvine, CA 92697 USA\\
{\it Email address:}~{\tt lstseng@math.uci.edu}

\vskip .5 cm
\noindent 
{Department of Mathematics, Harvard University\\
Cambridge, MA 02138 USA}\\
{\it Email address:}~{\tt yau@math.harvard.edu}

\end{document}